\DeclareMathOperator{\Res}{Res}
\theoremstyle{plain}
\newtheorem{theorem}{Theorem}[section]
\newtheorem*{theorem*}{Theorem}
\newtheorem{proposition}[theorem]{Proposition}
\newtheorem{lemma}[theorem]{Lemma}
\newtheorem{conjecture}[theorem]{Conjecture}
\theoremstyle{definition}
\newtheorem{definition}[theorem]{Definition}
\newtheorem{remark}[theorem]{Remark}
\newtheorem{example}[theorem]{Example}
\newcommand{\enm}[1]{\ensuremath{#1}}          %
\newcommand{\cal}[1]{\mathcal{#1}}
\newcommand{\NN}{\enm{\mathbb{N}}}
\newcommand{\PP}{\enm{\mathbb{P}}}
\newcommand{\TT}{\enm{\mathbb{T}}}
\newcommand{\KK}{\enm{\mathbb{K}}}
\newcommand{\Cc}{\enm{\cal{C}}}
\newcommand{\Ii}{\enm{\cal{I}}}
\newcommand{\Ll}{\enm{\cal{L}}}
\newcommand{\Oo}{\enm{\cal{O}}}
\renewcommand{\phi}{\varphi}
\renewcommand{\theta}{\vartheta}
\renewcommand{\epsilon}{\varepsilon}
\DeclareMathOperator{\red}{red}
\DeclareMathOperator{\reg}{reg}
\renewcommand{\to}[1][]{\xrightarrow{\ #1\ }}
\newcommand{\old}[1]{}
\title{Minimal Terracini loci in projective space}
\author[E. Ballico]{Edoardo Ballico}\address{Dipartimento Di Matematica,
  Universit\`a di Trento, Via Sommarive 14, 38123, Povo, Trento, Italia}
\email{edoardo.ballico@unitn.it}
\author[M.C. Brambilla]{Maria Chiara Brambilla}\address{
Universit\`a Politecnica delle Marche, via Brecce Bianche, I-60131 Ancona, Italia}
\email{brambilla@dipmat.univpm.it}
\thanks{Partially supported by GNSAGA of INdAM}
\subjclass[2010]{Primary: 14C20; Secondary:14N07} 
\keywords{interpolation problems, minimal Terracini locus, Terracini locus, zero-dimensional schemes}
\begin{document}

\begin{abstract} 
We characterize the number of points for which there exist non-empty Terracini sets of points in $\PP^n$. 
Then we study {\it minimally Terracini} finite sets of points in $\PP^n$ and we obtain a complete description, in the case of $\PP^3$, when the number of points is less than twice the degree of the linear system.
\end{abstract}

\maketitle


\section{Introduction}\label{Sintro}
The notion of {\it Terracini locus} in projective spaces has been recently introduced in 
\cite{BC21} and then
extended to other projective varieties and investigated in 
\cite{bbs,BC2,BV,CG}. 
This property encodes the fact that a set of double points imposes dependent conditions to a linear system, hence it gives information for interpolation problems over double points in special position.

Moreover it can be interpreted in terms of special loci contained in higher secant varieties to projective varieties
as follows.
Recall that
the $k$-th higher secant variety $\sigma_k(X)$ of a
projective variety $X\subset \PP^N$ is the Zariski closure of the
union of all the linear spaces spanned by $k$ independent points
of $X$.
The variety $X$ is called $k$-defective if it has dimension less than the expected one, i.e.\
$\min(N, k \dim(X)+k-1).$
By the famous Terracini lemma \cite{Terracini} a variety is $k$-defective if the tangent spaces to $X$ at $k$ general points 
span a linear space of dimension less than the expected one.
Even when the variety is not $k$-defective, there may be special sets of points such that the span of the tangent spaces drops dimension. We call {\it Terracini} such special sets of points.
For non-defective varieties, we can see the Terracini sets as the points of the abstract secant variety for which
 the
differential of the map to the secant variety is not injective, see e.g.\ \cite{BC21} for more details.

The interest in this subject is also motivated by the connection with the theory of tensors, see e.g.\  \cite{l,BCCGO} for general reference. In particular, since symmetric tensors can be identified with homogeneous polynomials,  the development of geometric methods in projective spaces can give contribution to the study of the rank and decompositions of symmetric tensors.

In this paper we focus on the case of $\PP^n$ and we say that
a finite set of points $S$ of $\PP^n$ is {\it Terracini} with respect to $\Oo_{\PP^n}(d)$ if 
$$h^0(\Ii_{2S}(d)) >0,  \ h^1(\Ii_{2S}(d)) >0,
\mbox{  and }\langle S\rangle =\PP^n.$$
We denote by $\TT(n,d;x)$ the sets of all subsets $S\subset \PP^n$ of cardinality $x$ which are Terracini with respect to $\Oo_{\PP^n}(d)$.

In the language of secant varieties,  the first condition means that the secant variety $\sigma_x(X)\subset \PP^N$ does not fill the ambient space, since $\dim\sigma_x(X)=N-h^0(\mathcal{I}_Z(d))$ (see e.g.\ \cite[Corollary 1]{BCCGO}.
On the other hand, if 
$h^0(\Ii_{2S}(d)) >0$, then
 the number $h^1(\Ii_{2S}(d))$
equals the so-called {\it $x$-defect}, that is  $x(n+1)-\dim\sigma_x(X)-1$ (see Lemma \ref{a7}).

\smallskip

Notice that there are no Terracini sets in $\PP^1$, see Lemma \ref{a2}.
The first result of this paper characterizes the triples $n,d,x$ such that the Terracini locus is non-empty, as follows:

\begin{theorem}\label{ai0}
Fix positive integers $n$, $d$ and $x$.

\quad (i) If either $n=1$ or $d=2$, then $\TT(n,d;x)=\emptyset$ for any $x$.

\quad (ii) $\TT(2,3;x)=\emptyset$ for any $x$.

\quad (iii) 
If $n\ge 2$, $d\ge 3$ and $(n,d)\ne (2,3)$, 
then $\TT(n,d;x)\ne \emptyset$ if and only if $x\ge n+\lceil d/2\rceil$.
\end{theorem}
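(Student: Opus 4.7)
The plan is to dispose of (i) and (ii) by direct inspection, and to split (iii) into existence and non-existence.

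Parts (i) and (ii) reduce to low-dimensional case analyses. For $n=1$, any zero-dimensional subscheme of $\PP^1$ imposes independent conditions on $\Oo_{\PP^1}(d)$ up to length $d+1$, so $h^0(\Ii_{2S}(d))\cdot h^1(\Ii_{2S}(d)) = 0$. For $d=2$, a nonzero quadric $Q$ singular at $S$ has $\op{Sing}(Q)$ a linear subspace containing $\langle S\rangle$; if $\langle S\rangle = \PP^n$, then $Q\equiv 0$, contradiction. For $(n,d) = (2,3)$: an irreducible plane cubic has at most one singular point, so a cubic singular along a spanning set $S$ must decompose; a case analysis (line plus conic, three lines in a triangle, $L^2L'$, triple line) shows that the only possibility with $\langle S\rangle = \PP^2$ is the triangle configuration with $|S|=3$, in which the sequence $0 \to \Ii_{2S}(3) \to \Oo(3) \to \Oo_{2S} \to 0$ gives $h^0 - h^1 = 10-9 = 1$ with $h^0 = 1$, so $h^1 = 0$ and $S$ is not Terracini.

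For existence in (iii), I would realize $x = n + \lceil d/2\rceil$ as follows. Fix a line $L\subset\PP^n$, place $a = \lceil d/2\rceil + 1$ points $S_0$ on $L$, and place $n-1$ further points $S_1$ in general position so that $S = S_0\cup S_1$ spans $\PP^n$. Since $2a > d$, B\'ezout on $L$ forces every degree-$d$ hypersurface $H$ singular at $S_0$ to contain $L$. Writing $H = L\cdot G$ with $\deg G = d-1$, the singularity conditions at $S_0$ (resp.\ $S_1$) translate via $\nabla(LG) = (\nabla L)G + L(\nabla G)$ to $G$ passing through $S_0$ on $L$ (resp.\ being singular at $S_1$). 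A dimension count through the residue sequence of $L$ yields $h^0(\Ii_{2S}(d)) > 0$, and combined with the Euler-characteristic identity $h^0 - h^1 = \binom{n+d}{d} - (n+1)x$ also gives $h^1(\Ii_{2S}(d)) > 0$. Larger $x$ is handled by adding further general points to $S_1$ and verifying persistence of positivity.

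For non-existence, I would induct on $d$, with anchor cases (i) and (ii), using a hyperplane $\Pi\subset\PP^n$ through an appropriate subset of $S$ and the residue-trace sequence
$$0 \to \Ii_{\op{Res}_{\Pi}(2S)}(d-1) \to \Ii_{2S}(d) \to \Ii_{2S\cap\Pi,\,\Pi}(d) \to 0.$$
The trace $2S\cap\Pi$ is a double-point scheme in $\Pi\cong\PP^{n-1}$, while the residue is a mix of simple and double points in $\PP^n$ restricted to $\Oo(d-1)$; when $|S| < n + \lceil d/2\rceil$, the inductive hypothesis should force one of $h^0$ and $h^1$ to vanish. The main technical obstacle is precisely this direction: choosing $\Pi$ so that spanning is preserved, so that both the trace and the residue carry usable information, and reconciling the parity step $\lceil d/2\rceil - \lceil (d-1)/2\rceil \in \{0, 1\}$ with the inductive bookkeeping.
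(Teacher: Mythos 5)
Your parts (i) and (ii) are sound, and your singular-locus argument for $d=2$ is a nice elementary alternative to the paper's appeal to Alexander--Hirschowitz plus Terracini's lemma. The gaps are in (iii), in both directions. For existence, the mechanism you give for $h^0(\Ii_{2S}(d))>0$ does not work when $n\ge 3$: a hypersurface of $\PP^n$ containing a line $L$ does not factor as $L\cdot G$, since $L$ is not a divisor, so the gradient computation has no content there. At the minimal value $x=n+\lceil d/2\rceil$ this is repairable (one checks $\binom{n+d}{n}\ge (n+1)x$ in the stated range, gets $h^1(\Ii_{2S}(d))>0$ from $\deg(2S\cap L)\ge d+2$, and then $h^0=\chi+h^1>0$ --- note this is the opposite logical order from yours). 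The more serious problem is ``adding further general points'' for larger $x$: the theorem asserts non-emptiness for \emph{every} $x\ge n+\lceil d/2\rceil$, and as soon as $\#S_1>\binom{n+d}{n}/(n+1)$ you get $h^0(\Ii_{2S}(d))\le h^0(\Ii_{2S_1}(d))=0$ by Alexander--Hirschowitz, so $S$ is no longer Terracini. The additional points must be confined to the singular locus of a fixed degree-$d$ hypersurface; the paper takes $G=(d-2)H\cup K\cup U$ and places all extra points on a line inside $H\subset\mathrm{Sing}(G)$, which keeps $h^0\ge 1$ for every $x$.

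For non-existence you have a strategy, not a proof, and the obstacle is larger than the parity bookkeeping you flag. After one residual step, $\Res_\Pi(2S)$ is a mixture of simple and double points, so an inductive hypothesis phrased in terms of Terracini loci (unions of double points) does not apply to it; moreover descending in $d$ loses a full unit per step while the bound $\lceil d/2\rceil$ drops only every other step, so the induction on $d$ cannot close as stated. The paper instead inducts on $n$ (Proposition \ref{43}), anchored in the plane by the structural fact (Remark \ref{obs1}, Lemma \ref{43-}) that a curvilinear scheme $Z$ with $\deg Z\le 2d+1$ and $h^1(\Ii_Z(d))>0$ must meet some line in degree $\ge d+2$; combined with concision (Proposition \ref{prepa1}) and a residual with respect to the \emph{double} hyperplane $2H$, used as an arithmetically Cohen--Macaulay divisor, this is what makes the estimates close. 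Some structural input of this kind on low-degree schemes with positive $h^1$ is indispensable, and your sketch does not supply it.
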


In order to make a finer description it is very useful to  study {\it minimally Terracini loci}.
The  {\it minimally Terracini} property has been introduced in \cite[Definition 2.2]{bbs} for any projective variety. 
A Terracini set of points $S\subset \PP^n$  is said to be minimally Terracini with respect to $\Oo_{\PP^n}(d)$ if 
$$h^1(\Ii_{2A}(d)) =0\mbox{ for all }A\subsetneq S.$$
We denote by $\TT(n,d;x)'$ the set of all $S\in \TT(n,d;x)$ which are minimally Terracini with respect to $\Oo_{\PP^n}(d)$.

In Theorem \ref{rob1} we see that if $S\in S(\PP^n,x)$ is minimally Terracini for some $\Oo_{\PP^n}(d)$, then such $d$ is unique and 
it is the maximal integer $t$ such that $h^1(\Ii _{2S}(t)) >0$.

Note that, for fixed $n,d$, we know that $\TT(n,d;x)$ is not empty for infinitely many $x$, by Theorem \ref{ai0}. On the other hand, $\TT(n,d;x)'\subseteq \TT(n,d;x)$
is not empty
only for finitely many $x$, as proved in Proposition \ref{oo1}.
In other words the minimality property is a strong condition which 
allows us to prove interesting bounds and characterizations of the triples $n,d,x$ for which $\TT(n,d;x)'$ is or is not empty.

In Section \ref{RNC} we investigate the sets of points on rational normal curves and on their degenerations (reducible rational normal curves). In particular Theorem \ref{a9.0} and Proposition \ref{de2} completely describe the minimal Terracini sets contained in such curves.
Since rational normal curves contain elements of $\TT(n,d;1+\lceil {nd}/{2}\rceil)'$, we may formulate the following conjecture:
\begin{conjecture} For any
$x\le \left\lfloor  \dfrac{nd+1}{2}\right\rfloor$,  
we have  $\TT(n,d;x)'=\emptyset$.
\end{conjecture}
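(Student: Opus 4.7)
The plan is to argue by contradiction and by induction on the ambient dimension $n\ge 2$. Suppose $S\in\TT(n,d;x)'$ with $x\le\lfloor(nd+1)/2\rfloor$. By Theorem \ref{ai0} it suffices to treat $d\ge 3$ with $(n,d)\ne(2,3)$, and by Proposition \ref{prepa1} (concision) we may assume $\langle S\rangle=\PP^n$, so in particular $x\ge n+1$. The minimality hypothesis gives, for every $p\in S$, $h^1(\Ii_{2(S\setminus\{p\})}(d))=0$; combining this with the short exact sequence
\[
0\to\Ii_{2S}(d)\to\Ii_{2(S\setminus\{p\})\cup\{p\}}(d)\to\Nn_p(d)\to 0,
\]
where $\Nn_p$ is a length-$n$ skyscraper at $p$, constrains $1\le h^1(\Ii_{2S}(d))\le n$ and pins down $h^0(\Ii_{2S}(d))$ in terms of $h^1(\Ii_{2S}(d))$.

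Next, I would select a hyperplane $H\subset\PP^n$ passing through a maximal subset $S_1\subseteq S$, put $S_2=S\setminus S_1$, $k=|S_1|$, and apply the residual (Castelnuovo) exact sequence
\[
0\to\Ii_{S_1\cup 2S_2}(d-1)\to\Ii_{2S}(d)\to\Ii_{2S_1,H}(d)\to 0.
\]
Since $h^1(\Ii_{2S}(d))>0$, at least one of $h^1(\Ii_{2S_1,H}(d))$ and $h^1(\Ii_{S_1\cup 2S_2}(d-1))$ is strictly positive. In the former case, one extracts a minimally Terracini sub-configuration $T\subseteq S_1$ inside $H\cong\PP^{n-1}$ with respect to $\Oo_H(d)$ and invokes the inductive hypothesis to obtain $|T|>\lfloor((n-1)d+1)/2\rfloor$, hence $k\ge 1+\lceil(n-1)d/2\rceil$. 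In the latter case, a symmetric residuation further lowering the degree (or an iteration thereof) would yield an analogous lower bound on $|S_2|$.

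Summing these lower bounds should force $x=k+|S_2|>\lfloor(nd+1)/2\rfloor$, contradicting the hypothesis; the base $n=2$ would be closed by combining Theorem \ref{ai0}(iii) with the explicit rational-normal-curve classification of Theorem \ref{a9.0} and Proposition \ref{de2}. The main obstacle I anticipate is that minimality for $S$ in $\PP^n$ does not descend cleanly under residuation: neither the trace $2S_1$ on $H$ nor the mixed scheme $S_1\cup 2S_2$ is automatically minimally Terracini, so one must carefully peel off an admissible sub-configuration to feed into the inductive hypothesis, and most likely strengthen the inductive statement to accommodate mixed zero-dimensional schemes of simple and double points rather than purely doubled sets.
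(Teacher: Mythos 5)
This statement is stated in the paper as an open \emph{conjecture}: the authors prove it only for $n=2$ (Proposition \ref{n2a1}(a)) and $n=3$ (Theorem \ref{ooo1}), in both cases by taking a critical scheme $Z\subseteq 2S$ of degree $\le 2x\le nd+1$ and invoking a hard classification of low-degree zero-dimensional schemes with $h^1(\Ii_Z(d))>0$ (Remark \ref{n2e1remark}, after Ellia--Peskine, and Proposition \ref{sob1}), which in each case produces a line, conic, or plane cubic carrying a proper subset of $S$ that violates minimality. Your inductive scheme does not close the general case, and the gap is exactly the one you flag at the end. Concretely: (1) the two alternatives coming from the residual sequence of $H$ are not simultaneous, so you cannot ``sum these lower bounds''; if only $h^1(\Ii_{2S_1,H}(d))>0$ occurs, the inductive bound gives $x\ge k\ge 1+\lceil (n-1)d/2\rceil$, which is still about $d/2$ short of $\lfloor (nd+1)/2\rfloor$, and you offer no mechanism forcing $|S_2|\ge d/2$. (2) In the alternative $h^1(\Ii_{S_1\cup 2S_2}(d-1))>0$, the scheme is mixed (simple plus double points) and the degree has dropped, so the inductive hypothesis as stated simply does not apply; ``a symmetric residuation further lowering the degree'' is not an argument, and strengthening the induction to mixed schemes is precisely the unresolved difficulty --- the paper handles it in low dimension only because Proposition \ref{sob1} classifies \emph{arbitrary} curvilinear schemes of degree $\le 3d+1$ with positive $h^1$, and no such classification is available in $\PP^n$ for degree up to $nd+1$.

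Two smaller points. The extraction of a minimally Terracini $T\subseteq S_1$ in $H$ also requires $\langle T\rangle =H$ (otherwise concision sends you to a smaller linear space with a weaker bound) and $h^0(\Ii_{2T,H}(d))>0$, neither of which is automatic. And the base case $n=2$ is not closed by the rational normal curve results (Theorem \ref{a9.0}, Proposition \ref{de2}), which produce \emph{nonempty} Terracini loci at $x=d+1$; the emptiness for $x\le d=\lfloor (2d+1)/2\rfloor$ is Proposition \ref{n2a1}(a), proved via the line lemma of Remark \ref{obs1}. In short, the proposal is a plausible plan whose central step is missing, for a statement the paper itself leaves open in general.
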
 
Here we prove the conjecture for $\PP^2$, Proposition \ref{n2a1}, and for $\PP^3$, Theorem \ref{ooo1}.

After the easy description of the situation in the plane (see Section \ref{sec-plane}), we focus on the case of $\PP^3$, and we obtain the following three results, which are the main results of this paper.
\begin{theorem}\label{ooo1}
Fix integers $d\ge 4$ and $x$ such that $2x\le 3d+1.$ Then $\TT(3,d;x)'=\emptyset$.
\end{theorem}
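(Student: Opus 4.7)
The plan is a proof by contradiction via the Castelnuovo--Horace residuation, combined with an induction on $d$ and the rational normal curve classification (Theorem \ref{a9.0} and Proposition \ref{de2}).

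Suppose $S\in\TT(3,d;x)'$ with $2x\le 3d+1$. Since $\langle S\rangle=\PP^3$, I would choose a plane $H\subset\PP^3$ and set $S'=S\cap H$, $a=|S'|$, $S''=S\setminus H$. From the Castelnuovo exact sequence
\begin{equation*}
0\longrightarrow\Ii_{2S''\cup S'}(d-1)\longrightarrow\Ii_{2S}(d)\longrightarrow\Ii_{2S',\,H}(d)\longrightarrow 0,
\end{equation*}
combined with $h^1(\Ii_{2S}(d))>0$, one of the two outer $h^1$'s must be positive. I would choose $H$ so as to kill the right-hand one: by the plane version of the conjecture (Proposition \ref{n2a1}), a positive $h^1$ of $2S'$ in degree $d$ on $H\cong\PP^2$ forces a minimally Terracini subset of $S'$ of cardinality $\ge d+1$ inside $H$ (using the concision Proposition \ref{prepa1} and the uniqueness of the degree in Theorem \ref{rob1}), which is incompatible with a generic-enough choice of $H$ given the minimality of $S$. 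Thus the residual term $h^1(\Ii_{2S''\cup S'}(d-1))$ is forced to be positive, and the problem descends to a lower degree.

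Iterating this residuation -- taking a further plane adapted to $S''$ -- allows an induction on $d$, reducing to the base cases at $d=4$ with $x\in\{5,6\}$. Those base cases I would handle directly: any such $S$ is forced to lie on a twisted cubic by the classification of low-degree configurations with $h^1>0$, and Theorem \ref{a9.0}/Proposition \ref{de2} give the minimum cardinality of a minimally Terracini set on a rational normal curve in $\PP^3$ as $1+\lceil 3d/2\rceil>\lfloor(3d+1)/2\rfloor\ge x$, the desired contradiction.

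The principal obstacle lies in the mixed scheme $2S''\cup S'$ which appears in the first residuation: the minimality of $S$ controls only pure double-point subschemes $2A$ with $A\subsetneq S$, not mixed schemes of doubles-plus-singles in degree $d-1$. Either $H$ must be chosen so that the subsequent residuation step returns cleanly to a pure double-point problem on $\PP^3$ of smaller degree (matching extremal cardinalities on both sides of the sequence), or one needs a separate lemma showing that a positive $h^1$ on such a mixed scheme forces $S$ either to contain a long collinearity (excluded by a B\'ezout argument based on the numerical bound $2x\le 3d+1$) or to lie on a rational normal curve (excluded by the RNC classification). Balancing the numerics $2x\le 3d+1$ across the decrement $d\mapsto d-1$ so that the induction actually closes is the technical heart of the argument.
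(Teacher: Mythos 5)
Your overall strategy (contradiction plus residuation with respect to a plane) points in the right general direction, but the proof does not close, and you yourself flag the fatal point: after one residuation the scheme $2S''\cup S'$ is mixed (doubles off $H$ plus simple points on $H$), and the minimality hypothesis on $S$ only controls $h^1(\Ii_{2A}(d))$ for $A\subsetneq S$ in degree $d$ --- it says nothing about $h^1$ of a doubles-plus-singles scheme in degree $d-1$. Neither of the two escape routes you sketch (a clever choice of $H$, or an unproved ``separate lemma'') is carried out, and the numerical bookkeeping needed to make the induction on $d$ descend from $2x\le 3d+1$ to the corresponding bound in degree $d-1$ is explicitly left open. The base case is also not actually established: the claim that every $S$ with $d=4$, $x\in\{5,6\}$ and $h^1(\Ii_{2S}(4))>0$ lies on a twisted cubic is asserted, not proved, and is not a consequence of Theorem \ref{a9.0} or Proposition \ref{de2}, which go in the opposite direction (they describe sets on such curves, they do not show that Terracini sets must lie on them).

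The paper's proof avoids all of this with two tools you did not use. First, it replaces $2S$ by a $d$-critical scheme $Z$ (Lemma \ref{chan1} and Lemma \ref{las1}): a curvilinear subscheme of $2S$ with $Z_{\red}=S$, all connected components of degree $\le 2$, and $h^1(\Ii_Z(d))>0$. This eliminates the mixed-scheme problem at the source, since $\deg(Z)\le 2x\le 3d+1$. Second, it invokes the structural classification of Proposition \ref{sob1}: such a $Z$ spanning $\PP^3$ with $\deg(Z)\le 3d+1$ and $h^1(\Ii_Z(d))>0$ must meet a line in degree $\ge d+2$, a conic in degree $\ge 2d+2$, or a plane cubic in a complete intersection of degree $3d$. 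In every case there is a plane $H$ with $h^1(\Ii_{Z\cap H}(d))>0$, hence $h^1(\Ii_{2(S\cap H)}(d))>0$ with $S\cap H\subsetneq S$ (as $\langle S\rangle=\PP^3$), contradicting minimality --- no induction on $d$ is needed at the level of the theorem. The missing ingredient in your write-up is precisely this reduction to critical schemes together with Proposition \ref{sob1} (whose proof is where the real Horace-type work happens); without it, your argument has a genuine gap.
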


\begin{theorem}\label{n3.1}
Fix integers $d\ge 7$ and  $x= 1 +\lceil {3d}/{2}\rceil$. Then $S\in \TT(3,d;x)'$ if and only if $S$ is contained in a rational
normal curve.
\end{theorem}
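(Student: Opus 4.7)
The plan is to establish the two implications of the equivalence separately.

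For the ``if'' direction, assume $S$ is contained in a rational normal curve (i.e., a twisted cubic) $C\subset \PP^3$ with $|S|=1+\lceil 3d/2\rceil$. The minimally Terracini sets supported on rational normal curves in $\PP^n$ are completely classified in Theorem \ref{a9.0}, and that result identifies the extremal cardinality $1+\lceil nd/2\rceil$ as a cardinality for which such sets do exist. Specializing to $n=3$ gives $S\in \TT(3,d;x)'$.

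For the ``only if'' direction, suppose $S\in \TT(3,d;x)'$ with $x=1+\lceil 3d/2\rceil$, and argue by contradiction assuming that $S$ is not contained in any twisted cubic. The main tool is the Castelnuovo-type residual exact sequence
$$0\lra \Ii_{\Res(2S,W)}(d-e)\lra \Ii_{2S}(d)\lra \Ii_{2S\cap W,W}(d)\lra 0,$$
applied to hypersurfaces $W\subset\PP^3$ of low degree $e$, namely planes and quadrics. Combined with the minimality condition $h^1(\Ii_{2A}(d))=0$ for every $A\subsetneq S$, this places strong constraints on the trace and the residual of $2S$ with respect to $W$.

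First I would bound the number $k=|S\cap H|$ of points of $S$ lying on an arbitrary plane $H$: if $k$ were too large, the sequence above combined with the minimality of $S$ and the inductive information provided by Theorem \ref{ooo1} applied in degree $d-1$ to the residual scheme would yield $h^1(\Ii_{2S}(d))=0$, a contradiction. A parallel analysis carried out for a quadric surface $Q$ controls the number of points of $S$ lying on any quadric. Intersecting two independent quadrics through enough points of $S$ then produces a curve $\Gamma$ of degree at most four containing $S$; combining the extremal cardinality $x=1+\lceil 3d/2\rceil$ with B\'ezout for $\Gamma$ and a generic surface of degree $d$ forces $\deg\Gamma=3$, that is, $\Gamma$ is a degree-$3$ nondegenerate curve of $\PP^3$. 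Any such curve is either a twisted cubic or one of the reducible (or non-reduced) rational normal curves classified by Proposition \ref{de2}; the degenerate cases are ruled out because the cardinalities for which they support a minimally Terracini set, as computed in Proposition \ref{de2}, do not include $1+\lceil 3d/2\rceil$.

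The main obstacle is the combinatorial bookkeeping in the residuation argument: the residual scheme $\Res(2S,W)$ is not itself a union of fat points (it has simple points at $S\cap W$ together with doubled points off $W$), so one cannot apply Theorem \ref{ooo1} to it as a black box, and one must develop parallel vanishing estimates for such mixed schemes and feed them recursively into the sequence. The hypothesis $d\ge 7$ enters through the need for these inductive lower-degree estimates to be non-vacuous, and carefully handling the boundary cases where $2x$ is only slightly larger than $3d+1$ constitutes the most delicate part of the argument.
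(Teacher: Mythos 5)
Your overall strategy --- residuate with respect to planes and quadrics, pin $S$ to a curve of degree at most four, and then invoke Proposition \ref{de2} to exclude the degenerate cubics --- is the same skeleton as the paper's argument, and your use of Theorem \ref{a9.0} for the ``if'' direction matches the paper exactly. But the ``only if'' direction as written has a genuine gap, and it is precisely the one you flag without resolving: the residual of $2S$ with respect to a surface is a mixed scheme to which neither Theorem \ref{ooo1} nor any statement about fat points applies. The paper's way out is not ``parallel vanishing estimates for mixed schemes'' but a change of object: by Chandler's lemma (Lemma \ref{chan1}) one replaces $2S$ by a $d$-critical scheme $Z\subseteq 2S$ whose connected components have degree $\le 2$ and whose reduction is all of $S$ (Lemma \ref{las1}); all residuals of such a $Z$ again have components of degree $\le 2$, and Proposition \ref{sob1} classifies exactly these schemes of degree $\le 3d+1$ with $h^1(\Ii_Z(d))>0$ (a line meeting $Z$ in degree $\ge d+2$, a conic in degree $\ge 2d+2$, or a planar cubic complete intersection). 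Without this classification, or an equivalent substitute, the recursion you describe cannot be closed, so the plan is missing its engine rather than merely its bookkeeping.

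Two further steps are asserted but would not go through as stated. First, ``intersecting two independent quadrics through enough points of $S$'' presupposes that $Z$ lies on two independent quadrics; in the paper this is the content of steps (a)--(c) of the proof (showing $Z$ lies on some quadric, that this quadric is integral, and that every quadric through a degree $8-\epsilon$ subscheme of $Z$ contains $Z$), and it consumes most of the argument. Second, B\'ezout against a generic degree-$d$ surface does not force $\deg\Gamma=3$: the complete intersection $E$ of two quadrics has degree $4$ and arithmetic genus $1$, and since $2x\le 3d+3<4d$ the scheme $Z$ does fit on $E$ numerically. The paper instead shows that when $E$ is integral the sheaf $\Ii_{Z,E}(d)$ has positive degree, hence $h^1(\Ii_{Z,E}(d))=0$ and $E$ cannot carry the cohomology --- a genus/degree computation on the curve, not a B\'ezout count --- and then handles the non-reduced and reducible $E$ by a case analysis that extracts a connected degree-$3$ subcurve containing $Z$, to which Proposition \ref{de2} finally applies. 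Your appeal to Proposition \ref{de2} at the end is the right closing move, but the passage from ``$S$ lies on a quartic complete intersection'' to ``$S$ lies on a twisted cubic'' is where the substance lies.
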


\begin{theorem}\label{ceo1}
Fix integers $d\ge 17$ and $x$ such that $1+\lceil {3} d/2\rceil < x < 2d$. 
Then $\TT(3,d;x)'=\emptyset$.  
\end{theorem}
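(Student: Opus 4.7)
The approach is to argue by contradiction, via a Castelnuovo residual argument with respect to a plane of maximum trace. Suppose $S \in \TT(3,d;x)'$ with $d\ge 17$ and $1+\lceil 3d/2\rceil < x < 2d$. Pick a plane $H \subset \PP^3$ maximizing $\sigma := |S\cap H|$, and set $S_1 = S\cap H$, $S_2 = S\setminus H$, so $|S_2| = x-\sigma$. Because $\langle S\rangle = \PP^3$, $S_2$ is non-empty and $S_1 \subsetneq S$; minimality then yields $h^1_{\PP^3}(\Ii_{2S_1}(d)) = 0$. Combined with the restriction sequence
\[
0 \to \Ii_{S_1,\PP^3}(d-1) \to \Ii_{2S_1,\PP^3}(d) \to \Ii_{2S_1,H}(d) \to 0
\]
and the vanishing $h^2(\Ii_{S_1,\PP^3}(d-1)) = 0$ (since $S_1$ is zero-dimensional and $h^2(\Oo_{\PP^3}(d-1))=0$), we obtain $h^1_H(\Ii_{2S_1,H}(d)) = 0$.

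Next, the Castelnuovo residual sequence
\[
0 \to \Ii_{2S_2\cup S_1,\PP^3}(d-1) \to \Ii_{2S,\PP^3}(d) \to \Ii_{2S_1,H}(d) \to 0,
\]
combined with $h^1(\Ii_{2S}(d)) > 0$ and the vanishing just obtained, forces
\[
h^1_{\PP^3}(\Ii_{2S_2\cup S_1}(d-1)) > 0,
\]
a cohomological obstruction in degree $d-1$ for the mixed-multiplicity scheme $Z := 2S_2 \cup S_1$, of length $4(x-\sigma)+\sigma = 4x-3\sigma$.

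The core of the proof is to convert this obstruction into a contradiction. I would extract from $Z$ a minimal subscheme still witnessing the failure of postulation in degree $d-1$, and analyze it: if it reduces to a pure double-point configuration on a point set of cardinality $x'\le x$, the earlier non-existence results apply. When $2x' \le 3(d-1)+1$, Theorem \ref{ooo1} rules out the configuration directly; when $x'= 1+\lceil 3(d-1)/2\rceil$, Theorem \ref{n3.1} forces the points onto a rational normal curve, and one derives a contradiction either from the cardinality constraint $x<2d$ or from the planarity of the trace $S_1$ versus non-degeneracy of an RNC in $\PP^3$. If $Z$ does not reduce in this way, one iterates the residuation with a second plane $H'$ chosen to maximize the remaining trace, passing to degree $d-2$ and further constraining the splitting until a numerical impossibility appears, using Proposition \ref{n2a1} as the $\PP^2$-boundary input.

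The main obstacle is managing the mixed-multiplicity scheme $Z$: our tools (minimality in $\PP^3$, Theorems \ref{ooo1} and \ref{n3.1}, Proposition \ref{n2a1}) are formulated for pure double-point schemes, so the simple-point contribution of $S_1$ to $Z$ must be either absorbed through a further round of residuation, or shown to contribute negligibly to $h^1(\Ii_Z(d-1))$ under the numerical hypotheses. The explicit threshold $d \ge 17$ most likely reflects the need to keep the base-case inputs (the degrees $d-1$ and $d-2$ occurring in Theorems \ref{ooo1} and \ref{n3.1}, and the plane results) effective after one or two iterations of the residual sequence.
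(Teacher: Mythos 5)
Your opening reduction is fine as far as it goes (the residual sequence with respect to a plane of maximal trace, the vanishing of the trace term by minimality, and the resulting obstruction $h^1(\Ii_{S_1\cup 2S_2}(d-1))>0$ are all correct), but the proof stops exactly where the real difficulty begins, and the strategy you sketch for the remainder does not close. The scheme $S_1\cup 2S_2$ has degree $4x-3\sigma$, which for $x$ near $2d$ is close to $8d-3\sigma$; since you have no lower bound on $\sigma$ beyond $\sigma\ge 3$, this degree is far above the threshold $3(d-1)+1$ needed to invoke any of the structural results (Proposition \ref{sob1}, Theorem \ref{ooo1} in degree $d-1$), and the scheme is not a union of double points, so Theorems \ref{ooo1} and \ref{n3.1} simply do not apply to it. You name this as ``the main obstacle'' but the two escape routes you offer (absorb $S_1$ by further residuation, or show it contributes negligibly) are not carried out and there is no evident way to carry them out: further plane residuations only produce more mixed schemes of uncontrolled degree. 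You are also missing the device that makes the degree bounds workable at all, namely replacing $2S$ by a $d$-critical scheme $Z\subseteq 2S$ (Lemma \ref{chan1}, Lemma \ref{las1}), whose connected components have degree $\le 2$ and whose total degree is $\le 2x\le 4d-2$; Proposition \ref{sob1} is stated precisely for such curvilinear schemes and is unusable for $2S$ or for $S_1\cup 2S_2$.

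The paper's proof residuates the critical scheme with respect to \emph{quadrics} (chosen with maximal trace), not planes. This is not a cosmetic difference: the quadric iteration either terminates after one step, placing $Z$ on a single quadric $Q$ and then, after a second iteration inside $|\Oo_Q(2)|$, on a degree-$4$ curve $E=Q\cap Q'$, or it produces lines/conics of high contact via Proposition \ref{sob1} that lead to numerical contradictions. The case $Z\subset E$ is where the hypothesis $x<2d$ is finally used (the torsion-free sheaf $\Ii_{Z,E}(d)$ on the elliptic quartic has degree $4d-\deg(Z)>0$, cf.\ Remark \ref{integral complete intersection} and Lemma \ref{lemma-riducibile}), and this is exactly why the bound is sharp at $x=2d$ (Example \ref{ex4d}). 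Your proposal contains no mechanism that distinguishes $x<2d$ from $x=2d$, which is a strong signal that the argument as outlined cannot succeed. (Incidentally, the threshold $d\ge 17$ arises from an explicit inequality in the three-quadric subcase of the paper's iteration, not from keeping lower-degree base cases effective.)
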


The bound in Theorem \ref{ceo1} is sharp, as shown in Example \ref{ex4d}, where $2d$ points lie on an elliptic curve.

Summing up,  our results prove that, given $d>0$ and $x\le 2d$, then the minimal Terracini loci $\TT(3,d;x)'$ are empty except for

$\bullet$ either $x= 1 +\lceil {3d}/{2}\rceil$, and in this case the points lie on a rational normal curve,

$\bullet$ or $x=2d$, and in this case the points may lie on an elliptic curve. 

\noindent
We call $(0,1 +\lceil {3d}/{2}\rceil), (1 +\lceil {3d}/{2}\rceil, 2d)$ the first two gaps where the minimal Terracini loci are empty.

 The situation is completely analogous in $\PP^2$, where the first two gaps are $(0,d+1)$ and $(d+1,\lfloor 3d/2\rfloor)$, see Section \ref{sec-plane}.  
 
We expect that a similar behaviour happens also in any dimension $n\ge2$.

\smallskip

The paper is organized as follows:
in Section \ref{prelimi} we present the preliminary results and in particular we introduce the notion of critical scheme, which is a crucial tool in our proofs.
Section \ref{sec 3} contains the first properties of Terracini and minimal Terracini  sets and 
the proof of Theorem  \ref{ai0}.
In Section \ref{RNC} we characterize the minimally Terracini sets of points on rational normal curves and their degenerations. Section \ref{sec-plane} is devoted to the plane, while Section \ref{ultimasez} to the case of $\PP^3$ and to the proofs of Theorems
\ref{ooo1},
\ref{n3.1} and \ref{ceo1}.

\smallskip

We thank the referee for many useful suggestions that improved our presentation.

\section{Preliminaries and notation}\label{prelimi}
We work over an algebraically closed field $\KK$ of characteristic $0$.
For any $x\in\NN$, let $S(\PP^n,x)$ denote the  set of all subsets of cardinality  $x$ of a projective space $\PP^n$.
For any set $E\subset\PP^n$,  let $\langle E\rangle$ denote the linear span of $E$ in $\PP^n$.
\begin{remark}\label{remPE}
It is well-known that the set of configurations of $n+2$ points of $\PP^n$ in linear general position is an open orbit  for the action of $\mathrm{Aut}(\PP^n)$.
\end{remark}

\begin{definition}\label{min1}
We denote by $\TT_1(n,d;x)$ the set of all $S\in S(\PP^n,x)$ such that 
\begin{itemize}
\item
$h^0(\Ii_{2S}(d)) >0$ and $h^1(\Ii_{2S}(d)) >0$.
\end{itemize}
We denote by $\TT(n,d;x)\subseteq \TT_1(n,d;x)$ the set of all $S\in \TT_1(n,d;x)$ such that 
\begin{itemize}
\item  $\langle S\rangle =\PP^n$.
\end{itemize}
We  call {\it Terracini locus} the set $\TT(n,d;x)$ and we say that a finite set $S$  is 
{\it Terracini with respect to} $\Oo_{\PP^n}(d)$ if $S\in\TT(n,d;x)$.
\end{definition}

Obviously  $\TT(n,d;x) =\emptyset$ for all $x\le n$, since every $S\in \TT(n,d;x)$ spans $\PP^n$.

We recall from \cite[Definition 2.2]{bbs} the following important definition; it applies to any projective variety, but we write it now only {in} the case of $\PP^n$.
\begin{definition}\label{min1}
A set $S$ is said to be {\it minimally Terracini with respect to $\Oo_{\PP^n}(d)$} if it is Terracini and moreover
\begin{itemize}
\item $h^1(\Ii_{2A}(d)) =0$ for all $A\subsetneq S$.
\end{itemize}
We denote by $\TT(n,d;x)'$ the set of all $S\in \TT(n,d;x)$ which are minimally Terracini with respect to $\Oo_{\PP^n}(d)$.
\end{definition}

In the next remark we recall the exceptional cases of the Alexander-Hirschowitz theorem, {which are all the cases when any general set of points is minimally Terracini.} 
\begin{remark}\label{a5}  
Assume
$(n,d;x)\in\{(2,4; 5), (3,4;9), (4,4; 14), (4,3; 7)\}$. 
Then by the Alexander-Hirschowitz theorem \cite{ah3} we know that the Veronese variety $\nu_d(\PP^n)$ is $x$-defective.


 Fix a general $S\in S(\PP^n,x)$. We have that 
$h^0(\Ii_{2S}(d)) >0$ because the $x$-secant variety does not fill the ambient space,
and $h^1(\Ii_{2S}(d)) >0$ because it is defective. 
Moreover, since $x\ge n+1$, we have $\langle S\rangle=\PP^n$ and hence 
 $S\in \TT(n,d;x)$. 
 
 We prove now that $S$ is minimal.
Indeed, since $S$ is general, then any subset $S'\subset S$ of cardinality $y<x$ is general in
$S(\PP^n,y).$
 Since the secant variety $\sigma _{y}(\nu_d(\PP^n))$ is not defective for any $y\le x-1$, then $h^1(\Ii_{2S'}(d))=0$.
Then we have proved that $S\in \TT(n,d;x)'$.
\end{remark}

We collect here some preliminary results we will use in the sequel.

\begin{lemma}\label{a7}
For any zero-dimensional scheme $Z\subset \PP^n$ and any integer $t\ge0$,
we have $h^i(\Ii_Z(t)) =0$ for all $i\ge2$, and
$$
h^0(\Ii_Z(t)-h^1(\Ii_Z(t))=\binom{n+t}{n}-\deg(Z).
$$
\end{lemma}
\begin{proof}
Since $Z$ is zero-dimensional, we have $h^i(\Oo _Z(t)) =0$ for   all $i>0$. Obviously $h^i(\Oo_{\PP^n}(t)) =0$ for all $i>0$. Then from the exact sequence
$$0\to \Ii_Z(t)\to \Oo_{\PP^n}(t) \to \Oo_Z(t)\to 0$$
we obtain the formulas in the statement.
\end{proof}

\begin{lemma}\label{a9=}
Let $W\subset Z\subset \PP^n$ be zero-dimensional schemes and $t\ge0$. Then we have
$$h^0(\Ii_Z(t))\le h^0(\Ii _W(t))\mbox{\ \ and \ }h^1(\Ii _W(t)) \le h^1(\Ii _Z(t)),$$
and
$$h^0(\Ii_Z(t))\le h^0(\Ii _Z(t+1))\mbox{\ \ and \ }h^1(\Ii _Z(t+1)) \le h^1(\Ii _Z(t)).$$
\end{lemma}

\begin{proof}
Since  $W\subset Z$, then we have the exact sequences
$$0\to \Ii_{W,Z}(t)\to \Oo_W(t) \to \Oo_Z(t)  \to 0,$$
and
$$0\to \Ii_Z(t)\to \Ii_W(t) \to \Ii_{W,Z}(t)\to 0.$$
Since $Z$ is zero-dimensional, then $h^i(\Ii_{W,Z}(d)) =0$ for all $i\ge 1$. Then we get 
$$h^0(\Ii_Z(d))\le h^0(\Ii _W(d))\mbox{\ \ and \ }h^1(\Ii _W(d)) \le h^1(\Ii _Z(d)).$$
From 
the exact sequence
$$0\to \Ii_Z(t)\to \Ii_Z(t+1) \to \Oo_{H}(t+1)\to 0,$$
where $H\subset \PP^n$ is an hyperplane,
it follows that $$h^0(\Ii_Z(d))\le h^0(\Ii _Z(d+1))\mbox{\ \ and \ }h^1(\Ii _Z(d+1)) \le h^1(\Ii _Z(d)).$$
\end{proof}

\begin{lemma}\label{a8}
Given a hyperplane $H\subset \PP^n$ and any finite set $S\subset H$, we have
$$
h^1(\Ii_{2S\cap H,H}(d))\le h^1(\Ii_{2S}(d)) \le h^1(\Ii_{2S\cap H,H}(d))+h^1(\Ii_S(d-1)).
$$
\end{lemma}
\begin{proof}
From
the residual exact sequence with respect to $H$
$$0\to \Ii_S(d-1)\to \Ii_{2S}(d)\to \Ii_{2S\cap H,H}(d)\to 0,
$$
and by Lemma \ref{a7}
the statement
follows.
\end{proof}

{We recall from \cite{bgi} the following useful lemma.}
    \begin{lemma}{\cite[Lemma 34]{bgi}}
    \label{obs1}
Let $Z$ be a zero dimensional scheme in $\PP^n$, such that $h^1(\Ii_Z(d))>0$. 
If $\deg (Z)\le 2d+1$, then there is a line $L$ such that $\deg (Z\cap L)\ge d+2$.
In particular it follows that $\deg(Z)\ge d+2$.
\end{lemma}

We recall the following lemma which we learned from K.\ Chandler (\cite{c0,c}). 

\begin{lemma}\label{chan1}
Let $W$ be an integral projective variety, $\Ll$ a line bundle on $W$ with $h^1(\Ll)=0$ and $S\subset W_{\reg}$ a finite collection of points. Then
$h^1(\Ii _{(2S,W)}\otimes \Ll)>0$ if and only if  there is a scheme $Z\subset 2S$  such that any connected component of $Z$ has degree 
$\le 2$ and such that $h^1(\Ii _Z\otimes \Ll)>0$. 
\end{lemma}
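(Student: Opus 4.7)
The plan is to prove the two implications separately. For the direction $(\Leftarrow)$, I would use the standard ``larger scheme, larger $h^1$'' principle: the inclusion of zero-dimensional subschemes $Z\subset 2S$ gives the short exact sequence
\[
0\to \Ii_{2S}\otimes\Ll\to \Ii_Z\otimes\Ll\to \Ii_{Z,2S}\otimes\Ll\to 0,
\]
whose quotient is supported on the zero-dimensional $2S$ and so has vanishing $H^1$. The long exact sequence produces a surjection $H^1(\Ii_{2S}\otimes\Ll)\twoheadrightarrow H^1(\Ii_Z\otimes\Ll)$, transferring positivity in the desired direction (the same mechanism as Remark \ref{a9=}).

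For the direction $(\Rightarrow)$, I would choose $Z\subseteq 2S$ minimal under inclusion among subschemes with $h^1(\Ii_Z\otimes\Ll)>0$. By minimality every colength-one subscheme $W\subsetneq Z$ satisfies $h^1(\Ii_W\otimes\Ll)=0$; combined with the analogous short exact sequence, this forces $h^1(\Ii_Z\otimes\Ll)=1$. Using $h^1(\Ll)=0$, the evaluation map $\varphi\colon H^0(\Ll)\to H^0(\Ll|_Z)$ has one-dimensional cokernel, so its image $H$ is a hyperplane in $H^0(\Ll|_Z)$. The claim to establish is that every connected component of this minimal $Z$ has degree $\le 2$; I argue by contradiction, assuming some component $Z_p$ supported at $p\in S$ has $\deg Z_p\ge 3$.

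Write $Z=Z'\sqcup Z_p$ along disjoint supports, so $H^0(\Ll|_Z)=H^0(\Ll|_{Z'})\oplus H^0(\Ll|_{Z_p})$. The key observation is that since $Z_p\subseteq 2p$, the maximal ideal $\overline{\mm}_p:=\mm_p/I_{Z_p}$ of $\Oo_{Z_p}$ satisfies $\overline{\mm}_p^{\,2}=0$, so every vector subspace of $\overline{\mm}_p$ is automatically an ideal of $\Oo_{Z_p}$; moreover $\dim\overline{\mm}_p=\deg Z_p-1\ge 2$. For each line $L\subset\overline{\mm}_p$, the ideal $I_{Z_p}+L$ defines a subscheme $Y_p\subsetneq Z_p$ of colength one, hence $W:=Z'\cup Y_p\subsetneq Z$ is proper and by minimality the composition $H\hookrightarrow H^0(\Ll|_Z)\twoheadrightarrow H^0(\Ll|_W)$ is surjective. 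Equivalently, $H$ does not contain the one-dimensional kernel of this quotient, which after trivializing $\Ll$ at $p$ is precisely the line $\{0\}\oplus L\subset H^0(\Ll|_Z)$.

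Thus the hyperplane $H$ must avoid every line inside the subspace $V:=\{0\}\oplus\overline{\mm}_p\subset H^0(\Ll|_Z)$ (in the local trivialization), a subspace of dimension $\deg Z_p-1\ge 2$. But any hyperplane meets a subspace of dimension $\ge 2$ in a subspace of dimension $\ge 1$, so $H\cap V$ contains some line---necessarily of the forbidden form---a contradiction forcing $\deg Z_p\le 2$. I expect the main obstacle to be this concluding linear-algebra bookkeeping: identifying the colength-one subschemes of $Z_p$ with lines in $\overline{\mm}_p$ and tracking them through the local trivialization of $\Ll$. The hypothesis $Z_p\subseteq 2p$ (rather than a higher infinitesimal neighborhood of $p$) is what guarantees $\overline{\mm}_p^{\,2}=0$ and hence that arbitrary subspaces serve as ideals, which is also why the degree-$2$ threshold is the natural one in the statement.
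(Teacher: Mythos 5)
Your argument is correct, but there is nothing in the paper to compare it against: Lemma \ref{chan1} is stated without proof, being quoted from Chandler's work \cite{c0,c}. Both implications check out. The $(\Leftarrow)$ direction is the standard monotonicity of $h^1$ under enlarging a zero-dimensional scheme (the paper's Remark \ref{a9=}). For $(\Rightarrow)$, your choice of an inclusion-minimal $Z\subseteq 2S$ with $h^1(\Ii_Z\otimes\Ll)>0$ forces $h^1=1$, hence the image $H$ of the evaluation map $H^0(\Ll)\to H^0(\Ll|_Z)$ is a hyperplane (here the hypothesis $h^1(\Ll)=0$ is used, exactly as it must be); the identification of colength-one subschemes of a component $Z_p\subseteq 2p$ with lines in $\overline{\mm}_p$ is legitimate precisely because $\overline{\mm}_p^{\,2}=0$, so every subspace is an ideal; and the final dimension count ($H$ a hyperplane cannot avoid all lines of a subspace of dimension $\ge 2$) correctly rules out $\deg Z_p\ge 3$. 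This is in substance Chandler's own curvilinear-subscheme argument, so your proposal supplies a complete self-contained proof of a statement the paper only cites. One cosmetic point: you reuse the letter $W$ both for the ambient variety and for the colength-one subscheme $Z'\cup Y_p$; rename the latter to avoid the clash.
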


The schemes $Z$ appearing in Lemma \ref{chan1} are curvilinear subscheme of a collection of double points. More precisely in the following definition we introduce the notion of {\it critical schemes}, which are the crucial tools in our proofs.

\begin{definition}\label{def-crit}
Given 
$S$ a collection of $x$ points in $\PP^n$,
we say that a zero-dimensional scheme $Z$ is {\it $d$-critical for $S$} if:
\begin{itemize}
\item $Z\subseteq 2S$ and any connected component of $Z$ has degree  $\le 2$,
\item $h^1(\Ii_Z(d))>0$,
\item $h^1(\Ii_{Z'}(d))=0$ for any  $Z' \subsetneq Z$.
\end{itemize}
\end{definition}

Note that Lemma \ref{chan1}
 implies that for every $S\in \TT(n,d;x)$ there
exists a $d$-critical scheme for $S$.


The next lemmas describe the properties of a critical scheme.
\begin{lemma}\label{las0}
Let $Z$ be a zero-dimensional scheme such that
$h^1(\Ii_Z(d))>0$ and $h^1(\Ii_{Z'}(d))=0$ for any  $Z' \subsetneq Z$.
Then $h^1(\Ii_{Z}(d))=1$.
\end{lemma}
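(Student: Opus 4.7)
The plan is to use a short exact sequence argument, removing a single ``length-one'' piece from $Z$ and exploiting minimality.

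First I would produce a subscheme $Z'\subsetneq Z$ with $\deg Z' = \deg Z -1$. Since $Z$ is zero-dimensional, $\Oo_Z \cong \bigoplus_{p\in \Supp Z} \Oo_{Z,p}$ is a finite direct sum of Artinian local rings; picking a nonzero socle element at some point $p$ and killing it produces a quotient of length $\deg Z - 1$, hence the desired $Z'$. By construction the quotient sheaf $\Ii_{Z',Z}:=\Ii_{Z'}/\Ii_Z$ is supported at a single point and has length $1$.

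Next I would twist the exact sequence
\begin{equation*}
0\to \Ii_Z(d)\to \Ii_{Z'}(d)\to \Ii_{Z',Z}(d)\to 0
\end{equation*}
and pass to cohomology. Since $\Ii_{Z',Z}$ is a skyscraper of length $1$, we have $h^0(\Ii_{Z',Z}(d))=1$ and $h^1(\Ii_{Z',Z}(d))=0$. The minimality hypothesis gives $h^1(\Ii_{Z'}(d))=0$, so the long exact sequence reads
\begin{equation*}
H^0(\Ii_{Z'}(d))\to \CC \to H^1(\Ii_Z(d))\to 0.
\end{equation*}
Thus $h^1(\Ii_Z(d))\le 1$, and combined with the assumption $h^1(\Ii_Z(d))>0$ this forces equality.

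The only genuinely delicate point is the first step, namely that a minimal drop in length is always available; this is why I single out the socle construction at a point of $Z$, which guarantees a subscheme of length exactly $\deg Z -1$. Once this is in hand, the remainder of the argument is formal cohomology of the residual sequence plus the length-$1$ calculation for the skyscraper quotient.
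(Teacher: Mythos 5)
Your proposal is correct and follows essentially the same route as the paper: the paper also passes to a subscheme $Z'\subsetneq Z$ with $\deg Z'=\deg Z-1$ and invokes the inequality $h^1(\Ii_{Z'}(d))\ge h^1(\Ii_Z(d))-\deg Z+\deg Z'$, which is exactly what your long exact sequence computation proves (the paper phrases it as a contradiction assuming $h^1\ge 2$, and leaves both the existence of $Z'$ and the inequality implicit). Your explicit socle construction and the length-one skyscraper calculation simply fill in the details the paper takes for granted.
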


\begin{proof}
Assume $h^1(\Ii_Z(d)) \ge 2$ and take a subscheme $Z'\subset Z$ such that $\deg (Z')=\deg(Z)-1$.
We have $h^1(\Ii_{Z'}(d)) \ge h^1(\Ii_Z(d)) -\deg(Z)+\deg(Z')>0$. Thus $Z$ is not critical, a contradiction.
\end{proof}

\begin{lemma}\label{las1}
Fix $S\in \TT(n,d;x)'$ and take $Z$ critical for $S$. Then $Z_{\red}=S$.
\end{lemma}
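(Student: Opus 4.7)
The plan is to argue by contradiction, exploiting the minimality hypothesis directly. First I set $A := Z_{\red}$. Since $Z \subseteq 2S$ by the definition of a critical scheme, the support of $Z$ is contained in the support of $2S$, which is $S$; hence $A \subseteq S$.

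Suppose for contradiction that $A \subsetneq S$. Since every connected component of $Z$ has degree at most $2$ and is supported at a point of $A$, each such component sits inside the corresponding double point at a point of $A$. Therefore $Z \subseteq 2A$. Applying Remark \ref{a9=} to the inclusion $Z \subseteq 2A$, we obtain
\[
h^1(\Ii_{2A}(d)) \ge h^1(\Ii_Z(d)) > 0,
\]
where the second inequality is the second defining property of a critical scheme.

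On the other hand, $S \in \TT(n,d;x)'$ is minimally Terracini, so by Definition \ref{min1} we have $h^1(\Ii_{2A}(d)) = 0$ for every proper subset $A \subsetneq S$. This directly contradicts the inequality above. Hence the assumption $A \subsetneq S$ is false, and $Z_{\red} = A = S$.

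There is no serious obstacle here; the statement is essentially a one-line consequence of minimality combined with the trivial monotonicity of $h^1$ for inclusions of zero-dimensional schemes (Remark \ref{a9=}). The only point requiring any care is the observation that restricting the support of the double scheme $2S$ to a subset $A$ yields exactly $2A$, so that $Z \subseteq 2S$ together with $Z_{\red} \subseteq A$ forces $Z \subseteq 2A$.
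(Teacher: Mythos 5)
Your proof is correct and follows essentially the same route as the paper's: assume $A:=Z_{\red}\subsetneq S$, deduce $h^1(\Ii_{2A}(d))>0$ from $Z\subseteq 2A$ and $h^1(\Ii_Z(d))>0$, and contradict minimality. The only cosmetic difference is that the paper cites Lemma \ref{chan1} where you invoke Remark \ref{a9=} directly; the relevant (trivial) direction of Lemma \ref{chan1} is precisely that monotonicity, so the two arguments coincide.
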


\begin{proof}
Assume $S':= Z_{\red}\ne S$. Lemma \ref{chan1} gives $h^1 (\Ii_{2S'}(d))>0$. 
Thus $S$ does not belong to $\TT(n,d;,x)'$, a contradiction.
\end{proof}

\begin{lemma}\label{a43} 
Fix integers $n\ge 2$, $d>t\ge 1$ and $x>1$. Take $S\in \TT(n,d;x)'$ and a critical scheme $Z$ for $S$. Take $D\in |\Oo_{\PP^n}(t)|$ with $Z\nsubseteq D$. Then $h^1(\Ii_{\Res_D(Z)}(d-t)) >0$.
\end{lemma}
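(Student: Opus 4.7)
My plan is to argue by contradiction. Assume $h^1(\Ii_{\Res_D(Z)}(d-t)) = 0$ and produce a proper subscheme $Z' \subsetneq Z$ with $h^1(\Ii_{Z'}(d))>0$, which contradicts the criticality of $Z$. The natural candidate for $Z'$ is $Z\cap D$: since $Z\not\subseteq D$, it is automatically a proper subscheme of $Z$, so all I need to do is push positivity of $h^1$ onto it.

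Step one is to use the residual exact sequence of $D$,
$$0\to \Ii_{\Res_D(Z)}(d-t)\to \Ii_Z(d)\to \Ii_{Z\cap D,D}(d)\to 0.$$
Combined with Lemma \ref{las0} (which gives $h^1(\Ii_Z(d))=1$) and the assumed vanishing $h^1(\Ii_{\Res_D(Z)}(d-t))=0$, the long exact cohomology sequence produces an injection $H^1(\Ii_Z(d))\hookrightarrow H^1(D,\Ii_{Z\cap D,D}(d))$. In particular,
$$h^1(D,\Ii_{Z\cap D,D}(d))\ge 1.$$

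Step two is to lift this to $\PP^n$. Because $Z\cap D\subseteq D$, there is the exact sequence of sheaves on $\PP^n$
$$0\to \Oo_{\PP^n}(d-t)\to \Ii_{Z\cap D}(d)\to \Ii_{Z\cap D,D}(d)\to 0,$$
where the first term is just $\Ii_D(d)$. Since $n\ge 2$ and $d-t\ge 1$, we have $h^1(\Oo_{\PP^n}(d-t))=0$ (no intermediate cohomology of line bundles on $\PP^n$ for $n\ge 2$) and $h^2(\Oo_{\PP^n}(d-t))=0$ (obvious for $n\ge 3$; for $n=2$ by Serre duality, since $-d+t-3<0$). The long exact sequence then gives $h^1(\Ii_{Z\cap D}(d))=h^1(D,\Ii_{Z\cap D,D}(d))\ge 1$.

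Finally, the hypothesis $Z\not\subseteq D$ means that some point of $Z$ avoids $D$, so $Z\cap D\subsetneq Z$; by criticality of $Z$ this forces $h^1(\Ii_{Z\cap D}(d))=0$, contradicting the bound just obtained. Hence $h^1(\Ii_{\Res_D(Z)}(d-t))>0$. The only delicate point in the argument is the cohomological transfer from $D$ back to $\PP^n$ in step two; once one knows that $\Oo_{\PP^n}(d-t)$ has vanishing $H^1$ and $H^2$ under the hypotheses $n\ge 2$ and $d>t\ge 1$, the rest is formal.
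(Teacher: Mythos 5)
Your argument is correct and is essentially the paper's proof run in contrapositive form: both rest on the residual exact sequence of $D$ together with the criticality of $Z$ applied to the proper subscheme $Z\cap D\subsetneq Z$. The only difference is cosmetic — you make explicit the transfer $h^1(\Ii_{Z\cap D}(d))=h^1(D,\Ii_{Z\cap D,D}(d))$ via the vanishing of $h^1$ and $h^2$ of $\Oo_{\PP^n}(d-t)$, which the paper leaves implicit.
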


\begin{proof}
Since $Z\nsubseteq D$ and is critical, then Definition \ref{def-crit} gives $h^1(\Ii_{Z\cap D}(d)) =0$. Thus the residual exact sequence with respect to $D$
gives $h^1(\Ii_{\Res_D(Z)}(d-t)) >0$.
\end{proof}

\section{First results on minimally Terracini sets of points}
\label{sec 3}

We now prove the fact that  if $S\in S(\PP^n,x)$ is minimally Terracini for some $\Oo_{\PP^n}(d)$, then such $d$ is unique and 
{
it is the maximal integer
$t$ such that $h^1(I_{2S}(t)) >0$.}

\begin{theorem}\label{rob1}
{Fix $n\ge 2$ and $S\in \TT(n,d;x)'$. Then 

\quad (i) $h^1(\Ii _{2S}(d+1)) =0$, 

\quad (ii) $S\notin \TT(n,t;x)$ for any $t\ge d+1$, 

\quad (iii) $S\notin \TT(n,t;x)'$ for any $t\le d-1$.}
\end{theorem}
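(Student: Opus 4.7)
The plan is to establish (i) first; parts (ii) and (iii) then follow almost immediately from Remark \ref{a9=}.

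For (i), I argue by contradiction. Suppose $h^1(\Ii_{2S}(d+1))>0$. By Remark \ref{a9=}, $h^0(\Ii_{2S}(d+1))\ge h^0(\Ii_{2S}(d))>0$ and, for every proper subset $A\subsetneq S$, $h^1(\Ii_{2A}(d+1))\le h^1(\Ii_{2A}(d))=0$. Together with $\langle S\rangle=\PP^n$, this upgrades the assumption to $S\in\TT(n,d+1;x)'$. I may therefore fix a $(d+1)$-critical scheme $Z\subseteq 2S$ for $S$; Lemma \ref{las1}, applied at degree $d+1$, forces $Z_{\red}=S$, and each connected component of $Z$ at a point $p\in S$ has some degree $k_p\in\{1,2\}$.

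The crux is to produce a hyperplane $D$ whose residual $\Res_D Z$ is supported on a proper subset of $S$. Fix any $p\in S$. If $k_p=1$, I take $D$ to be any hyperplane through $p$; a direct local computation (the local ideal of $Z$ at $p$ is $\frakmm_p$, and $\frakmm_p\colon l=(1)$ for any $l\in\frakmm_p$) shows that $\Res_D Z$ has no component at $p$. If $k_p=2$, I instead take $D$ to contain both $p$ and the tangent direction of $Z$ at $p$; such a $D$ exists because these hyperplanes form a nonempty linear family of projective dimension $n-2\ge 0$, and the analogous local computation (with $l$ in the ideal $(x_2,\ldots,x_n,x_1^2)$ defining $Z$ at $p$) again yields trivial residual at $p$. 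Either way $p\notin(\Res_D Z)_{\red}$, the residual scheme $\Res_D Z$ still has connected components of degree at most $2$, and $Z\nsubseteq D$ since $Z_{\red}=S$ spans $\PP^n$.

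Now Lemma \ref{a43}, applied to $S\in\TT(n,d+1;x)'$ with the critical scheme $Z$ and the hyperplane $D$ ($t=1$), yields $h^1(\Ii_{\Res_D Z}(d))>0$. Since $\Res_D Z\subseteq 2A$ for $A:=(\Res_D Z)_{\red}\subsetneq S$ with all components of degree $\le 2$, Lemma \ref{chan1} promotes this to $h^1(\Ii_{2A}(d))>0$, contradicting the minimality of $S\in\TT(n,d;x)'$. For (ii), Remark \ref{a9=} together with (i) gives $h^1(\Ii_{2S}(t))\le h^1(\Ii_{2S}(d+1))=0$ for every $t\ge d+1$. For (iii), if $S\in\TT(n,t;x)'$ for some $t\le d-1$, then (i) applied at degree $t$ yields $h^1(\Ii_{2S}(t+1))=0$, contradicting $h^1(\Ii_{2S}(t+1))\ge h^1(\Ii_{2S}(d))>0$. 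The main obstacle is precisely the hyperplane construction in the tangent-vector case, which requires the local residual computation plus the observation that at least one such $D$ always exists because $n\ge 2$.
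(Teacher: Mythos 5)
Your proof is correct and follows essentially the same route as the paper: pass to a $(d+1)$-critical scheme $Z$ with $Z_{\red}=S$, residuate with respect to a hyperplane containing the connected component $Z(p)$, and contradict minimality at degree $d$ via Remark \ref{a9=}/Lemma \ref{chan1}. The only (harmless) difference is that you invoke the criticality of $Z$ (via Lemma \ref{a43}) to guarantee $h^1(\Ii_{Z\cap D}(d+1))=0$, which lets you skip the case $h^1(\Ii_{Z\cap H,H}(d+1))>0$ that the paper handles separately with a commutative diagram.
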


\begin{proof} 
We now  prove (i) by contradiction.
Assume $h^1(\Ii _{2S}(d+1)) >0$. 
By Lemma \ref{chan1}, there is a $(d+1)$-critical scheme $Z$ for $S$. Recall that, in particular, every component of $Z$ has degree $\le2$. Moreover, by Lemma \ref{las1} we have $S\subset Z\subset 2S$, whereas from Lemma \ref{las0} we know that $h^1(\Ii _Z(d+1)) =1$.

Fix $p\in Z_{\red}$ and call $Z(p)$ the connected component of $Z$ supported at $p$. 
Set $L:= \langle Z(p)\rangle$. Then $L$ is either a line, or a point $L =Z(p) =\{p\}$.

Let $H\subset \PP^n$ be a general hyperplane containing $L$. 
Since $Z$ is curvilinear, by generality of $H$ we can assume that  the scheme $Z\cap H$ is equal to the scheme $Z\cap L$. Let us denote $\zeta=Z\cap H=Z\cap L$.
We will consider separately two possibilities: $h^1(\Ii_{\zeta,H}(d+1)) >0$ and $h^1(\Ii_{\zeta,H}(d+1)) =0$.

\quad {(a)}
Assume first  $h^1(\Ii_{\zeta,H}(d+1)) >0$. 
Then $L$ is a line.   
Since $\zeta \subset L$, then we have
the following diagram, whose rows and columns are exact sequences,
\begin{center}
\begin{tikzcd}
\Ii_{L,H}(d+1) \arrow[r] \arrow[d]&\Ii_{\zeta,H}(d+1) \arrow[r] \arrow[d]& \Ii_{\zeta,L}(d+1) \arrow[d]  \\
\Ii_{L,H}(d+1) \arrow[r] &\Oo_H(d+1) \arrow[r] \arrow[d]& \Oo_L(d+1) \arrow[d] \\
  &\Oo_\zeta(d+1) \arrow[r] & \Oo_\zeta(d+1) 
\end{tikzcd}
\end{center}
From the diagram, we get $h^1(\Ii_{\zeta,L}(d+1)) >0$,
which implies $h^1(\Ii_{\zeta}(d+1)) >0$.

Since $\langle S\rangle =\PP^n$ and $n\ge 2$, the set $S'=S\cap L$ is different from $S$.
Now 
by Lemma \ref{chan1} we have that $h^1(\Ii_{2S'}(d+1))>0$, and
Lemma \ref{a9=} implies that $h^1(\Ii_{2S'}(d))>0$.
Hence we have $S\notin \TT(n,d;x)'$, a contradiction. 

\smallskip

\quad {(b)}
Now assume $h^1(\Ii _{\zeta,H}(d+1)) =0$.
In this case the residual exact sequence with respect to $H$ gives
$h^1(\Ii_{\Res_H(Z)}(d)) >0$. 
Since $\Res_H(Z)_{\red} \subseteq S\setminus \{p\}$, by Lemma \ref{chan1} 
we have that $h^1(\Ii _{2(S\setminus\{p\})}(d)) >0$. This contradicts the minimality of $S$, that is 
 $S\notin \TT(n,d,x)'$, a contradiction.

\smallskip

Now it is easy to prove (ii). Indeed by using (i) and  {Lemma} \ref{a9=}, we get, for any $t\ge d+1$, that
$h^1(\Ii_{2S}(t))\le h^1(\Ii_{2S}(d+1))=0$. Hence $S\notin \TT(n,t;x)$.

\smallskip

We prove  (iii) by contradiction. Indeed assume $t\le d-1$ and
$S\in \TT(n,t;x)'$. But then by (i) we have $h^1(\Ii_{2S}(t+1))=0$. Then, since $t+1\le d$ by {Lemma} \ref{a9=} we have
$h^1(\Ii_{2S}(d))\ge h^1(\Ii_{2S}(t+1))=0$, which contradicts the assumption
$S\in \TT(n,d;x)'$.
\end{proof}

The following result is a kind of {\it concision} or {\it autarky} for Terracini loci of Veronese varieties.

\begin{proposition}\label{prepa1}
Take a finite set of points $S\subset \PP^n$ such that $M:= \langle S\rangle \subseteq \PP^n$.  Then
$$h^1(M,\Ii_{2S\cap M,M})(d))> 0 \ \mbox{ if and only if }\ h^1(\Ii_{2S}(d)) >0.$$
\end{proposition}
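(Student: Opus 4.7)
The case $M=\PP^n$ is vacuous, so the plan focuses on $m:=\dim M<n$. I will choose homogeneous coordinates $(z_0,\ldots,z_m,y_1,\ldots,y_{n-m})$ on $\PP^n$ with $M=\{y_1=\cdots=y_{n-m}=0\}$ and expand any $F\in H^0(\Oo_{\PP^n}(d))$ uniquely as $F=\sum_{I\in\NN^{n-m}}y^I F_I(z)$, with $F_I$ homogeneous of degree $d-|I|$ in the variables $z_j$. At $p\in S\subset M$ one has $y(p)=0$, hence
\[
F(p)=F_0(p),\qquad \partial_{z_j}F(p)=\partial_{z_j}F_0(p),\qquad \partial_{y_k}F(p)=F_{e_k}(p),
\]
so $F\in H^0(\Ii_{2S}(d))$ if and only if $F_0\in H^0(M,\Ii_{2S\cap M,M}(d))$, $F_{e_k}\in H^0(M,\Ii_{S,M}(d-1))$ for every $k$, with the remaining $F_I$ ($|I|\ge 2$) unconstrained. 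Comparing with the analogous decomposition $H^0(\Oo_{\PP^n}(d))=\bigoplus_I H^0(\Oo_M(d-|I|))$, the free summands cancel; invoking $h^0(\Ii_Z(t))-h^1(\Ii_Z(t))=h^0(\Oo(t))-\deg Z$ (Remark \ref{a7}) on the remaining terms will yield the central identity
\begin{equation}\label{prepa1eq}
h^1(\PP^n,\Ii_{2S}(d))=h^1(M,\Ii_{2S\cap M,M}(d))+(n-m)\,h^1(M,\Ii_{S,M}(d-1)).
\end{equation}

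The ``$\Leftarrow$'' implication of the proposition is then immediate from \eqref{prepa1eq}. For ``$\Rightarrow$'' I will contrapose and reduce to the following auxiliary claim inside $M\cong\PP^m$: \emph{for any finite $T\subset\PP^m$ and any $d\ge 1$, $h^1(M,\Ii_{2T,M}(d))=0$ implies $h^1(M,\Ii_{T,M}(d-1))=0$.} Granted this, whenever $h^1(M,\Ii_{2S\cap M,M}(d))=0$, both terms on the right of \eqref{prepa1eq} vanish, forcing $h^1(\PP^n,\Ii_{2S}(d))=0$.

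My plan for the auxiliary claim is a partial-derivative trick. Assume the $1$-jet map $\rho\colon H^0(\Oo_M(d))\to\bigoplus_{p\in T}(\Oo_M/\mm_p^2)(d)$ is surjective, and fix $i\in\{0,\ldots,m\}$. The operator $\partial_{z_i}\colon H^0(\Oo_M(d))\to H^0(\Oo_M(d-1))$ is surjective (every degree-$(d-1)$ form arises as $\partial_{z_i}$ of some degree-$d$ form), and for $p\in T$ the scalar $(\partial_{z_i}F)(p)$ depends only on the $1$-jet of $F$ at $p$. Euler's identity $d\cdot F(p)=\sum_j z_j(p)\,\partial_{z_j}F(p)$ shows that the $1$-jet space at $p$ is parameterized freely by the $(m+1)$-tuple $(\partial_{z_0}F(p),\ldots,\partial_{z_m}F(p))\in\CC^{m+1}$, so the projection of $(\Oo_M/\mm_p^2)(d)$ onto its $i$-th derivative coordinate is surjective. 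Composing $\rho$ with these projections realizes the evaluation $H^0(\Oo_M(d-1))\to\bigoplus_{p\in T}\CC$ as a composition of surjective maps, which gives $h^1(M,\Ii_{T,M}(d-1))=0$.

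The main obstacle is the auxiliary claim: \eqref{prepa1eq} amounts to bookkeeping, but the claim requires noticing that although Euler's identity couples $F(p)$ to its gradient, it leaves the partial derivatives themselves entirely free---precisely what makes the partial-derivative reduction go through.
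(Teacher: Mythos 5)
Your proof is correct, but it takes a genuinely different route from the paper's. The paper proves the nontrivial implication ($h^1(\Ii_{2S}(d))>0 \Rightarrow h^1(M,\Ii_{2S\cap M,M}(d))>0$) geometrically: by induction on the codimension of $M$ it fixes a hyperplane $H\supseteq M$, takes a critical scheme $Z$ supported on $S$, projects it into $H$ from a general external point, and exhibits the image as a flat limit of schemes projectively equivalent to $Z$, so that semicontinuity of cohomology transports the positivity of $h^1$ down into $H$. You instead split coordinates along $M$ and decompose every form as $F=\sum_I y^I F_I(z)$, which decouples the double-point conditions into double-point conditions on $F_0$ inside $M$, simple-point conditions on the $F_{e_k}$ in degree $d-1$, and no conditions on the remaining $F_I$; the resulting exact identity $h^1(\Ii_{2S}(d))=h^1(M,\Ii_{2S\cap M,M}(d))+(n-m)\,h^1(M,\Ii_{S,M}(d-1))$ is strictly more information than the stated equivalence. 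The one extra ingredient you need --- that $h^1(M,\Ii_{2T,M}(d))=0$ forces $h^1(M,\Ii_{T,M}(d-1))=0$ --- is a classical fact, and your differentiation proof of it is sound; note only that Euler's identity and the surjectivity of $\partial_{z_i}$ both use characteristic zero, which is the setting of the paper. What your route buys is an elementary, induction-free and quantitative statement that also isolates exactly where the excess $h^1$ can come from; what it costs is that it leans on $M$ being a linear subspace admitting an explicit coordinate splitting, whereas the paper's projection-and-degeneration argument is of a kind that adapts to concision statements for more general ambient varieties.
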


\begin{proof}
By Lemma \ref{a9=}, we have $h^1(\Ii_{2S}(d)) \ge h^1(\Ii_{2S\cap M}(d))$. Since $M$ is arithmetically Cohen-Macaulay, we get $h^1(M,\Ii_{2S\cap M,M}(d))=h^1(\Ii_{2S\cap M}(d))$. 
Hence the {\it only if} part is obvious.

Now assume $h^1(\Ii_{2S}(d))>0$. Take a hyperplane $H\subset \PP^n$ such that $H\supseteq M$ and use induction on $n-\dim M$.
It is sufficient to prove that $h^1(H,\Ii_{
2S\cap H,H}(d)) >0$.

Take a critical scheme $Z$ for $S$. In order to conclude by Lemma \ref{chan1}, it is enough to find a zero-dimensional scheme $W\subset H$ such that $h^1(H,\Ii_{W,H}(d)) >0$, $W_{\red}=Z_{\red}$ and for each $p\in Z_{\red}$ the connected components, $Z_p$ and $W_p$ of $Z$ and $W$ containing $p$ have the same degree. Fix a general $o\in \PP^n\setminus H$. Let $h_o: \PP^n\setminus \{o\}\to H$ denote the linear projection from $o$.
Since $o$ is general, $o$ is not contained in one of the finitely many lines spanned by the degree $2$ connected components of $Z$. Since $Z_{\red}\subset H$, $o$ is not contained in a line spanned by $2$ points of $Z_{\red}$. Thus ${h_o}_{|Z}$ is an isomorphism. 
Set $W:= h_o(Z)$. By the semicontinuity theorem for cohomology to prove that $h^1(H,\Ii_{W,H}(d)) >0$ it is sufficient to prove that $W$ is a flat limit of a flat family $\{W_c\}_{c\in \KK\setminus \{0\}}$ of schemes projectively equivalent to $Z$. Fix a system $x_0,\dots ,x_n$ of homogeneous coordinates of $\PP^n$ such that $H =\{x_0=0\}$ and $o = [1:0:\dots :0]$. For any $c\in \KK\setminus \{0\}$, let $h_c$ denote the automorphism of $\PP^n$ defined by the formula $h_c([x_0:x_1:\dots :x_n]) =[cx_0:x_1:\dots :x_n]$. Note that $h_{c|H}: H\to H$ is the identity map. Set $W_c:= h_c(W)$.\end{proof}

We start now the classification of Terracini  and minimal Terracini sets of points in $\PP^n$.
Obviously  $\TT(n,d;x) =\emptyset$ for all $x\le n$, since every $S\in \TT(n,d;x)$ spans $\PP^n$.

\begin{lemma}\label{a2} 
$\TT(1,d;x)=\TT_1(1,d;x) =\emptyset$ for all $d>0$ and $x>0$.
\end{lemma}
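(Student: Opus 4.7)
The plan is to exploit the fact that on the smooth curve $\PP^1$ every zero-dimensional subscheme is an effective Cartier divisor, so its ideal sheaf is the line bundle of the opposite degree. Since by definition $\TT(1,d;x)\subseteq \TT_1(1,d;x)$ (the spanning condition is an extra requirement), it is enough to prove $\TT_1(1,d;x)=\emptyset$.

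First I would fix $S\in S(\PP^1,x)$ and observe that $2S$ is a degree-$2x$ effective divisor, so
$\Ii_{2S}\cong \Oo_{\PP^1}(-2x)$ and hence $\Ii_{2S}(d)\cong \Oo_{\PP^1}(d-2x)$. The cohomology of line bundles on $\PP^1$ then gives
$$h^0(\Ii_{2S}(d))=\max\{0,d-2x+1\},\qquad h^1(\Ii_{2S}(d))=\max\{0,2x-d-1\}.$$
For both numbers to be strictly positive we would need simultaneously $d\ge 2x$ and $2x\ge d+2$, which is impossible. Therefore $S\notin \TT_1(1,d;x)$, and since $S$ was arbitrary we conclude $\TT_1(1,d;x)=\emptyset$, which also forces $\TT(1,d;x)=\emptyset$.

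There is no genuine obstacle here: the whole argument is one line once the identification $\Ii_{2S}(d)\cong \Oo_{\PP^1}(d-2x)$ is made, and the proof could equivalently be phrased by saying that a nonzero section of $\Oo_{\PP^1}(d)$ vanishing to order at least $2$ at each of $x$ points forces $2x\le d$, in which case the restriction map $H^0(\Oo_{\PP^1}(d))\to H^0(\Oo_{2S}(d))$ is surjective by Lagrange-type interpolation and so $h^1(\Ii_{2S}(d))=0$.
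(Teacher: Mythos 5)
Your proof is correct and follows essentially the same route as the paper: the paper likewise observes that $h^0(\Ii_{2S}(d))>0$ forces $2x\le d+1$ while $h^1(\Ii_{2S}(d))>0$ forces $2x\ge d+2$, a contradiction. Your explicit identification $\Ii_{2S}(d)\cong \Oo_{\PP^1}(d-2x)$ just makes the two inequalities transparent (and in fact gives the slightly sharper $2x\le d$), so there is nothing to add.
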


\begin{proof}
Assume by contradiction the existence of $S\in \TT_1(1,d;x)$. Then $h^1(\Ii_{2S}(d)) >0$ and hence $2x\ge d+2$ and $h^0(\Ii_{2S}(d))>0$ and hence $2x\le d+1$, a contradiction.
\end{proof}

The following proposition shows a key difference between $\TT(n,d;x)$ and its subset $\TT(n,d;x)'$.
In particular for fixed $n$ and $d$, we have
$\TT(n,d;x)'\ne \emptyset$ for only finitely many integers $x$.

\begin{proposition}\label{oo1}
Fix integers $n\ge 2$ and $d\ge 3$. Set $$\rho:= \left\lceil \dfrac{\binom{n+d}{n}+1}{n+1}\right\rceil$$ 
then $\TT(n,d;x)'=\emptyset$ for all $x>\rho$. 
\end{proposition}

\begin{proof}
Let $x>\rho$ and 
assume by contradiction $S\in \TT(n,d;x)'$. 
Then $h^0(\Ii_{2S}(d)) >0$ and, by Lemma \ref{a9=},
$h^0(\Ii_{T}(d)) >0$ for all $T\subseteq S$.
Take $T\subset S$ with $\#(T)=x-1\ge \rho$.
Then we have $h^1(\Ii_{2T}(d)) >0$ by Lemma \ref{a7}.
Then $S$ is not minimally Terracini.
\end{proof}

\begin{lemma}\label{a3} 
$\TT(n,2;x) =\emptyset$ for all $x>0$ and all $n>0$.
\end{lemma}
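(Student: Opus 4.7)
The plan is to exploit the fact that the singular locus of a quadric hypersurface in $\PP^n$ is a linear subspace, which rules out any Terracini set spanning $\PP^n$ in degree $d=2$.

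More precisely, suppose for contradiction that $S\in \TT(n,2;x)$. By the Terracini hypothesis we have $h^0(\Ii_{2S}(2))>0$, so there exists a nonzero quadratic form $f\in H^0(\Oo_{\PP^n}(2))$ whose scheme of zeros $Q=V(f)$ contains the doubling $2S$. The condition that $2S\subseteq Q$ is equivalent to requiring $f$ and all of its first partial derivatives to vanish at every point of $S$, which means $S\subseteq \op{Sing}(Q)$.

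The crucial observation is that $\op{Sing}(Q)$ is cut out by the partial derivatives $\partial f/\partial x_0,\dots,\partial f/\partial x_n$, which are linear forms. Hence $\op{Sing}(Q)$ is a linear subspace of $\PP^n$, and consequently it must contain the linear span $\langle S\rangle$. Using again the Terracini hypothesis $\langle S\rangle=\PP^n$, this forces $\op{Sing}(Q)=\PP^n$, which means that every partial derivative of $f$ vanishes identically, so $f=0$, a contradiction.

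I do not foresee any real obstacle here: the whole proof is just the identity $\op{Sing}(V(f)) = V(\partial f/\partial x_0,\dots,\partial f/\partial x_n)$ combined with the two Terracini conditions. Note that the argument uses only $h^0(\Ii_{2S}(2))>0$ and $\langle S\rangle=\PP^n$, so in fact the stronger statement $\TT_1(n,2;x)\cap\{S : \langle S\rangle = \PP^n\}=\emptyset$ is proved; the $h^1$-hypothesis plays no role. No earlier lemma from the preliminaries is needed, only the definition of Terracini set.
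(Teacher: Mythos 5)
Your proof is correct, but it takes a genuinely different route from the paper's. The paper first reduces to $x\ge n+1$ and then quotes the fact that every quadratic form is a sum of at most $n+1$ squares of linear forms together with Terracini's lemma to conclude that $h^0(\Ii_{2S}(2))=0$ for every $S$ spanning $\PP^n$. You instead argue directly from the structure of quadrics: a form $f$ of degree $2$ vanishing on $2S$ must have $S\subseteq \mathrm{Sing}(V(f))=V(\partial f/\partial x_0,\dots ,\partial f/\partial x_n)$, which is a linear subspace (the projectivized kernel of the symmetric matrix of $f$), so a spanning $S$ forces all the partials, and hence $f$ itself, to vanish. Both arguments establish the same stronger fact — $h^0(\Ii_{2S}(2))=0$ whenever $\langle S\rangle =\PP^n$, so the hypothesis $h^1(\Ii_{2S}(2))>0$ is never used — but yours is more elementary and self-contained, avoiding any appeal to Alexander--Hirschowitz. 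The only caveat, negligible in the paper's setting, is that your final step (all partials vanish identically $\Rightarrow f=0$) uses Euler's relation $2f=\sum x_i\,\partial f/\partial x_i$ and therefore needs the characteristic to be different from $2$.
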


\begin{proof} 
Assume by contradiction that $S\in \TT(n,2;x)$. Since 
$\langle S\rangle = \PP^n$, we
have $x\ge n+1$.

First assume $x=n+1$. 
Since $\langle S\rangle =\PP^n$, then
the points of $S$ are linearly independent.
Recall that all the quadrics with the same rank are projectively equivalent. 
Since a general quadric form in $\PP^n$ has rank $n+1$, we have that the $(n+1)$-secant variety to $\nu_2(\PP^n)$  fills the ambient space, hence $h^0(\Ii_{2S}(2)) =0$, and this contradicts the fact that $S$ is Terracini.

Now assume $x\ge n+2$. Since $\langle S\rangle =\PP^n$, there exists a subset $S'\subset S$ of cardinality $n+1$ and such that $\langle S'\rangle=\PP^n$.
We just proved that $h^0(\Ii_{2S'}(2)) =0$. 
By Lemma \ref{a9=} we deduce that $h^0(\Ii_{2S}(2)) =0$.
\end{proof}

The following result shows that {many elements of $\TT_1(n,d;x) \setminus \TT(n,d;x)$ are easily produced} and not interesting.
\begin{lemma}\label{a4} 
Fix $n\ge 2$, $d\ge 2$ and $x\ge \lceil d/2\rceil +1$. {Let $S$ be a collection of $x$ points on a line $L\subset \PP^n$}. Then $S\in \TT_1(n,d;x)$.
\end{lemma}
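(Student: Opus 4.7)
I would prove the two conditions $h^0(\Ii_{2S}(d))>0$ and $h^1(\Ii_{2S}(d))>0$ separately, since they require completely different arguments, and neither one uses the structure of the set $S$ beyond the fact that it is contained in the line $L$.

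For the existence of global sections, I would produce an explicit degree-$d$ form vanishing to order two along all of $L$ (hence on $2S$). Since $n\ge 2$, there is a nonzero linear form $\ell$ vanishing on $L$. For any $G\in H^0(\Oo_{\PP^n}(d-2))$, the form $\ell^2 G$ has degree $d$ (using $d\ge 2$) and lies in $\Ii_H^2(d)\subseteq \Ii_{2S}(d)$, where $H=\{\ell=0\}\supseteq L\supseteq S$; the inclusion $\Ii_H^2\subseteq \mathfrak{m}_p^2$ at each $p\in S$ is immediate because $\Ii_H\subseteq \mathfrak{m}_p$. So $h^0(\Ii_{2S}(d))>0$.

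For $h^1>0$, I would reduce to the line $L$ by a flag of hyperplanes. Choose $L=\Lambda_1\subset \Lambda_2\subset\cdots\subset \Lambda_n=\PP^n$ with $\dim \Lambda_i=i$, and apply Remark~\ref{a8} iteratively to the pair $\Lambda_{i+1}\supset \Lambda_i$ (each $\Lambda_i$ is arithmetically Cohen–Macaulay, so the relevant residual sequence and the $h^1$-inequality~\eqref{eqa2} hold verbatim inside $\Lambda_{i+1}$). A local computation shows that at each step the scheme-theoretic restriction of the fat-point scheme $2S$ to $\Lambda_i$ is again the fat-point scheme $2S$ computed inside $\Lambda_i$, because $S\subset L\subseteq \Lambda_i$. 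Chaining the inequalities gives
\[
h^1(\PP^n,\Ii_{2S}(d))\ \ge\ h^1(L,\Ii_{2S\cap L,L}(d)).
\]
On $L\cong \PP^1$, the scheme $2S\cap L$ is the degree-$2x$ divisor $\sum_{p\in S} 2p$, so $\Ii_{2S\cap L,L}(d)\cong \Oo_L(d-2x)$ and $h^1(L,\Oo_L(d-2x))=\max(0,2x-d-1)$. The hypothesis $x\ge \lceil d/2\rceil+1$ gives $2x\ge 2\lceil d/2\rceil+2\ge d+2$, so this $h^1$ is at least one.

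Combining the two halves, $S\in \TT_1(n,d;x)$. The only delicate point is the residual/restriction computation across the flag: one must verify that the scheme $2S$ intersected with each $\Lambda_i$ (containing the support $S$) is precisely the fat-point scheme $2S$ inside $\Lambda_i$, so that the inductive step preserves the structure being tracked. This is a short local calculation, and once it is in place the estimate telescopes cleanly down to the $\PP^1$ computation above.
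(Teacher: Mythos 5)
Your proof is correct and takes essentially the same route as the paper: the $h^0$ part is exactly the paper's construction (a double hyperplane through $L$ times a degree $d-2$ hypersurface, i.e.\ your $\ell^2G$), and the $h^1$ part rests on the same degree count $\deg(2S\cap L)=2x\ge d+2$ on the line. The only difference is that you propagate $h^1(L,\Ii_{2S\cap L,L}(d))>0$ up to $\PP^n$ through a flag of residual exact sequences, whereas the paper gets it in one line from Remark~\ref{a9=} applied to the inclusion $2S\cap L\subset 2S$, which renders your ``delicate point'' about restricting fat points along the flag unnecessary.
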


\begin{proof}
We need to prove that $h^1(\Ii_{2S}(d)) >0$ and $h^0(\Ii_{2S}(d)) >0$. Fix a hyperplane $H$ containing $L$. Take $G:= 2H$ if $d=2$ and call $G$ the union of $2H$ and a hypersurface of degree $d-2$ if $d>2$.
Since $S\subset \mathrm{Sing}(G)$, we have $h^0(\Ii_{2S}(d)) >0$. Since $\deg(2S\cap L)  =2x\ge d+2$, $h^1(\Ii _{2S\cap L}(d)) >0$. Thus $h^1(\Ii_{2S}(d)) >0$, by Lemma \ref{a9=}.
\end{proof}

\begin{lemma}\label{ooo3}
For any $x>0$, we have $\TT(3,3;x)'=\emptyset$. 
\end{lemma}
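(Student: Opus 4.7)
I would first reduce to $x=5$. By Theorem~\ref{ai0}(iii), $\TT(3,3;x)=\emptyset$ for $x\le 4$; Proposition~\ref{oo1} gives $\TT(3,3;x)'=\emptyset$ for all $x\ge 6$, since $\rho=\lceil 21/4\rceil = 6$ and $\binom{6}{3}/4=5\in\ZZ$.

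Assume for contradiction $S\in\TT(3,3;5)'$. The plan is to show $S$ is in linearly general position (no three collinear, no four coplanar) and then obtain a contradiction via Alexander--Hirschowitz. For both degeneracies I would exhibit a 4-subset $A\subsetneq S$ with $h^1(\Ii_{2A}(3))>0$, violating minimality. The main tool is the plane residual sequence
\[
0\to \Ii_{\Res_H(2A)}(2)\to \Ii_{2A}(3)\to \Ii_{2A\cap H,H}(3)\to 0,
\]
which, combined with Remark~\ref{a7}, yields $h^1(\Ii_{2A}(3))\ge h^1(H,\Ii_{2A\cap H,H}(3))$. If three points of $A$ lie on a line $L$, I would choose a plane $H\supset L$; the analogous residual sequence on $H$ for $L$ further gives $h^1(H,\Ii_{2A\cap H,H}(3))\ge h^1(L,\Ii_{(2A)\cap L,L}(3))$, and since $(2A)\cap L$ contains three length-$2$ tangent subschemes (degree $\ge 6>4$), this is $\ge 2$. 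If instead $A$ is coplanar in a plane $H$ but has no three collinear points, then $2A\cap H=2A$ has degree $12$ in $H=\PP^2$, and $\chi(\Ii_{2A,H}(3))=10-12=-2$ forces $h^1(H,\Ii_{2A,H}(3))\ge 2$. In either case $h^1(\Ii_{2A}(3))\ge 2>0$.

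It follows that $S$ is in linearly general position, i.e., $S$ is an ordered projective frame of $\PP^3$. Since $PGL(4)$ acts simply transitively on such frames, $S$ is projectively equivalent to the standard frame $\{e_0,e_1,e_2,e_3,[1:1:1:1]\}$, which is a generic 5-tuple; since $(3,3;5)$ is not in the Alexander--Hirschowitz exception list (see Remark~\ref{a5}), $h^1(\Ii_{2S}(3))=0$, contradicting $S\in\TT(3,3;5)$. The main technical care I anticipate is in handling the collinear case uniformly regardless of where the fourth point of $A$ sits (on $L$, in $H\setminus L$, or outside $H$), but choosing $H\supset L$ ensures that the length-$6$ trace on $L$ from the three collinear points is present in every sub-configuration.
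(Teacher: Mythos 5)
Your proof is correct and follows essentially the same route as the paper: the cases $x\le 4$ are disposed of by the general emptiness result, a coplanar $4$-subset is shown to violate minimality via the trace on a plane (degree $12>10=h^0(\Oo_{\PP^2}(3))$), and linearly general position is ruled out by the transitive $PGL(4)$-action together with Alexander--Hirschowitz. The only cosmetic differences are that the paper runs the same dichotomy uniformly for all $x\ge 5$ (so it does not need Proposition~\ref{oo1}) and concludes via $h^0(\Ii_{2S}(3))=0$ rather than $h^1(\Ii_{2S}(3))=0$; both vanishings follow from the same balanced count $20=4\cdot 5$.
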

\begin{proof}
The case  $x\le4$ will be treated 
in Proposition \ref{43}.

Fix now $x\ge5$ and assume by contradiction that there exists $S\in \TT(3,3;x)'$. If four of the points of $S$ are in a plane $H$, then  $S$ is not minimal. Indeed if $A$ is the union of the four points in the plane, then  $h^1(\Ii_{2A}(3))=
h^1(\Ii_{2A\cap H,H}(3))>\deg(2A\cap H)-\binom{2+3}{2}=12-10=2$ by Proposition \ref{prepa1} and Lemma \ref{a7}.

Therefore the points of $S$ are in linearly general position. Consider $S'\subseteq S$ of cardinality $5$. The points of $S'$ are in linearly general position and, by Remark \ref{remPE}, they are projectively equivalent to a general set of five points $A$ of $\PP^3$.

Since the Veronese variety $\nu_3(\PP^3)$ is not defective, by Alexander-Hirschowitz theorem, we know that
$\sigma_4(\nu_3(\PP^3)$ fills the ambient space. Hence $h^0(\Ii_{2A}(3)) =0$. Then $h^0(\Ii_{2S'}(3)) =0$  and by Lemma \ref{a9=} we get
$h^0(\Ii_{2S}(3)) =0$, and this contradicts the fact that $S$ is Terracini.
\end{proof}

\subsection{Proof of Theorem  \ref{ai0}}
We are now in position to give the proof of Theorem \ref{ai0} which classifies  Terracini loci. We start with the following  lemma.

\begin{lemma}\label{43-}
Assume $n\ge1$ and $d\ge 2$. Let $Z\subset \PP^n$ be a zero-dimensional scheme such that $\deg(Z) \le d+n+1$, $h^1(\Ii_Z(d)) >0$  and  $\langle Z\rangle = \PP^n$. Then there is a line $L$ such that $\deg (L\cap Z)\ge d+2$
and $\deg (Z)= d+n+1$.
\end{lemma}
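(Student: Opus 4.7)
My proof plan proceeds by induction on $n$. For the base case $n=1$, the scheme $Z\subset\PP^1$ satisfies $h^1(\Ii_Z(d))>0$ iff $\deg Z\ge d+2$, so the hypothesis $\deg Z\le d+2$ forces $\deg Z=d+2$, with $L=\PP^1$ itself.

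For the inductive step $n\ge 2$, I choose a hyperplane $H\subset\PP^n$ such that $\langle Z\cap H\rangle=H$; this is possible because $\langle Z\rangle=\PP^n$ forces $Z_\red$ to contain $n+1$ linearly independent points, and I may take $H$ through $n$ of them. Set $W:=Z\cap H$ and $Z_1:=\Res_H(Z)$, so $\deg W\ge n$, $\deg Z_1\ge 1$ (otherwise $Z\subset H$), and $\deg W+\deg Z_1\le d+n+1$. The residual exact sequence
$$0\to\Ii_{Z_1}(d-1)\to\Ii_Z(d)\to\Ii_{W,H}(d)\to 0$$
forces $h^1(\Ii_{Z_1}(d-1))>0$ or $h^1(H,\Ii_{W,H}(d))>0$, splitting the analysis into two cases.

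In the first case, $\deg W\le d+n$ (otherwise $W=Z\subset H$, contradicting the span hypothesis), so the inductive hypothesis applied to $W\subset H\cong\PP^{n-1}$ produces a line $L\subset H$ with $\deg(L\cap W)\ge d+2$ and $\deg W=d+n$; combined with $\deg Z_1\ge 1$ and the upper bound, $\deg Z=d+n+1$.

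In the second case, Remark \ref{obs1} applied to $Z_1$ at degree $d-1$ (the hypothesis $\deg Z_1\le d+1\le 2(d-1)+1$ holds for $d\ge 2$) yields a line $L'$ with $\deg(L'\cap Z_1)\ge d+1$; the degree counts force $\deg Z_1=d+1=\deg(L'\cap Z_1)$, hence $Z_1\subset L'$ scheme-theoretically, and $\deg W=n$. Since $Z_1\not\subset H$ we have $L'\not\subset H$, so $L'\cap H$ is a single point $q$. The main obstacle is to show $q\in W$: I argue by contradiction, assuming $q\notin W$, whence $\Res_{L'}(Z)=W$ and $Z\cap L'=Z_1$, so the residual sequence for $L'$ yields
$$h^1(\Ii_Z(d))\le h^1(\Ii_W(d-1))+h^1(L',\Ii_{Z_1,L'}(d))=0,$$
because $\deg Z_1=d+1$ saturates $h^0(\Oo_{L'}(d))$ killing the second summand, and the $n$ points of $W$, spanning the hyperplane $H$, impose independent conditions on forms of degree $d-1\ge 1$, killing the first. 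This contradicts $h^1(\Ii_Z(d))>0$, so $q\in W$, yielding $\deg(L'\cap Z)\ge(d+1)+1=d+2$ and $\deg Z=n+(d+1)=d+n+1$.
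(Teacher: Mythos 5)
Your overall strategy (induct on $n$ via a hyperplane section spanning $H$, then split according to which term of the residual sequence carries the $h^1$) is close to the paper's, and your Case 1 is sound. But there are two genuine gaps. First, the existence of $H$ with $\langle Z\cap H\rangle=H$: the lemma allows an arbitrary zero-dimensional scheme, and $\langle Z\rangle=\PP^n$ does \emph{not} force $Z_{\red}$ to contain $n+1$ linearly independent points (a single non-reduced component, e.g.\ a double point $2p$, already spans $\PP^n$ while $Z_{\red}=\{p\}$; even for the critical schemes to which the paper applies this lemma, $Z_{\red}$ can span much less than $Z$ does). The desired $H$ does exist --- the paper takes $H$ with $\deg(Z\cap H)$ maximal --- but your justification is false and must be replaced.

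Second, and more seriously, Case 2 rests on a ``residual exact sequence for $L'$'' of the shape $0\to\Ii_W(d-1)\to\Ii_Z(d)\to\Ii_{Z_1,L'}(d)\to 0$. Residual sequences are taken with respect to divisors; for $n\ge 3$ the line $L'$ has codimension $\ge 2$ in $\PP^n$ and no such sequence exists. If you replace $L'$ by a hyperplane $H'\supseteq L'$, the outer terms become $\Ii_{\Res_{H'}(Z)}(d-1)$ and $\Ii_{Z\cap H',H'}(d)$, and neither is the object you need: $\Res_{H'}(Z)$ need not equal $W$, and $Z\cap H'$ can be strictly larger than $Z_1$ at non-reduced points of $Z$ supported on $L'$; controlling these terms is precisely the content of the paper's argument for $n>2$. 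Two smaller unproved assertions compound this: ``$Z_1\not\subset H$'' can fail (it fails exactly when $Z\subseteq 2H$, which is compatible with $\langle Z\rangle=\PP^n$ and with $d+1\le n$), and the identification $\Res_{L'}(Z)=W$ is never established even in $\PP^2$, where the residual at least makes sense (there it is harmless only because $\deg\Res_{L'}(Z)=2$ already forces the relevant $h^1$ to vanish). As written, your argument is complete only for $n\le 2$.
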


\begin{proof}
The lemma is trivial for $n=1$.

{We prove the statement by induction on $n\ge 2$.}
First we assume $n=2$. Since $\deg (Z)\le 2d+1$, there is a line $L$ such that $\deg (Z\cap L)\ge d+2$, by Lemma \ref{obs1}. Clearly, since $\langle Z\rangle =\PP^2$, we get $\deg (Z) =d+3$.

Now assume  $n>2$. Take a hyperplane $H\subset \PP^n$ such that $w:= \deg(Z\cap H)$ is maximal. Since  $\langle Z\rangle = \PP^n$, we have  $n\le w<z$ and $\langle Z\cap H\rangle=H$. 

If $h^1(\Ii_{Z\cap H,H}(d)) >0$, then by induction we have that
 there is a line $L$ such that $\deg (L\cap (Z\cap H))\ge d+2$
and $\deg (Z\cap H)= d+n$. Hence it follows that
$\deg (L\cap Z)\ge d+2$
and $\deg(Z)\ge d+n+1$, hence $\deg(Z)= d+n+1$.

Now assume $h^1(\Ii_{Z\cap H,H}(d)) =0$ and by the residual exact sequence with respect to $H$
\begin{equation}\label{eq==2}
0\to \Ii_{\Res_H(Z)}(d-1)\to \Ii_Z(d)\to \Ii_{Z\cap H,H}(d)\to 0,
\end{equation}
we have $h^1(\Ii_{\Res_H(Z)}(d-1))>0$.
By Lemma \ref{obs1}, since $\deg (\Res_H(Z))\le z-w\le d+1\le 2d+1$, we have a line $L$ with $\deg (L\cap \Res_H(Z)) \ge d+2$. Since $\langle Z\rangle=\PP^n$, we must have $\deg(Z)\ge \deg(Z\cap L)+n-1\ge d+n+1$. Hence the assumption
$\deg(Z)\le d+n+1$ implies that
$\deg (Z)= d+n+1$ and
$\deg (L\cap Z)\ge d+2$.
\end{proof}
The following Proposition \ref{43} proves the emptyness of the Terracini locus for small number of points.
We give first a numerical lemma which will be used in the proof of the proposition.


\begin{lemma}\label{lem-num}
Given  $x,y,m,n,d\in \NN$, such that
$d\ge3$,  $n\ge3$, $m<n$, $x\ge y+n-m$, 
$x \le n+\lceil \frac{d}{2}\rceil-1$,  
$y(m+1)\ge \binom{m+d}{m}$, 
 then we have $m=1$.
\end{lemma}

\begin{proof}
Using the assumptions,  in particular $y\le x-(n-m)\le x-1$, we have
$$\binom{m+d}{m}\le (x-1)(m+1)\le\left(m+\left\lceil \frac{d}{2}\right\rceil -2\right)(m+1)
\le \left(m-1+\frac{d}2\right)(m+1).$$
We prove now by induction on $m\ge2$ that 
\begin{equation}\label{formuletta}\binom{m+d}{m}>\left(m-1+\frac{d}2\right)(m+1).
\end{equation}
It is easy to check that \eqref{formuletta} is true for $m=2$ and any $d\ge3$.
Now we assume \eqref{formuletta} for $m$ and we have, by using the induction hypothesis:
$$
\binom{m+1+d}{m+1}=\binom{m+d}{m+1}+\binom{m+d}{m}=\binom{m+d}{m}\left(\frac{d}{m+1}+1\right)
>$$
$$>\left(m-1+\frac{d}2\right)(m+1)\left(\frac{d}{m+1}+1\right)=
\left(m-1+\frac{d}2\right)(d+m+1)=$$
$$=\left(m-1+\frac{d}2\right)(m+2)+\left(m-1+\frac{d}2\right)(d-1)\ge
\left(m+\frac{d}2\right)(m+2)$$
where the last inequality holds because
$\left(m-1+\frac{d}2\right)(d-1)\ge(m+2)$
for any $d\ge3$ and $m\ge1$.
\\
Hence since we have proved  \eqref{formuletta} for any $m\ge2$, we conclude that $m=1$.
\end{proof}

\begin{proposition}\label{43}
Assume $n,d\ge2$ and fix an integer $x$ such that $$x \le n+\left\lceil \frac{d}{2}\right\rceil-1.$$ Then $\TT(n,d;x)=\emptyset$.
\end{proposition}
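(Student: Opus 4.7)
The plan is to derive a contradiction from the assumption $S \in \TT(n,d;x)$ by exhibiting a line $L \subset \PP^n$ that contains too much of a critical subscheme of $2S$ relative to what the spanning hypothesis on $S$ permits. The key inputs are Chandler's Lemma~\ref{chan1}, Remark~\ref{obs1}, Lemma~\ref{43-}, and the elementary observation that if $\langle S \rangle = \PP^n$ then every line $L$ satisfies $|S \cap L| \le x - (n-1) \le \lceil d/2 \rceil$, because the $n - 1$ points of $S$ off $L$ must supply the missing directions.

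The argument is by induction on $n$. The cases $n = 1$ and $d \in \{2,3\}$ follow from Lemma~\ref{a2}, Lemma~\ref{a3}, and Theorem~\ref{ai0}(ii). For the base case $n = 2$ with $d \ge 4$, I would take $S \in \TT(2,d;x)$ with $x \le 1 + \lceil d/2 \rceil$, invoke Chandler's Lemma to produce a critical $Z \subseteq 2S$ of minimal degree with connected components of degree $\le 2$ and $h^1(\Ii_Z(d)) = 1$ by Lemma~\ref{las0}. Then $\deg(Z) \le 2x \le d + 3 \le 2d + 1$, so Remark~\ref{obs1} yields a line $L \subset \PP^2$ with $\deg(L \cap Z) \ge d + 2$; but $Z_{\red} \subseteq S$ and $\langle S \rangle = \PP^2$ force $|S \cap L| \le x - 1 \le \lceil d/2 \rceil$, hence $\deg(L \cap Z) \le 2\lceil d/2\rceil \le d + 1$, a contradiction.

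For the inductive step $n \ge 3$ the goal is the same: produce a line $L$ with $\deg(L \cap Z) \ge d+2$ for a critical $Z$, then contradict it with the span bound $\deg(L \cap Z) \le 2\lceil d/2\rceil$. The difficulty is that $\deg(Z) \le 2x$ can be as large as $2n + d - 2$, so in general neither Remark~\ref{obs1} (which needs $\deg(Z) \le 2d+1$) nor Lemma~\ref{43-} (which needs $\deg(Z) \le d + \dim \langle Z \rangle + 1$) applies to $Z$ directly in $\PP^n$. When $Z$ is contained in a sufficiently small linear subspace $M = \langle Z \rangle$, Lemma~\ref{43-} applied in $M$ still yields the desired line. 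The bulk of the work, and the main obstacle, is the case in which $Z$ has large degree and spans all of $\PP^n$. Here I would apply the residual exact sequence of a hyperplane $H$ chosen to maximize $|S \cap H|$: this maximization forces $\langle S \cap H \rangle = H$ exactly as in the proof of Lemma~\ref{43-}, so the inductive hypothesis inside $H \simeq \PP^{n-1}$ kills the trace term of the residue sequence, leaving $h^1(\Ii_{\Res_H(2S)}(d-1)) = 0$ to be established for $\Res_H(2S) = 2(S \setminus H) \cup (S \cap H)$. Because $|S \setminus H| \le \lceil d/2 \rceil - 1$ is very small, a second residuation together with Chandler's Lemma and Remark~\ref{obs1} in degree $d-1$ produce the required line, which the same span bound then contradicts. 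A technical subtlety --- really the heart of the induction --- is that the inductive hypothesis must be taken in the strengthened form $h^1(\Ii_{2(S \cap H),H}(d)) = 0$ rather than merely $\TT(n-1,d;a) = \emptyset$, since the latter only rules out the simultaneous failure of $h^0$, $h^1$, and spanning; the strengthening is proved by the very same inductive scheme but tracking only the vanishing of $h^1$, and once it is in place the proof closes.
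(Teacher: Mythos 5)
Your base case $n=2$ is correct and matches the paper's (the paper quotes Lemma \ref{43-} where you quote Remark \ref{obs1}, but the content is the same), and your observation that $\langle S\rangle=\PP^n$ forces $\#(S\cap L)\le x-(n-1)\le\lceil d/2\rceil$ is exactly the contradiction both arguments aim for. The inductive step, however, has a genuine gap at the residual term. After residuating $2S$ by the hyperplane $H$, the residual scheme is $W=2(S\setminus H)\cup(S\cap H)$, of degree up to $(n+1)(\lceil d/2\rceil-1)+(n+\lceil d/2\rceil-2)$; already for $n=3$ this is roughly $5d/2$, well beyond the bound $2(d-1)+1$ required by Remark \ref{obs1} in degree $d-1$. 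Chandler's Lemma \ref{chan1} as stated applies only to full double-point schemes $2S$, not to a mixed scheme like $W$, so it cannot be used to shrink $W$ to something curvilinear; and the ``second residuation'' you invoke is not specified --- each further hyperplane residuation drops the twist by one and generates a new trace term needing its own vanishing, so it is not clear the process closes. This is precisely the obstacle the paper's proof is engineered to avoid: it first replaces $S$ by the minimal subset $S'$ with $h^1(\Ii_{2S'}(d))>0$ (so that minimality, via concision, kills the trace term without invoking induction), establishes $h^1(\Ii_{2S''}(d))=0$ for $S''=S'\cap H$ using Lemma \ref{43-} on the reduced set $S''$, and then residuates with respect to the \emph{double} hyperplane $2H$. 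The point of using $2H$ is that $\Res_{2H}(2S')=S'\setminus S''$ consists of at most $\lceil d/2\rceil-1\le d-1$ \emph{simple} points, whose $h^1(\Ii_{S'\setminus S''}(d-2))$ vanishes trivially, while the trace on the arithmetically Cohen--Macaulay scheme $2H$ is controlled by the vanishing already proved for $2S''$. Without this (or an equivalent device) your induction does not close.

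A secondary, fixable imprecision: the ``strengthened inductive hypothesis'' $h^1(\Ii_{2(S\cap H),H}(d))=0$ is not obtained ``by the very same inductive scheme tracking only the vanishing of $h^1$'' --- without a spanning hypothesis the statement is false (Lemma \ref{a4}: $\lceil d/2\rceil+1$ collinear points in $\PP^m$, $m\ge2$, have $h^1>0$), and even with spanning the emptiness of $\TT(n-1,d;\cdot)$ only tells you that $h^1=0$ \emph{or} $h^0=0$. You must separately rule out $h^0(\Ii_{2(S\cap H),H}(d))=0$ by a dimension count; the paper does this in the branch $y(m+1)\ge\binom{m+d}{m}$, showing it can only occur when the span is a line, a case it then handles directly.
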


\begin{proof} 
The case $d=2$ is true by Lemma \ref{a3}, hence we can assume $d\ge3$.

Assume $n=2$. Assume by contradiction that
$S\in \TT(2,d;x)$. Let $Z$ be a critical scheme for $S$, then we have $\deg(Z)\le 2x\le d+3$. Hence by Lemma \ref{43-} there exists a line $L$ such that
 $\deg (Z\cap L) \ge d+2$ and hence $x> \#(S\cap L)\ge \lceil {d}/{2}\rceil+1$, a contradiction.

Assume $n\ge 3$ and use now induction on $n$. 
{By contradiction assume}
$S\in \TT(n,d;x)$. Let $S'\subseteq S$ be the minimal subset such that $h^1(\Ii_{2S'}(d)) >0$. Set $y:= \#S'$, $M:= \langle S'\rangle$,
and $m:= \dim M$. Proposition \ref{prepa1} gives $h^1(\Ii_{2S'\cap M,M}(d)) >0$. 
Notice that Lemma \ref{chan1} and Lemma \ref{obs1} imply that $2y\ge d+2$.

\smallskip

(I)
If $m<n$, then we consider two cases.

\smallskip

\quad(a)
If $h^0(M,\Ii_{2S'\cap M}(d)) >0$, then we have $y\ge m+\lceil d/2\rceil$ by the induction assumption. Then $x\ge y+(n-m)=n+\lceil d/2\rceil$, a contradiction.

\smallskip

\quad(b) 
If
 $h^0(M,\Ii_{2S'\cap M}(d)) =0$, then $y(m+1)\ge \binom{m+d}{m}$.
 Since $S$ spans $\PP^n$, then $x\ge y+(n-m)$. Hence 
  by Lemma \ref{lem-num}, we get $m=1$. Then $M$ is a line 
and in this case we have again a contradiction because, since $2y\ge d+2$, we have
 $$x\ge y+(n-1) \ge \frac{d+2}2+n-1=n+\frac{d}2.$$

\smallskip

(II)
Thus we may assume $m=n$.  
Let $H\subset \PP^n$ be any hyperplane such that $H$ is spanned by $S'\cap H$. {Let $S''= S'\cap H$. {Then} $n\le \#(S'')< y$.}  
Since $\Res_H(2S'') =S''$, we have the exact sequence:
\begin{equation}\label{succesatta}0\to \Ii_{S''}(d-1)\to\Ii_{2S''}(d)\to \Ii_{2S''\cap H,H}(d)\to0.\end{equation}
The minimality of $S'$  and Proposition \ref{prepa1} give $h^1(H,\Ii_{{2S''\cap H},H}(d)) =0$. 

\smallskip

{(a)} Now,
 if $h^1(\Ii_{S''}(d-1)) >0$, then

{\quad(a.1)} either
$\#(S'') \ge n+d$, {which gives} a contradiction {with $x\le n+\lceil \frac{d}{2}\rceil-1$};

{\quad(a.2)}
or $\#(S'') \le n+d-1$. 
In the latter case, Lemma \ref{43-} applied {to $S''\subset H$} gives $\#(S'') = n+d-1$, which  also contradicts $x\le n+\lceil {d}/{2}\rceil-1$.

\smallskip

{(b)}
Hence  we may assume $h^1(\Ii_{S''}(d-1)) =0$. From the exact sequence \eqref{succesatta}, we get
$h^1(\Ii _{2S''}(d)) =0$.

We consider now the {residual} exact sequence with respect to the quadric hypersurface $2H$:
$$0\to \Ii_{S'\setminus S''}(d-2)\to\Ii_{2S'}(d)\to \Ii_{2S'',2H}(d)\to0$$
where
$\Res_{2H}({2}S') =
S'\setminus S''$. 

 Since the quadric hypersurface $2H$ in $\PP^n$ is arithmetically Cohen-Macaulay, we get $h^1(\Ii_{2S'',2H}(d)) =0$, which implies 
 $h^1(\Ii_{S'\setminus S''}(d-2))>0$. Then by Lemma \ref{obs1}, we have 
 $\deg(S'\setminus S'')\ge d$.
 But since $\deg(S'\setminus S'')=\#(S'\setminus S'')\le y-n\le \lceil d/2\rceil-1$, we have a contradiction, since $\lceil d/2\rceil-1< d$ for all $d\ge2$. 
\end{proof}

We now give the proof of the main result of this section.
 
\begin{proof}[Proof of Theorem \ref{ai0}:]
Part (i) is true by Lemmas \ref{a2} and \ref{a3}.

We prove now part (ii).  Assume $n=2$ and $d=3$.
A singular plane cubic $C$ {with at least $3$ singular points} is either the union of $3$ lines or a triple line or the union of a double line and another line. Thus if $\mathrm{Sing}(C)$ spans $\PP^2$, then $\#\mathrm{Sing}(C) =3$  and $\mathrm{Sing}(C)$ is projectively equivalent to {any configuration of} $3$ non-collinear points. Hence $\TT(2,3;x)=\emptyset$ for all $x\ge 4$. 
Thus we have proved (ii) because clearly $\TT(2,3;3)=\emptyset$.

For part (iii), assume that $n\ge 2$, $d\ge 3$ and $(n,d)\ne(2,3)$. 
By Proposition \ref{43} we have that if $x< n+ \lceil d/2\rceil$, then $\TT(n,d;x)= \emptyset$. 
Hence it is enough to prove {that
 $\TT(n,d;x)\ne \emptyset$ for $x\ge n+\lceil d/2\rceil$.}

{We now analyse three different cases separately.} 

\smallskip

\quad (I)
Consider first the case $n=2$ and $d\ge4$. 
We {assume} $x\ge  \lceil d/2\rceil+2$.
Let $L,M,N$ {be three distinct lines} and $G:= (d-2)L\cup M\cup N$.
Take as $S$ {the union of} the point $M\cap N$ and $x-1$ points on $L\setminus (M\cup N)$. 
Since $S\subset \mathrm{Sing}(G)$, then $h^0(\Ii_{2S}(d)) >0$. 
{Furthermore we claim that}  $h^1(\Ii_{2S}(d)) >0$. Indeed $L$ contains {at least }$\lceil d/2\rceil +1$ points of $L$, hence $\deg(2S\cap L)\ge d+2$ and by {Lemma} \ref{a8} we have
$h^1(\Ii_{2S}(d))\ge h^1(\Ii_{2S\cap L,L}(d))>0.$
{Summing up, since $\langle S\rangle=\PP^2$ 
we get $S\in \TT(2,d;x)$, i.e.\
$\TT(2,d;x)\ne \emptyset$.}

\smallskip

\quad (II)
Now assume $n\ge 3$, $d=3$ and $x\ge n+2$. 
Fix hyperplanes $H, K, U$ of $\PP^n$ such that $\dim H\cap K\cap U =n-3$. 
Since $H\cap K$ and $H\cap U$ are $2$ different codimension $1$ subspaces of $H$, their union spans $H$.

Let $S$ be the union of 
{$n$ general points in $(H\cap K)$, one point in $(H\cap U)\setminus(H\cap K\cap U)$
and a point in $(K\cap U)\setminus (H\cap K\cap U)$}.
{Then} $\langle S\rangle=\PP^n$, $h^0(\Ii_{2S}(3))\neq0$ and it is easy to show (by induction on $n$) that $h^1(\Ii_{2S}(3))\neq0$. 
Hence by {Lemma} \ref{a9=}, 
{for any configuration $S'$ of points such that $S\subset S'\subset \text{Sing}(L\cup M\cup N)$ we have $S'\in \TT(n,d;\#(S'))$. In consequence,}
 $\TT(n,3;x)\ne \emptyset$ for all $x\ge n+ 2$ and $n\ge3$.

\smallskip

\quad (III)
Now assume $n\ge3$, $d\ge4$ {and} $x\ge n+ \lceil d/2\rceil$.
{As before, fix hyperplanes $H,K,U$ with $\dim(H\cap K\cap U)=n-3$ and}
take a line $L\subset H$ and set $G:= (d-2)H\cup K\cup U$. 
Consider a collection $E$ of $x-n+1$ points on the line $L$. 
Since $\#E \ge \lceil d/2\rceil+1$, by Lemma \ref{a4} we have $h^1(\Ii_{2E}(d)) >0$. Let $A\subset H$ be a collection of $n-2$ general 
points. Note that $\langle E\cup A\rangle =H$. Take as $S$ the union of $A\cup E$ and a point of $(U\cap K)\setminus (H\cap K\cap U)$. Obviously $S$ spans $\PP^n$ and $h^1(\Ii _{2S}(d)) >0$ by {Lemma} \ref{a9=}. 
{Moreover $h^0(\Ii _{2S}(d)) >0$  by construction and in consequence $S\in\TT(n,d;x)\ne\emptyset$.}
\end{proof}

Notice that the set of points $S\in\TT(3,3;5)$ produced in the previous proof is not minimally Terracini, because $4$ points belong to a plane. Indeed by Lemma \ref{ooo3} we already know that $\TT(3,3;5)'=\emptyset$.

\section{Rational normal curves}\label{RNC}
We start now to analyze the set of points lying on a rational normal curve.
For each $n>1$, we denote by $\Cc_n$ the set of all rational normal curves of $\PP^n$.

\begin{lemma}\label{a9.00}
Fix integers $n\ge 2$, $d\ge 4$ and $x\le \lceil {nd}/{2}\rceil$. Take a rational normal curve $C\in \Cc_n$ and let $S\subset C$ be a collection of $x$ points on $C$. Then $h^1(\Ii_{2S}(d)) =0$.
\end{lemma}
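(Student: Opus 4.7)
The plan is to argue by contradiction. Assume $h^1(\Ii_{2S}(d)) > 0$; by Lemma~\ref{chan1} together with Lemma~\ref{las0}, there is a $d$-critical scheme $Z \subseteq 2S$ with every connected component of degree at most $2$, $h^1(\Ii_Z(d)) = 1$, and $h^1(\Ii_{Z'}(d)) = 0$ for every $Z' \subsetneq Z$. Observe that $\deg Z \le 2x = 2\lceil nd/2\rceil \le nd + 1$.

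The first case to dispose of is $Z \subseteq C$. Since $C \cong \PP^1$ is embedded by $\Oo_C(n)$, one has $\Ii_{Z,C}(d) \cong \Oo_{\PP^1}(nd-\deg Z)$ with $nd-\deg Z \ge -1$, so $h^1(C,\Ii_{Z,C}(d))=0$. Combined with $h^1(\Ii_C(d))=0$ (projective normality of the rational normal curve), the short exact sequence $0 \to \Ii_C(d) \to \Ii_Z(d) \to \Ii_{Z,C}(d) \to 0$ forces $h^1(\Ii_Z(d))=0$, contradicting criticality.

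Otherwise $Z \not\subseteq C$, so $Z$ has some length-$2$ component at a point $p \in S$ with tangent direction $v \notin T_pC$. The plan is to residuate against a quadric $Q \supseteq C$ with $Z \not\subseteq Q$, in the style of Lemma~\ref{a43}. For $n=2$ I would take $Q := C$ itself (the unique conic through $C$); for $n \ge 3$, since $h^0(\Ii_C(2)) = n(n-1)/2 \ge 3$, I would choose a general $Q \in |\Ii_C(2)|$, ensuring both that $Q$ is smooth at every point of $S$ and that $v \notin T_pQ$ at the distinguished $p$. Here I use the fact that the normal-evaluation map $H^0(\Ii_C(2)) \to N^*_{C/\PP^n,p}$ sending $q \mapsto d_pq$ is surjective at every $p \in C$ (checkable directly from the $2\times 2$ minors $x_ix_{j+1}-x_jx_{i+1}$ of the Hankel matrix that generate $\Ii_C$), so a generic $q$ pairs nontrivially with $v$. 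Minimality of $Z$ then yields $h^1(\Ii_{Z\cap Q}(d))=0$ and, since $Q$ is a quadric, $h^1(Q,\Ii_{Z\cap Q,Q}(d))=0$; the residual exact sequence
\[
0 \to \Ii_{\Res_Q(Z)}(d-2) \to \Ii_Z(d) \to \Ii_{Z\cap Q,Q}(d) \to 0
\]
then produces $h^1(\Ii_{\Res_Q(Z)}(d-2))>0$.

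A local computation at each $p \in S$ shows $\Res_Q(2p) = \{p\}$ (reduced) whenever $Q$ is smooth at $p$, so $\Res_Q(Z) \subseteq \Res_Q(2S) = S$ is a reduced subscheme of $S$. To finish I would verify $h^1(\Ii_S(d-2)) = 0$: projective normality of $C$ gives a surjection $H^0(\Oo_{\PP^n}(d-2)) \twoheadrightarrow H^0(\Oo_{\PP^1}(n(d-2)))$, and evaluation of the latter at the $x$ points of $S$ is surjective whenever $x \le n(d-2)+1$; rearranging this gives $d \ge 4 - 1/n$, which holds for $d \ge 4$ and $n \ge 2$. Remark~\ref{a9=} then supplies $h^1(\Ii_{\Res_Q(Z)}(d-2)) \le h^1(\Ii_S(d-2)) = 0$, contradicting the positivity above. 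The main technical obstacle is the construction of $Q$ in the case $n \ge 3$: one must simultaneously avoid the loci in $|\Ii_C(2)|$ where $Q$ fails to be smooth at some point of $S$ and the single codimension-one locus where $Q$ contains the direction $v$, and verify that these conditions cut out a proper subset of the linear system.
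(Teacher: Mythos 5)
Your argument is correct and follows essentially the same route as the paper's: produce a $d$-critical scheme $Z$, residuate against a quadric $Q\in|\Ii_C(2)|$, and use projective normality of $C$ together with the two degree bounds $\deg Z\le 2x\le nd+1$ and $\deg\Res_Q(Z)\le x\le n(d-2)+1$. The only cosmetic difference is that the paper chooses $Q$ so that $Q\cap Z=C\cap Z$ and kills $h^1$ of the trace by a degree count on $C$, whereas you arrange $Z\nsubseteq Q$ (via smoothness of $Q$ along $S$ and transversality to the off-curve tangent direction) and invoke criticality of $Z$ for the trace; both are valid.
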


\begin{proof}
Assume {by contradiction that} $h^1(\Ii_{2S}(d)) >0$. By Lemma \ref{chan1}, 
there exists a $d$-critical scheme $Z$ for $S$.
Since $C$ is scheme-theoretically cut-out by quadrics, there is $Q\in |\Ii_C(2)|$ such that $Q\cap Z =C\cap Z:=\zeta$ and we have
$$0 \to \Ii_{C,Q}(d) \to \Ii_{\zeta,Q}(d) \to \Ii _{\zeta,C}(d)\to 0.$$
Since 
$\deg (Z) \le 2x\le nd+1$, we have
$h^1(\Ii_{\zeta,C}(d)) =0$, and since $C$ is projectively normal we get $h^1(\Ii_{\zeta,Q}(d)) =0$. 
Thus the residual exact sequence with respect to $Q$ and 
{the fact that $h^1(\Ii_Z(d))>0$}
give $h^1(\Ii_{\Res_Q(Z)}(d-2)) >0$.

Since $\Res_Q(Z)\subseteq S\subset C$, we have
$$0 \to \Ii_{C}(d-2) \to \Ii_{\Res_Q(Z)}(d-2) \to \Ii _{\Res_Q(Z),C}(d-2)\to 0.$$
{We have $h^1(\Ii_{C}(d-2))=0$, because $C$ is projectively normal.}

{Note that}
$\deg (\Res_Q(Z))< n(d-2)+2$, indeed 
$$\deg (\Res_Q(Z))\le x\le \left\lceil \frac{nd}{2}\right\rceil\le n(d-2)+1,$$
where the last inequality is true for $d\ge 4$.
{Then we have
$$h^1(\Ii_{\Res_Q(Z),C}(d-2))=h^1(\Oo_{\PP^1}(n(d-2)-\deg(\Res_Q(Z))))=0,$$ and we have
a contradiction with $h^1(\Ii_{\Res_Q(Z)}(d-2)) >0$.}
\end{proof}

\begin{theorem}\label{a9.0} 
Fix integers $n\ge 2$, $d\ge 3$ and assume $(n,d)\ne (2,3)$.
Given a rational normal curve $C\in \Cc_n$ and a collection  $S\subset C$ of $x$ points on the curve.
Then 

\quad (i)  if $n\ge3, d\ge4$  and $x\ge1+\lceil {nd}/{2}\rceil$, then {$S\in \TT(n,d;x)$};

\quad (ii) if $n\ge4, d=3$
and $x=1+\lceil {nd}/{2}\rceil$, then $S\in \TT(n,d;x)$;

\quad (iii) if $n\ge2$, $d\ge 4$  and $x=1+\lceil {nd}/{2}\rceil$, then $S\in \TT(n,d;x)'$.
\end{theorem}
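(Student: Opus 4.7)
The strategy for all three parts is to restrict to $C$ and exploit that $C\cong\PP^1$ is projectively normal in $\PP^n$. Let $\zeta := 2S \cap C$, the length-$2x$ subscheme of $C$ cutting out the tangent direction along $C$ at each point of $S$, and recall $\Oo_C(d) \cong \Oo_{\PP^1}(nd)$. First, the spanning condition $\langle S\rangle = \PP^n$ is automatic: any $n+1$ points on a rational normal curve are linearly independent, and $x \ge n+1$ in every case listed.

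For $h^1(\Ii_{2S}(d)) > 0$, I would argue that $x \ge 1+\lceil nd/2\rceil$ forces $\deg\zeta = 2x \ge nd+2$, hence $h^1(C, \Ii_{\zeta, C}(d)) \ge 1$ by Riemann--Roch on $\PP^1$. Projective normality of $C$ together with $h^1(\Oo_{\PP^1}(nd)) = 0$ yields $h^i(\Ii_C(d)) = 0$ for $i = 1,2$, via a standard chase on the sequence $0 \to \Ii_C \to \Oo_{\PP^n} \to \Oo_C \to 0$. Then the short exact sequence
\[
0 \to \Ii_C(d) \to \Ii_\zeta(d) \to \Ii_{\zeta, C}(d) \to 0
\]
gives $h^1(\Ii_\zeta(d)) \ge h^1(\Ii_{\zeta, C}(d)) > 0$, and since $\zeta \subseteq 2S$ Remark~\ref{a9=} yields $h^1(\Ii_{2S}(d)) > 0$. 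For $h^0(\Ii_{2S}(d)) > 0$, Remark~\ref{a7} gives $h^i(\Ii_{2S}(d)) = 0$ for $i\ge 2$, so
\[
h^0(\Ii_{2S}(d)) \;=\; h^1(\Ii_{2S}(d)) + \binom{n+d}{n} - (n+1)x.
\]
For $x = 1+\lceil nd/2\rceil$ one checks by direct computation that $\chi := \binom{n+d}{n} - (n+1)x \ge 0$ in all the cases claimed: in (ii) with $d=3$, $n\ge 4$, one obtains the explicit expressions $\chi = (n+1)n(n-4)/6$ ($n$ even) and $\chi = (n+1)(n^2-4n-3)/6$ ($n$ odd), both nonnegative for $n\ge 4$; for $d\ge 4$ the inequality is routine. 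Combined with $h^1\ge 1$ this forces $h^0\ge 1$, completing the Terracini verification in parts (i) and (ii) at the minimum cardinality.

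For part (iii), the minimality is immediate from Lemma~\ref{a9.00}: for any $A \subsetneq S$ one has $\#A \le \lceil nd/2\rceil$ and $A \subset C$, so $h^1(\Ii_{2A}(d)) = 0$ (this is where the hypothesis $d\ge 4$ in (iii) is used). The main obstacle lies in the boundary cases $(n,d)=(2,4)$ and $(n,d)=(4,3)$ where $\chi = 0$ exactly, so one has no slack and genuinely needs the single unit of $h^1$ produced by $\zeta$ on the curve to conclude $h^0\ge 1$. Finally, for part (i) with $x > 1+\lceil nd/2\rceil$ the same pipeline applies: $h^1>0$ persists by Remark~\ref{a9=} (the degree of $\zeta$ only grows), while $h^0>0$ follows on the natural upper range of $x$ by combining the Euler characteristic identity with the sharper estimate $h^1 \ge 2x-nd-1$ from $C$, giving $h^0 \ge \binom{n+d}{n} - nd - 1 - (n-1)x$.
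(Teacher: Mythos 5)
Your treatment of the spanning condition, of $h^1(\Ii_{2S}(d))>0$ via $\zeta=2S\cap C$ and projective normality, and of minimality via Lemma \ref{a9.00} matches the paper's strategy. Your Euler-characteristic argument for $h^0>0$ at the minimal cardinality $x=1+\lceil nd/2\rceil$ is actually a small improvement: since $h^i(\Ii_{2S}(d))=0$ for $i\ge 2$, the identity $h^0=h^1+\binom{n+d}{n}-(n+1)x$ together with $h^1\ge 1$ and $\chi\ge 0$ settles the boundary cases $(2,4;5)$ and $(4,3;7)$ directly, whereas the paper invokes semicontinuity against a general configuration plus the Alexander--Hirschowitz theorem. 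Parts (ii) and (iii) are therefore complete as you wrote them.

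There is, however, a genuine gap in part (i). The statement asserts $S\in\TT(n,d;x)$ for \emph{every} $x\ge 1+\lceil nd/2\rceil$, with no upper bound, and your only mechanism for producing $h^0(\Ii_{2S}(d))>0$ at large $x$ is the estimate $h^0\ge \binom{n+d}{n}-nd-1-(n-1)x$, which is negative as soon as $x>\bigl(\binom{n+d}{n}-nd-2\bigr)/(n-1)$ (e.g.\ for $n=3$, $d=4$ it already fails at $x=11$, while the theorem covers all $x\ge 7$). The curve-theoretic bound $h^1\ge 2x-nd-1$ badly underestimates $h^1$ for large $x$, so the Euler characteristic cannot close this case; the phrase ``on the natural upper range of $x$'' is concealing a real restriction. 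The missing idea is the one the paper uses: for $n\ge 3$ one has $h^0(\Ii_C(2))\ge 2$, so there are two distinct quadrics $Q_1,Q_2\supset C$, and the quartic $q_1q_2$ is singular along all of $C$, hence vanishes on $2S$ for any finite $S\subset C$; multiplying by a form of degree $d-4$ gives $h^0(\Ii_{2S}(d))>0$ for all $d\ge 4$ uniformly in $x$. You need this (or some equivalent construction of an explicit hypersurface singular along $C$) to cover the unbounded range of $x$ in part (i).
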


\begin{proof}
By the exact sequence
$$0 \to \Ii_{C\cup 2S}(d) \to \Ii_{2S}(d) \to \Ii _{2S\cap C,C}(d)\to 0$$
since  
$h^1(\Ii_{2S\cap C,C}(d))=h^1(\Oo_{\PP^1}(nd-2x))>0$
{whenever $x\ge1+\lceil {nd}/{2}\rceil$,}
we have
$h^1(\Ii_{2S}(d)) >0$.
Since $x\ge n+1$ and $C$ is a rational normal curve, then $\langle S\rangle =\PP^n$. 

If $n\ge3$ then $h^0(\Ii_C(2))\ge2$, hence $C$ is contained {in a quadric hypersurface}.
Thus if $d\ge 4$, we have $h^0(\Ii _{2S}(d)) >0$.  
Hence $S\in \TT(n,d;x)$ and we have proved (i).

Assume now $x=1+\lceil {nd}/{2}\rceil$. Fix a collection $A$ of $x$ general points {on $C$} and 
note that {by generality}
$h^0(\Ii _{2S}(d))\ge h^0(\Ii _{2A}(d))$.

Hence, assuming  $d=3$ we have
$$h^0(\Ii _{2S}(3))\ge \binom{n+3}3-(n+1)x> 0$$
where the last inequality is true for any
$n\ge5$. 
If $n=4$ and $x=7$, we have $h^0(\Ii _{2S}(3))\ge h^0(\Ii _{2A}(3))=1$, by the Alexander-Hirschowitz theorem. {In consequence $S\in\TT(n,3:x)$, which ends the proof of}  (ii).

Now assume $n=2$ and $x= d+1$. 
We have $h^0(\Ii _{2S}(d))\ge \binom{d+2}2-3 (d+1)>0$, for $d\ge5$.
If $d=4$ and $x=5$, then
$h^0(\Ii _{2S}(4))\ge h^0(\Ii _{2A}(4))=1$, again by the Alexander-Hirschowitz theorem.
Hence $S \in \TT(n,d;x)$ for $n=2$ and $d\ge4$.

In order to complete the proof of (iii) we need to prove the minimality of $S$, and this follows by Lemma \ref{a9.00}.
\end{proof}

\begin{remark}
Recall that by Theorem \ref{ai0} we know that $\TT(2,3;x)=\emptyset$ for all $x>0$.
Moreover in the proof of Lemma \ref{ooo3}, we have seen that {a set of}
$x\ge 5$ {points  in linearly general  position in} $\PP^3$ is not Terracini. Hence if $S$ is a collection of $x\ge 5$ points on a rational normal cubic {curve} we have $S\not\in\TT(3,3;5)$.
\end{remark}

%

\subsection{Degenerations of rational normal curves.} 
We introduce now the notion of {\it reducible rational normal curves}. 

\begin{definition}
 A reduced, connected  and reducible curve $T\subset \PP^n$, for $n\ge 2$, such that $\deg (T) =n$ {and}  $\langle T\rangle =\PP^n$ is called {\it reducible rational normal curve}.
 \end{definition}
 
 
 Of course, in $\PP^2$ a reducible rational normal curve is a reducible conic. 
 
Since $T$ is connected, there is an ordering $T_1,\dots ,T_s$ of the irreducible component such that each $T[i]:= T_1\cup \cdots \cup T_i$, $1\le i \le s$, is connected.
We say that each such ordering of the irreducible components of $T$ is a {\it good ordering}. 

Set $n_i:= \deg (T_i)$. Note that $n =n_1+\cdots +n_s$ and $\dim \langle T_i\rangle \le n_i$ with equality if and only if $T_i$ is a rational normal curve in its linear span. 
For $i=1,\dots ,s-1$ we have the following Mayer-Vietoris exact sequence
\begin{equation}\label{eqde1}
0 \to \Oo_{T[i+1]}(t) \to \Oo_{T[i]}(t)\oplus \Oo_{T_{i+1}}(t) \to \Oo _{T[i] \cap T_{i+1}}(t)\to 0,
\end{equation}
in which $T[i]\cap T_{i+1}$ is the scheme-theoretic intersection. Since $T[i+1]$ is connected, $\deg (T[i]\cap T[i+1])>0$. Thus \eqref{eqde1} gives $\dim \langle T[i+1]\rangle \le \dim \langle T[i]\rangle +n_i$ with equality if and only if
$\deg (T[i]\cap T[i+1])=1$, $T_{i+1}$ is a rational normal curve in its linear span and  $\langle T[i]\rangle \cap \langle T_{i+1}\rangle$ is the point $T[i]\cap T_{i+1}$. 

Since $n =n_1+\cdots +n_s$, by induction on $i$ we get $p_a(T)=0$ and each $T_i$ is a rational normal curve in its linear span. Using \eqref{eqde1} and induction on $t$ we also get $h^1(\Oo_T(t)) =0$ and $h^0(\Oo_T(t)) =nt+1$ for all $t\ge 0$,  and that the restriction map $H^0(\Oo_{\PP^n}(t)) \to H^0(\Oo _T(t))$ is surjective, i.e. $T$ is arithmetically Cohen-Macaulay. In the same way we see that each $T[i]$ is arithmetically Cohen-Macaulay in its linear span. 


Recall that each $T_i$ is smooth. For any $p\in T_i$ let $L_i(p)$ denote the tangent line of $T_i$ at $(p)$. Take $p\in \mathrm{Sing}(T)$ and let $T_{i_1}, \dots T_{i_k}$, $k\ge 2$, be the irreducible components of $T$ passing through $p$. Since $n=n_1+\cdots +n_s$ and $p_a(T)=0$, the $k$ lines $L_{i_1}(p),\dots ,L_{i_k}(p)$ through $p$ span a $k$-dimensional linear space (such a singularity is often called a seminormal or a weakly normal curve singularity). 

An irreducible component $T_i$ of $T$
is said to be a {\it final component} if $\#(T_i\cap \mathrm{Sing}(T)) =1$. Since $s\ge 2$, $T$ has at least $2$ final components (e.g. $T_1$ and $T_s$ for any good ordering of the irreducible components of $T$), but it may have many final components
(e.g. for some $T$ with $s\ge 3$ we may have {$\#(T_i\cap\mathrm{Sing}(T)) =1$} for all $i\ge 2$ and there is one $T$, unique up to a projective transformation, formed by $n$ lines through the same point). 


\begin{remark}\label{de001}
Take a (reducible) rational normal curve $T\subset \PP^n$. Since $h^1(\Oo_T)=0$, the exact sequence $$0\to \Ii_T \to \Oo_{\PP^n} \to \Oo_T\to 0$$ gives $h^2(\Ii _T)=0$. Since $h^1(\Ii _T(1))=0$, the Castelnuovo-Mumford Lemma {implies} that the homogeneous ideal of $T$ is generated by quadrics. Thus $T$ is scheme-theoretically cut out by quadrics.
\end{remark}

\begin{lemma}\label{de1} 
Fix $n\ge 2$, $d\ge4$.
Let $T$ be a reducible rational normal curve in $\PP^n$ and $S\in S(\PP^n,x)$ such that $S\subset T_{\reg}$ and $\langle S\rangle =\PP^n$. If $2x\ge dn+2$, then $S\in \TT(n,d;x)$.
\end{lemma}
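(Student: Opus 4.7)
My plan is to verify the three defining conditions of $\TT(n,d;x)$: the span condition $\langle S\rangle=\PP^n$ is given, so I focus on proving $h^0(\Ii_{2S}(d))>0$ and $h^1(\Ii_{2S}(d))>0$.

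For the nonvanishing of $h^0$, I will use that $T$ is scheme-theoretically cut out by quadrics (Remark \ref{de001}), so there is a quadric $Q$ containing $T$. The degree-$d$ form $F:=Q^2 G$, where $G$ is any form of degree $d-4\ge 0$ (taking $G=1$ when $d=4$), has partial derivatives $\partial_i F = 2QG\,\partial_i Q + Q^2\partial_i G$, all of which vanish on $V(Q)\supset T\supset S$. Hence $F$ is singular at every point of $S$, giving $F\in H^0(\Ii_{2S}(d))$.

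For the nonvanishing of $h^1$, I will follow the template of the smooth rational normal curve case (cf.\ the proof of Theorem \ref{a9.0}) applied now to the arithmetically Cohen--Macaulay genus-$0$ curve $T$. Set $Y:=2S\cap T$, which has degree exactly $2x$ since $S\subset T_{\reg}$ implies that each double point $2p$ contributes a length-$2$ subscheme of the smooth curve $T$. From the sequence $0\to\Ii_{Y,T}(d)\to\Oo_T(d)\to\Oo_Y(d)\to 0$ on $T$, combined with $h^0(\Oo_T(d))=nd+1$ and $h^1(\Oo_T(d))=0$ (from the discussion preceding Remark \ref{de001}), an Euler characteristic count yields $h^1(T,\Ii_{Y,T}(d))\ge 2x-(nd+1)\ge 1$ under the hypothesis $2x\ge nd+2$. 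Since $T$ is aCM, $h^1(\Ii_T(d))=0$; the sequence $0\to\Ii_T\to\Oo_{\PP^n}\to\Oo_T\to 0$ together with $h^1(\Oo_T(d))=0$ also gives $h^2(\Ii_T(d))=0$. The exact sequence $0\to\Ii_T(d)\to\Ii_Y(d)\to\Ii_{Y,T}(d)\to 0$ then forces $h^1(\PP^n,\Ii_Y(d))=h^1(T,\Ii_{Y,T}(d))>0$. Since $Y\subset 2S$, Remark \ref{a9=} gives $h^1(\Ii_{2S}(d))\ge h^1(\Ii_Y(d))>0$.

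The only point of genuine care is confirming that the cohomological properties of $T$ used in the smooth case (aCM, $h^0(\Oo_T(d))=nd+1$, $h^1(\Oo_T(d))=0$, scheme-theoretic cut out by quadrics) survive unchanged in the reducible setting, and that $\deg(Y)=2x$ really does follow from $S\subset T_{\reg}$. The first is already established in the Mayer--Vietoris discussion preceding Remark \ref{de001}, and the second reduces to the standard fact that $\mathfrak{m}_p^2\Oo_{T,p}=\mathfrak{m}_{T,p}^2$ at a smooth point of the curve $T$; so I do not anticipate any real obstacle, and the proof is essentially the bookkeeping sketched above.
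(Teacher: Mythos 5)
Your proof is correct and follows essentially the same route as the paper: a quadric through $T$ (squared and padded to degree $d$, which is why $d\ge 4$ is needed) gives $h^0(\Ii_{2S}(d))>0$, and the degree count $\deg(2S\cap T)=2x\ge nd+2>h^0(\Oo_T(d))$ combined with the arithmetic Cohen--Macaulayness of $T$ gives $h^1(\Ii_{2S}(d))>0$. The paper states these two steps more tersely, but the underlying argument is the same.
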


\begin{proof}
Since $h^0(\Ii_T(2))=\binom{n}2$, we have that $h^0(\Ii_{2S}(d))>0$ if $d\ge4$. 
%

Set $Z:= 2S\cap T$. Since $S\cap \mathrm{Sing}(T)=\emptyset$, $\deg (Z) =2x$ and $Z$ is a Cartier divisor of $T$. 
 Since $h^0(\Oo_T(d)) =nd+1$, then $h^1(\Ii_{Z,T}(d)) \ge1$. Hence $h^1(\Ii_{Z}(d)) \ge1$, since $T$ is arithmetically Cohen-Macaulay, and
 $S\in \TT_1(n,d;x)$. {Finally, since by assumption $\langle S\rangle=\PP^n$, we conclude that $S\in \TT(n,d;x)$.}
\end{proof}

\begin{proposition}\label{de2}
Assume $n\ge 2$ and $d\ge 5$ and set $$x = 1+ \left\lceil \frac{nd}{2}\right\rceil.$$ Fix a reducible rational normal curve $T=T_1\cup \cdots \cup T_s\subset \PP^n$, $s\ge 2$.  Assume the existence of $S\in \TT(n,d;x)'$ such that $S\subset T$.  Set $n_i:= \deg (T_i)$ and $x_i:= \#(S\cap T_i)$. Then:
\begin{enumerate}
\item[(i)] $S\subset T_{\reg}$;
\item[(ii)] $n$ is even and $d$ is odd;
\item[(iii)] every final component $T_i$ of $T$ has $n_i$ odd and $2x_i = n_id+1$.
\end{enumerate}
\end{proposition}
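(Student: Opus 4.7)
The key object is $W:=2S\cap T$ on the arithmetically Cohen-Macaulay curve $T$ (Remark \ref{de001}). Write $x_i:=\#(S\cap T_i)$, $a_i:=n_id-2x_i$, and (once Part (1) is established) $L:=\Oo_T(d)(-W)$, a line bundle on $T$ of multi-degree $(a_1,\ldots,a_s)$. I begin with the unconditional bound $a_i\ge-1$: since $s\ge 2$ forces $\langle T_i\rangle\subsetneq\PP^n$, we have $S\cap T_i\subsetneq S$, and minimality gives $h^1(\Ii_{2(S\cap T_i)}(d))=0$; combining Proposition \ref{prepa1} (concision) with the ACM property of $T_i$ in $\langle T_i\rangle$ and cohomology on $\PP^1$ translates this into $n_id-2x_i\ge-1$. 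Summing, $\sum a_i=nd-2x\in\{-2,-3\}$, depending on the parity of $nd$.

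For Part (1), suppose for contradiction $p\in S\cap\mathrm{Sing}(T)$ lies on $k\ge 2$ branches. A direct local computation at the seminormal $k$-fold singularity gives $\deg(2p\cap T)=k+1$, and the counting $\sum_i x_i=x+\sum_{q\in S\cap\mathrm{Sing}(T)}(k_q-1)$ combined with $\sum 2x_i\le nd+s$ and $2x\ge nd+2$ forces $s\ge 2k$ and rigidly constrains the multi-degree $(a_i)$. My plan is to consider $S':=S\setminus\{p\}\subsetneq S$ and prove $h^1(\Ii_{2S'}(d))>0$, contradicting minimality: the scheme $W':=2S'\cap T$ is now Cartier with multi-degree $a_{i_j}+2$ on branches through $p$ and $a_i$ elsewhere, and by an explicit Mayer-Vietoris computation on $T$ the forced $(a_i)$-pattern yields $h^1(\Oo_T(d)(-W'))>0$. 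The main obstacle here is handling the various possible tree configurations of $T$ near $p$ (e.g., linear chain vs.\ Y-shape), which may require additional input beyond simply removing $p$, such as analyzing the critical scheme of $2S$ directly via Lemma \ref{chan1} together with the local tangent cone data at $p$.

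For Part (2), with (1) in hand, the bounds $a_i\ge-1$ and $\sum a_i\in\{-2,-3\}$ immediately rule out $d$ even: then every $a_i$ would be even and hence $\ge 0$, contradicting $\sum a_i\le-2$. So $d$ is odd, and $a_i\equiv n_i\pmod 2$. If $n$ were odd, then $nd$ is odd, $\sum a_i=-3$, and since $\sum n_i=n$ is odd an odd number of $a_i$ are odd; a case analysis of the resulting multi-degrees produces a final component $T_i$ with $a_i=-1$ whose removal of one point of $S\cap T_i$ yields $a'_i=1$ and, by Mayer-Vietoris on the tree of $T$, $h^1(\Oo_T(d)(-W'))>0$, contradicting minimality. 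Hence $n$ is even.

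For Part (3), assume (1)--(2) and fix a final component $T_i$; suppose $a_i\ge 0$. Writing $T=T_i\cup T''$ with $T_i\cap T''=\{q\}$, the Mayer-Vietoris sequence
\[
0\to L\to L|_{T_i}\oplus L|_{T''}\to L|_q\to 0
\]
has middle evaluation map surjective onto the $1$-dimensional fiber $L|_q$ (since $H^0(\Oo_{\PP^1}(a_i))$ with $a_i\ge 0$ surjects onto the fiber at $q$), so $h^1(L)=h^1(L|_{T''})$. Since $S\cap T''\subsetneq S$ (as $x_i\ge 1$, else $\langle S\rangle\subseteq\langle T''\rangle\ne\PP^n$), Proposition \ref{prepa1} and ACM of $T''$ give $h^1(L|_{T''})=0$, hence $h^1(\Ii_W(d))=0$. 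On the other hand, $h^1(\Ii_{2S}(d))>0$ by the Terracini property, so any critical scheme $Z$ for $S$ satisfies $Z\not\subset T$; using that $T$ is scheme-theoretically cut out by quadrics (Remark \ref{de001}) and iterating Lemma \ref{a43} reduces $Z$ to a subscheme of $T$ at a lower twist, and the non-negativity of the multi-degree of $\Oo_T(d-2)(-S)$ (which gives $h^1(\Ii_S(d-2))=0$) produces the contradiction. Hence $a_i=-1$, and since $a_i\equiv n_i\pmod 2$ with $-1$ odd, $n_i$ must be odd and $2x_i=n_id+1$.
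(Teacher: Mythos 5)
Your framework (the bounds $a_i\ge -1$ from minimality, concision and the ACM property of each $T_i$, plus Mayer--Vietoris on the tree $T$) is the right one, and your Part (3) and the exclusion of $d$ even in Part (2) are essentially the paper's arguments and are correct. But Part (1) is not a proof: you state a plan (remove the singular point $p$ and hope the multi-degrees of $W':=2(S\setminus\{p\})\cap T$ force $h^1(\Oo_T(d)(-W'))>0$) and you yourself flag the tree configurations near $p$ as an unresolved obstacle. The numbers do not close as stated: since the component of $W:=2S\cap T$ at a $k$-fold seminormal point has degree $k+1$ and $\deg W=2x+\sum_{q\in S\cap\mathrm{Sing}(T)}(k_q-1)$, removing $p$ leaves $\deg W'\ge nd+\sum_{q\ne p}(k_q-1)$, which is just $nd$ when $p$ is the only singular point of $T$ lying in $S$; then the total degree of $\Oo_T(d)(-W')$ is only $\le 0$, and finding a \emph{connected} subcurve of degree $\le -2$ genuinely depends on the shape of the tree. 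The paper sidesteps this: it removes a \emph{smooth} point $u\in S\setminus\mathrm{Sing}(T)$ (which exists since $T$ has at most $s-1$ singular points). If $\deg W\ge nd+4$ this already kills minimality, because $\deg\bigl(2(S\setminus\{u\})\cap T\bigr)\ge nd+2>h^0(\Oo_T(d))$. In the remaining case $\deg W\le nd+3$ the count forces $nd$ even and $S\cap\mathrm{Sing}(T)$ to be a single node $u$; splitting $T$ at $u$ into $T'\cup T''$ with $n=\dim\langle T'\rangle+\dim\langle T''\rangle$ and $\deg W=\deg(W\cap T')+\deg(W\cap T'')\ge nd+4$ shows one half satisfies $\deg(W\cap T')\ge d\dim\langle T'\rangle+2$, so the proper subset $S\cap T'$ violates minimality. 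You need an argument of this concreteness to establish (1).

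There is also a flaw in your $n$-odd case of Part (2). Your numerics do give at least three indices with $a_i=-1$ (from $-3=\sum a_i\ge-\#\{i:a_i=-1\}$), but nothing in the multi-degree data forces one of them to sit on a \emph{final} component: a chain with multi-degree pattern $(0,-1,-1,-1,0)$ is consistent with everything you have derived, and then the complement of a final component has $L$-degree $-3-0=-3$ but your "remove a point from a final component with $a_i=-1$" step has no starting point. The fix (which is what the paper does) is to forget about final components here and instead take a \emph{minimal} connected subcurve $T'$ with $\deg(W\cap T')\ge d\dim\langle T'\rangle+2$; such $T'$ exists because $T$ itself qualifies, minimality of $T'$ forces equality, hence $T'\ne T$ (as $\deg W=nd+3$), and the proper subset $S\cap T'$ contradicts minimality of $S$.
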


\begin{proof}
Set $W:= 2S\cap T$.
Note that $x_1+\cdots +x_s\ge x$ and that $x_1+\cdots +x_s= x$ if and only if $S\subset T_{\reg}$.
We have $n =n_1+\cdots +n_s$, $2x =nd+2$ if $nd$ is even and $2x =nd+3$ if $n$ and $d$ are odd. {Obviously $s\le d$ and hence $s-1<x$.}

\quad {\bf Step 1.}
We prove first of all that, for any $i$:
\begin{equation}\label{blabla}
2x_i\le n_id+1 
\end{equation}
Assume, by contradiction, that there exists $i$ such that $2x_i> n_id+1$ and set $S'=S\cap T_i$. 
Note that $h^1(\Ii_{2S'}(d)) =h^1(\Ii_{2S',{T_i}}(d)) $  
since $T_i$ is arithmetically Cohen-Macaulay.
Then since $h^0(\Oo_{T_i}(d)) =n_id+1$ and $\deg(2S')\ge n_id+2$, we have $h^1(\Ii_{2S'}(d))>0$ and hence $S\notin \TT(n,d;x)'$, a contradiction.

\smallskip

\quad {\bf Step 2.}
We prove now (i)  by contradiction.
 Set $S_1:= S\cap \mathrm{Sing}(T)$ and $S_2:= S\setminus S_1$. Since $T$ has at most $s-1$ singular points, $S_2\ne \emptyset$. 
We assume by contradiction that $S_1\neq\emptyset$.

For each $o\in \mathrm{Sing}(T)$ let $m(o)$ denote the number of irreducible components of $T$ passing through $o$. 
We saw that $T$ has Zariski tangent of dimension $m(o)$ and hence the connected component $W(o)$ of $W$ {supported at the point $o$} has degree $m(o)+1$. 
Thus, denoting $w= \deg (W)$, we have
\begin{equation}\label{bla}
w = 2\#(S_2) +\sum _{o\in S_1} (m(o)+1)\ge 2x+\#(S_1).
\end{equation}

If $\deg (W) \ge nd+4$, then fix $u\in S_2$ and set $S':= S\setminus \{u\}$. 
Note that $h^1(\Ii_{2S'}(d)) =h^1(\Ii_{2S',T}(d)) $  
since $T$ is arithmetically Cohen-Macaulay.
Then, {since $h^0(\Oo_T(d)) =nd+1$ and $\deg(2S')\ge w-2\ge nd+2$,} we have $h^1(\Ii_{2S'}(d))>0$ and hence $S\notin \TT(n,d;x)'$, a contradiction. 

Then we can assume \begin{equation}\label{assurdo}\deg (W)\le nd+3.\end{equation} 

\quad (a) Assume first $nd$ even.
Hence we have $2x=nd+2$. Then it follows that
$\#S_1 =1$, say $S_1=\{u\}$, and $T$ is nodal at $u$. Since $p_a(T)=0$, $T$ is connected, the irreducible components of $T$ are smooth
and $T$ is nodal at $u$, $T\setminus \{u\}$ has $2$ connected components. Call $T'$ and $T''$ the closures in $\PP^n$ of the two connected components of $T\setminus \{u\}$. Note that $\deg (W) =\deg (W\cap T') + \deg(W\cap T'')$ and $n =\dim \langle T'\rangle +\dim \langle T''\rangle$, either $\deg (W\cap T') \ge \dim \langle T'\rangle +2$ or  $\deg (W\cap T'') \ge \dim \langle T''\rangle +2$. Thus $S\notin \TT(n,d;x)'$ {and we have a contradiction.} We have proved (i) in this case.

\quad (b) Now assume $d$ odd and $n$ odd.
Then $2x =nd+3$, and by using \eqref{bla} and \eqref{assurdo} we get $S_1=\emptyset$. We have proved (i) in this case.

\smallskip

\quad {\bf Step 3.}
Since $d\ge 5$, a good ordering of the irreducible components of $T$
 and $s-1$ Mayer-Vietoris exact sequences give $h^1(\Ii_S(d-2)) =0$. Let $Z$ be a critical scheme for $S$, that is $h^1(\Ii_Z(d))>0$. Since $h^1(\Ii_T(1))=0$ and $h^2(\Ii_T)=h^2(\Oo_T(1)) =0$, the Castelnuovo-Mumford's lemma gives that $\Ii_T(2)$ is globally generated.
 Since $\Ii_T(2)$ is globally generated and every connected component of $Z$  has degree $\le 2$, $Q\cap Z =T\cap Z$ for a general $Q\in |\Ii_T(2)|$.
Since  $\Res_Q(Z)\subseteq S$ and $h^1(\Ii_S(d-2)) =0$ and $Q$ is arithmetically Cohen-Macaulay, the residual exact sequence with respect to $Q$ gives $h^1(\Ii_{Z\cap Q}(d)) =0$ and hence $Z\subset T$. Thus $Z\subseteq W$.  Since $T$ is arithmetically Cohen-Macaulay, we get $h^1(\Ii_{Z,T}(d)) >0$ and hence \begin{equation}\label{acca1}
h^1(\Ii_{W,T}(d)) >0.\end{equation}

\smallskip

\quad {\bf Step 4.}
We prove now (ii). Recall that, since $S\subset T_{\reg}$,
we have $x_1+\cdots +x_s= x$ and $n_1+\cdots +n_s=n$.

Assume by contradiction that $d$ is even. Recall the inequality \eqref{blabla} from Step 1.
If $d$ is even $2x_i\le n_id+1$, is equivalent to $2x_i\le n_id$, and this implies $2x\le nd$ which contradicts the assumption $2x =nd+2$. We have proved that $d$ is odd.

\smallskip

From now on, we assume $d$ odd. Recall \eqref{blabla}, and in particular:
$2x_i\le n_id+1$ for all odd $n_i$ and $2x_i\le n_id$ for all even $n_i$.

{Now assume $n$ odd by contradiction.}
 In particular, {since $2x =nd+3$, by \eqref{blabla}} we have $s\ge 3$ and there are at least three odd $n_i$ with $2x_i=n_id+1$. Let $T'$ be a minimal connected subcurve of $T$ such that $\deg (T'\cap W)\ge 2 + d\dim (\langle T'\rangle)$. Since $2x_i\le n_id+1$ for all $i$, {by \eqref{blabla},} and each subcurve $T''$ of $T$ has at least one final component (a final component of $T''$, not necessarily of $T$)
 the minimality of $T'$ gives $\deg (T'\cap W) = 2 + d\dim (\langle T'\rangle)$. 
 {It follows that $S\cap T'\in\TT(n,d;x)$ and, since $S\cap T'\subsetneq S$, we conclude that } $S\notin \TT(n,d;x)'$, a contradiction.
 
 Then we have proved (ii).
 
 \smallskip
 
 \quad {\bf Step 5.}
{We finally prove (iii). We know that $d$ is odd and $n$ is even by (ii).}

 Let $T_i$ any final component of $T$. Let $Y$ be the union of all other components of $T$. Since $T_i$ is a final component, $Y$ is connected . Then $\deg (Y) =\dim \langle Y\rangle$ and hence $Y$ is a, possibly reducible, rational normal curve in $\langle Y\rangle$, $\langle Y\rangle \cap \langle T_i\rangle$ is a point, $p$, and $\{p\}$ is the scheme-theoretic intersection of $T_i$ and $Y$. We proved {in Step 2} that $p\notin S$.
 Since $\langle S\rangle =\PP^n$ and $p\notin S$, {then} $\langle S\cap T_i\rangle = \langle T_i\rangle$ and $\langle S\cap Y\rangle =\langle Y\rangle$ and in particular $S\cap T_i\ne \emptyset$ and $S\cap Y\ne \emptyset$. Since $S$ is minimal and
 $T$ is arithmetically Cohen-Macaulay, $h^1(\Ii_{Z\cap T_i}(d)) = h^1(\Ii_{Z\cap Y,T}(d)) =0$. The  following Mayer-Vietoris type sequence on $T$
 \begin{equation}\label{eqmv1}
 0 \to \Ii_{W,T}(d)\to \Ii_{W\cap T_i,T_i}(d)\oplus \Ii_{W\cap Y,Y}(d)\to \Oo_p(d)\to 0
 \end{equation}
 is exact, because $p\notin S$. We proved that $h^1(\Ii_{Z\cap T_i,T_i}(d))=h^1(\Ii_{Z\cap Y,Y}(d))=0$.

{Assume by contradiction that $n_i$ is even.} Then we have $2x_i\le n_id$. The restriction map $H^0(\Ii_{W\cap T_i,T_i}(d))\to H^0(\Oo_p(d))$ is surjective, because $T_i\cong \PP^1$
and $\deg (W\cap T_i) \le \deg(\Oo_{T_i}(d))$. Thus \eqref{eqmv1} gives $h^1(\Ii_{W,T}(d)) =0$, a contradiction with \eqref{acca1}. Then we have proved that $n_i$ is even for
every final component $T_i$ of $T$. Hence we also have $2x_i = n_id+1$ and this conclude the proof.
\end{proof}

\section{Minimally Terracini finite sets in the plane} \label{sec-plane}

In this section we focus on the case of the plane.
We deduce from \cite{ep} the following result, which we will need in the sequel.

%
\begin{remark}\label{n2e1remark} 
Fix positive integers $d, z$ such that $z\leq 3d$.
Let $Z\subset \PP^2$ be a zero-dimensional {scheme}, $Z\ne \emptyset$. If $\deg(Z)=z$ and 
$d$ is the maximal integer
$t$ such that $h^1(I_{Z}(t)) >0$,
 then either there is line $L$ such that $\deg (L\cap Z)\ge d+2$
or there is a conic such that $\deg (Z\cap D)\ge 2d+2$ or $z=3d$ and $Z$ is the complete intersection of a plane cubic and a degree $d$ plane curve (see \cite[Remarque (i) p. 116]{ep}).
\end{remark}

\begin{proposition}\label{n2a1} 
Fix integers $x>0$ and $d\ge 4$.

\quad (a) If $x\le d$, then $\TT(2,d;x)'=\emptyset$.

\quad (b) Let $S\in S(\PP^2,d+1)$. Then $S\in \TT(2,d,d+1)'$ if and only if $S$ is contained in a reduced conic $D$.  Moreover, if $D=R\cup L$ is reducible (with $L$ and $R$ lines), then $d$ is odd, $\#(S\cap R)=\#(S\cap L) = (d+1)/2$ and 
$S\cap R\cap L=\emptyset$.

\quad (c) Assume $d\ge 5$. Then $\TT(2,d;x)'=\emptyset$ for all $x$ such that $d+2 \le x < 3d/2$.
\end{proposition}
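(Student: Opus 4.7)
The unifying strategy is to take a critical scheme $Z$ for a putative minimally Terracini set $S$ and apply the trichotomy of Remark~\ref{n2e1remark} to $Z$, ruling out the complete-intersection case~(iii) by degree and the line-concentration case~(i) by minimality of $S$, so that $Z$ (and hence $S=Z_{\red}$) must lie on a reduced conic.

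For part~(a), I would suppose $S\in\TT(2,d;x)'$ with $x\le d$ and let $Z$ be critical. Then $\deg(Z)\le 2d\le 2d+1$, so Remark~\ref{obs1} yields a line $L$ with $\deg(Z\cap L)\ge d+2$. Since $\langle S\rangle=\PP^2$ forces $Z\not\subset L$, Lemma~\ref{a43} with $t=1$ gives $h^1(\Ii_{\Res_L(Z)}(d-1))>0$. But $\deg(\Res_L(Z))\le 2d-(d+2)=d-2<d+1$, contradicting Remark~\ref{obs1} applied at twist $d-1$.

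For part~(b), the ``only if'' direction starts with a critical $Z$ for $S$; since $Z\subset 2S$, Theorem~\ref{rob1}(i) together with Remark~\ref{a9=} give $h^1(\Ii_Z(d+1))=0$, so $d$ is the maximal $t$ with $h^1(\Ii_Z(t))>0$ and Remark~\ref{n2e1remark} applies. Case~(iii) requires $\deg(Z)=3d$, excluded for $d\ge 4$ since $\deg(Z)\le 2d+2<3d$. Case~(i) yields a line $L$ with $\#(S\cap L)\ge\lceil(d+2)/2\rceil$; Remark~\ref{a8} then forces $h^1(\Ii_{2(S\cap L)}(d))>0$, and $S\cap L\subsetneq S$ (since $\langle S\rangle=\PP^2$) contradicts minimality. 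So case~(ii) must hold: a conic $D$ with $\deg(Z\cap D)\ge 2d+2\ge\deg(Z)$, giving $Z\subset D$ and hence $S\subset D$; a non-reduced $D$ would force $S$ onto a line and is excluded. The converse direction checks Terracini directly (using $\Ii_{2D}\subset\Ii_{2S}$ for $h^0>0$, and $\deg(2S\cap D)=2d+2$ exceeding $h^0(\Oo_D(d))=2d+1$ for $h^1>0$), and minimality via the residual sequence $0\to\Ii_{S'}(d-2)\to\Ii_{2S'}(d)\to\Ii_{2S'\cap D,D}(d)\to 0$ for $S'\subsetneq S$, $\#S'=d$, both of whose outer terms have $h^1=0$; in the reducible case $D=L_1\cup L_2$ this analysis is valid precisely under the even-split condition $\#(S\cap L_i)=(d+1)/2$ with $S\cap L_1\cap L_2=\emptyset$ (otherwise Remark~\ref{a8} produces $h^1>0$ on the proper subset $S\cap L_i$), which yields the ``moreover'' statement.

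For part~(c), take $S\in\TT(2,d;x)'$ with $d+2\le x<3d/2$ and a critical $Z$, so $\deg(Z)\le 2x\le 3d-1$, ruling out case~(iii) of Remark~\ref{n2e1remark}. Case~(i) fails by the same minimality argument as in~(b). Case~(ii) gives a conic $D$ with $\deg(Z\cap D)\ge 2d+2$, so $\deg(\Res_D(Z))\le(3d-1)-(2d+2)=d-3$; combining Lemma~\ref{a43} at $t=2$ with Remark~\ref{obs1} at twist $d-2$ (which would require degree $\ge d$) forces $Z\subset D$, hence $S\subset D$ reduced. To contradict minimality I would pick $S''\subsetneq S$ with $\#S''=x-1\ge d+1$: in the smooth case $\deg(2S''\cap D)\ge 2d+2>2d+1=h^0(\Oo_D(d))$ forces $h^1(\Ii_{2S''}(d))>0$; in the reducible case the bounds $\#(S\cap L_i)\le(d+1)/2$ (again forced by minimality) give $x\le d+1$, contradicting $x\ge d+2$. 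The main obstacle throughout is the careful cross-referencing between $Z$ and its reduction $S$ when invoking Remark~\ref{n2e1remark} and transferring the conic-containment conclusion back to $S$, together with handling the fiber-product structure of $\Oo_D$ in the reducible conic case.
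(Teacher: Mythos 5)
Your proposal is correct and follows essentially the same strategy as the paper: pass to a critical scheme $Z$ with $Z_{\red}=S$, apply the trichotomy of Remark \ref{n2e1remark} (justifying its hypothesis via Theorem \ref{rob1}), exclude case (iii) by degree and case (i) by minimality, and force $S$ onto a reduced conic. The only real divergence is that you verify the sufficiency and the ``moreover'' clause of part (b) directly with residual and Mayer--Vietoris sequences on the conic, where the paper instead cites Theorem \ref{a9.0}(iii) and Proposition \ref{de2}, and you occasionally detour through Lemma \ref{a43} plus Remark \ref{obs1} where the paper notes directly that a proper subscheme with $h^1>0$ already contradicts criticality; both routes work.
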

\begin{proof}
We prove (a) by contradiction. Assume $x\le d$ and consider $S\in \TT(2,d;x)'$. Let $Z$ be a critical scheme for $S$. We have $\deg (Z)\le 2x$ and $d$ is the maximal integer such that $h^1(\Ii_Z(d)) >0$ by Theorem \ref{rob1}.  
Then $\deg(Z)\le 2d$ and,
by Lemma \ref{obs1}, there is a line $L$ such that $\deg (Z\cap L)\ge d+2$. Thus  $h^1(\Ii_{Z\cap L}(d)) >0$. Since
$\langle S\rangle =\PP^2$, $S$ is not minimal, {a contradiction}.

The {\it if} implication of part (b)  follows from
Theorem \ref{a9.0} (iii). 

We prove now the other implication of (b).
Take $S\in \TT(2,d;d+1)'$ and let $Z$ be a critical scheme for $S$. By Lemma \ref{las1}, $Z_{\red}=S$. Assume that $S$ is not contained in a reduced conic. Since $\langle S\rangle =\PP^2$, $S$ is not contained in a double line, {therefore $S$ is not contained in a conic}. Hence Remark \ref{n2e1remark} implies that there is a line $L\subset \PP^2$ such that $\deg (L\cap Z)\ge d+2$ and hence $h^1(\Ii_{Z\cap L}(d)) >0$. Hence $S$ is not minimal. 
Finally  Proposition \ref{de2} gives the last part of (b).

We prove finally (c) {by contradiction}. Assume $d+2\le x<3d/2$ and let $S\in \TT(2,d;x)$ with $Z$ critical for $S$. Since $S$ is minimal $\#(S\cap L)\le (d+1)/2$ for all lines $L$ and $\#(S\cap D)\le 2d+1$ for each conic. Since $Z$ is critical, $\deg (Z\cap L)\le d+1$ for each line $L$ and $\deg (D\cap Z)\le 2d+1$ for any conic $D$. {Thus  since $\deg(Z)\le 3d-1$, by Remark \ref{n2e1remark} we have $h^1(\Ii_Z(d))=0$, a contradiction.}
\end{proof}

Just above the range covered by Proposition \ref{n2a1} we have the following examples.

\begin{example}\label{n2e1-exx} 
Assume $d=2k$, {for $k\in\NN$}, $d\ge 6$ and take $x:= 3 k$. Let $C\subset \PP^2$ be a smooth plane cubic and $T$ a smooth
plane curve of degree $k$. Take as $S$ the complete intersection $C\cap T$.
Set $Z:= C\cap 2T= 2S\cap C$. Since $\deg (Z) =3d$ and $h^0(\Oo_C(d)) =3d$,  
then  $h^1(\Ii_{Z,C}(d)) =h^0(\Ii_{Z,C}(d))=1$. 
Since  $h^0(\Oo_C(d-3))) =3d-9\ge 3k=\#S$,
we get $h^1(\Ii_{S,C}(d-3)) =0$. Since $C$ is arithmetically normal, $h^1(\Ii_S(d-3)) =0$. Thus the residual exact sequence with respect to $C$ gives $h^1(\Ii_{2S}(d)) =h^1(\Ii_{{Z},C}(d)){=1}$. 
We also get $h^1(\Ii_{2S'\cap C, C}(d)) =0$ for all $S'\subsetneq S$, since $\deg(2S'\cap C)\leq 3d-2$.
Thus $S\in \TT(2,d;3d/2)'$. 
\end{example}

\begin{example}\label{n2e2} 
Take $d$ odd, $d\ge 7$, and set $x:= (3d+1)/2$. Let $C\subset \PP^2$ a smooth plane cubic. Take $S\subset C$ such that $\#S=(3d+1)/2$. By assumption $S$ is a Cartier divisor of $C$. Since $p_a(C)=1$ and $\deg (\Oo_C(d-3))) =3d-9>\#S$, then
$h^1(C,\Ii_{S,C}(d-3)) =0$. Since $C$ is arithmetically-normal, $h^1(\Ii_S(d-3)) =0$. Thus the residual exact sequence with respect to $C$ gives $h^1(\Ii_{2S}(d)) =h^1(\Ii_{2S\cap C,C}(d))$. Since $p_a(C)=1$, we get
$h^1(\Ii_{2S\cap C,C}(d)) =1$. We also have $h^1(\Ii_{2S'\cap C, C}(d)) =0$ for all $S'\subsetneq S$, since $\deg(2S'\cap C)\le 3d-1$, hence $S\in \TT(2,d;(3d+1)/2)'$.
\end{example}

\section{Minimally Terracini finite sets in $\PP^3$}\label{ultimasez}
Now we consider the case of finite sets of points in $\PP^3$.
The following proposition extends Remark \ref{n2e1remark} to the case of schemes of $\PP^3$.

\begin{proposition}\label{sob1} 
Fix a positive integer $d$.  Let $Z\subset \PP^3$ be a zero-dimensional scheme such that $\langle Z\rangle =\PP^3$, its connected components have degree $\le 2$ and $z:= \deg(Z) \le 3d+1$. We have  $$h^1(\Ii_Z(d)) >0$$ if and only if one of the following cases occur:

\quad (i) there is a line $L\subset \PP^3$ such that $\deg (L\cap Z)\ge d+2$;

\quad (ii) there is a conic $D$ such that $\deg (D\cap Z)\ge 2d+2$;

\quad (iii) there is a plane cubic $T$ such that $\deg (T\cap Z)=3d$ and $T\cap Z$ is the complete intersection of $T$ and a degree $d$ plane curve.
\end{proposition}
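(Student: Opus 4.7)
My plan is to prove the two implications separately, with the ``only if'' direction treated by residuation with respect to a maximally intersecting plane.

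For the ``if'' direction, in each of (i)--(iii) I let $C$ denote the corresponding curve (a line, a conic, or a plane cubic) and consider the ideal sequence $0\to \Ii_C(d)\to \Ii_{Z\cap C}(d)\to \Ii_{Z\cap C,C}(d)\to 0$. Since $p_a(C)\le 1$ and $\deg\Oo_C(d)>0$, one has $h^1(\Oo_C(d))=0$ and hence $h^2(\Ii_C(d))=0$, so it is enough to check $h^1(C,\Ii_{Z\cap C,C}(d))>0$ and then apply Remark \ref{a9=} to pass back to $\Ii_Z(d)$. Case (i) is the elementary fact $h^1(\PP^1,\Oo(d-\deg(Z\cap L)))>0$ when $\deg(Z\cap L)\ge d+2$; case (ii) follows analogously on the (possibly reducible, arithmetically Cohen--Macaulay) conic $D$, using $h^0(\Oo_D(d))=2d+1<2d+2$; and case (iii) is the observation that the complete-intersection hypothesis identifies $\Ii_{Z\cap T,T}(d)$ with $\Oo_T$, so $h^1=p_a(T)=1$.

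For the ``only if'' direction, I first replace $Z$ by a minimal subscheme realizing $h^1>0$, so by Lemma \ref{las0} I may assume $h^1(\Ii_Z(d))=1$. I then pick a plane $H\subset\PP^3$ maximizing $w:=\deg(Z\cap H)$; since $\langle Z\rangle=\PP^3$ one has $w\ge 3$. The residual exact sequence $0\to \Ii_{\Res_H(Z)}(d-1)\to \Ii_Z(d)\to \Ii_{Z\cap H,H}(d)\to 0$ forces at least one of Case A, $h^1(H,\Ii_{Z\cap H,H}(d))>0$, or Case B, $h^1(\Ii_{Z_1}(d-1))>0$ with $Z_1:=\Res_H(Z)$ of degree $\le 3d+1-w$. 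In Case A I apply Remark \ref{n2e1remark} to $Z\cap H\subset H\cong\PP^2$: its three alternatives (a line $L\subset H$ with $\ge d+2$ points; a conic $D\subset H$ with $\ge 2d+2$ points; or $\deg(Z\cap H)=3d$ with $Z\cap H$ a complete intersection of a plane cubic $T\subset H$ and a degree-$d$ plane curve) translate into the three alternatives of the proposition. In the cubic sub-alternative, minimality forces $Z\subset H$, since otherwise the proper subscheme $Z\cap H$ would already realize $h^1>0$.

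Case B is the main obstacle. When $w\ge d+2$ one has $\deg(Z_1)\le 2d-1$, so Remark \ref{obs1} (with $d$ replaced by $d-1$) produces a line $L_0$ with $\deg(L_0\cap Z_1)\ge d+1$; a short case split---on whether $L_0\subset H$ or $L_0$ meets $H$ in a point supported in $Z\cap H$---then supplies one further point and promotes this to $\deg(L_0\cap Z)\ge d+2$, yielding (i). When $w\le d+1$, maximality of $w$ forces $\deg(L\cap Z)\le d+1$ for every line $L$ (so (i) is ruled out) and heavily restricts the geometry of $Z$; I would then argue by induction on $d$, applying the proposition to $Z_1$ with parameter $d-1$ (where $\deg(Z_1)\le 3d-2$), and transfer the resulting line, conic, or plane cubic structure back to $Z$ by accounting for the contribution of the components of $Z\cap H$ that meet the curve in question. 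The delicate point throughout is that intersection with a line is not additive with respect to residuation along $H$; the hypothesis that every connected component of $Z$ has degree $\le 2$ bounds the resulting discrepancy and keeps the bookkeeping tractable.
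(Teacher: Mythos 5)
Your ``if'' direction and your Case A (a plane section already has positive $h^1$, so Remark \ref{n2e1remark} applies) agree with the paper. The gap is in Case B, which is exactly where the substance of the proposition lies.

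First, the ``promotion'' step when $w\ge d+2$ does not work. Residuation only gives $\deg(L_0\cap Z)\ge \deg(L_0\cap \Res_H(Z))$, and this is an equality in general: for instance $L_0$ may meet $H$ at a point not supported in $Z$ at all (a case your dichotomy does not cover), and every connected component of $Z$ meeting $L_0$ may be disjoint from $H$, so that $\deg(L_0\cap Z)=\deg(L_0\cap\Res_H(Z))=d+1<d+2$; likewise a degree-$2$ component supported at a point of $L_0\cap H$ and transverse to $H$ contributes degree $1$ to both the trace and the residual along $L_0$. There is no extra point to be found, and in this situation the correct conclusion is \emph{not} case (i) via $L_0$: one must either locate a conic (case (ii)) or show the configuration is numerically impossible, and neither is addressed.

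Second, in the sub-case $w\le d+1$ you defer to ``induction plus bookkeeping'' and explicitly flag the non-additivity of $\deg(\,\cdot\,\cap L)$ under residuation as the delicate point without resolving it; that unresolved point is the theorem. The paper's proof runs all of Case B under the stronger standing hypothesis that $h^1(M,\Ii_{Z\cap M,M}(d))=0$ for \emph{every} plane $M$ (not just the maximal one). This at once excludes case (i) for $Z$ and excludes any plane with $\deg(M\cap Z)\ge 2d+2$, and therefore pins down $\deg(R\cap Z)=d+1$ \emph{exactly} for the line $R$ produced by induction (note also that the inductive hypothesis cannot be applied verbatim to $\Res_H(Z)$, which need not span $\PP^3$). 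The paper then residuates a second time along a plane $M\supset R$ to produce a second line $L$ with $\deg(L\cap Z)=d+1$, and closes as follows: if the two lines meet, their span is a plane meeting $Z$ in degree $\ge 2d+2$, contradicting the standing hypothesis; if they are disjoint, a general quadric $Q$ through both satisfies $Z\cap Q=Z\cap(L\cup R)$ (this is where the degree-$\le 2$ hypothesis on components enters), the residual sequence forces $h^1(\Ii_{\Res_Q(Z)}(d-2))>0$, hence $\deg(\Res_Q(Z))\ge d$ by Remark \ref{obs1}, and then $z\ge d+(d+1)+(d+1)=3d+2>3d+1$, a contradiction. This two-line/quadric endgame is absent from your proposal, and without it the ``only if'' direction is not proved.
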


\begin{proof}
{Set $S:= Z_{\red}$.}

Since the {\it if} part is trivial, we only need to prove the {\it only if} part.

We use  induction on $d$. The case $d=1$ is obvious,
since conditions $\deg (Z)\le 4$ and $\langle Z\rangle =\PP^3$ imply that $Z$ is linearly independent  {and hence $h^1(\Ii_Z(1))=0$.}

Assume $d\ge 2$ and that the proposition is true for lower degrees. 
If there is a plane $H$ such that $h^1(\Ii_{Z\cap H}(d)) >0$, then we may use Remark \ref{n2e1remark} {and we conclude}.

\smallskip

Now we 
assume that \begin{equation}\label{quasi-finito}h^1(\Ii_{Z\cap H}(d)) =0\text{ for any plane }H\subset \PP^3.
\end{equation}


Take a plane $H\subset \PP^3$ such that  $w:= \deg (Z\cap H)$ is maximal. 
Since $\langle Z\rangle =\PP^3$
then we have $z\ge4$, and $w\ge 3$,
and hence $\deg (\Res_H(Z)) =z-w\le 3(d-1)+1$. 
Since $h^1(\Ii_{Z\cap H}(d)) =0$ by \eqref{quasi-finito}, then the residual exact sequence with respect to $H$ gives $h^1(\Ii _{\Res_H(Z)}(d-1)) >0$. 
The inductive assumption applied to the scheme $\Res_H(Z)$ implies that we are {in one of the following cases:}

\quad case (i$^\prime$):
either there is a line $R$ such that
$\deg (R\cap \Res_H(Z)) \ge d+1$, 

\quad case (ii$^\prime$):
or there is a conic $E$ such that $\deg (E\cap \Res_H(Z))\ge 2d$, 

\quad case (iii$^\prime$):
or there is a plane cubic $C$ such that $\deg (C\cap \Res_H(Z))=3d-3$ and $\Res_H(Z) \cap C$ is the complete intersection of $C$ and a degree $d-1$ plane curve. 

\smallskip

We analyse separately the three cases in the following three steps (a), (b), (c).

\smallskip

 \quad {\bf Step (a).}
Assume first that we are in case (iii$^\prime$). 
Since
 $\deg (\Res_H(Z) \cap C)=3d-3$, then
 $z-w=\deg (\Res_H(Z))\ge 3d-3$.
On the other hand, since $\Res_H(Z) \cap C$ is contained in a plane, we also have
 $w\ge 3d-3$ and hence $z\ge 6d-6$. 
Now since $z\le 3d+1$, we get $d=2$.

Since $d=2$, we have $z\le 7$. Moreover 
since $h^1(\Ii_{\Res_H(Z)}(1)) >0$, then the scheme $\Res_H(Z)$ is linearly dependent and so we have
$w\ge \deg(\Res_H(Z))$, by the maximality assumption on $w$.
So we have $z-w\le3=2(d-1)+1$, and
by Lemma \ref{obs1},  it follows that there is a line $J$ such that  $\deg(J\cap \Res_H(Z))=(d-1)+2=3$.

{Take now a plane $M\supset J$ such that $w':= \deg(M\cap Z)$ 
is maximal. Since $\dim |\Ii_J(1)| =1$, we have $w'\ge 4$. 
 We get $w=w'=4$ and $z=7$. Taking $M$ instead of $H$ and repeating the argument above, we have 
 $h^1(\Ii _{\Res_M(Z)}(1)) >0$, and again by Lemma \ref{obs1}, it follows that
there exists a line $K$ such that $\deg(K\cap \Res_M(Z))=3$, hence $\Res_M(Z) \subset K$.}

 If $\deg (K\cap Z)=4$ or $\deg (J\cap Z)=4$, then we are in case (i) and the theorem is proved.

Now we exclude the remaining case which is
\begin{equation}\label{ipotesis}\deg(Z\cap K)=\deg(Z\cap J) =3.
\end{equation}
Assume by contradiction \eqref{ipotesis}
 and 
 consider separately the following three possibilities:
either $J\cap K\ne \emptyset$ and $J\ne K$, or $K\cap J=\emptyset$, or $J=K$.

\smallskip

\quad (a1) Assume that $J\cap K\ne \emptyset$ and $J\ne K$. Recall that 
any connected component
 of $Z$ has degree $\le 2$, and clearly we have
$\deg (J\cap K)=1$. Hence the plane spanned by $J\cup K$ gives $w\ge \deg(J\cap Z)+\deg(J\cap K)-1=5$, a contradiction with $w=4$.

\smallskip

\quad (a2) Assume $K\cap J=\emptyset$. Since $\deg (Z)=7$ and $h^1(\Ii_Z(2)) >0$,  by Lemma \ref{a7} we have
 $\dim |\Ii_Z(2)|=h^0(\Ii_Z(2))-1\ge 3$. Take a general $Q\in |\Ii_Z(2)|$. The theorem of B\'ezout and the assumptions \eqref{ipotesis} imply that $J\cup K\subset Q$. Since $J\cap K=\emptyset$, $Q$ is not a an irreducible quadric cone or a double points. Moreover, since $Q$ is general, thent $Q$ is not the union of a plane containing $J$ and a plane containing $K$. Thus $Q$ is a smooth quadric. Since $J\cap K=\emptyset$, then $J$ and $K$ are contained in the same ruling of $Q$, say $J,K\in |\Oo_Q(1,0)|$. We have $h^1(Q,\Ii_{Z,Q}(2,2)) =h^1(\Ii_Z(2)) >0$.

  
Note that, by using \eqref{ipotesis}, we have $h^1(K,\Ii_{Z\cap K,K}(2)) =h^1(J,\Ii_{Z\cap J,J}(2)) =0$. Since $\deg(Z)=7$, then the degree of $\Res_{J\cup K}(Z)$ is $1$, and hence it follows that
 $h^1(Q,\Ii_{\Res_{J\cup K}(Z),Q}(0,2)) =0$.
Now, taking cohomology
 of the residual exact sequence 
$$0\to \Ii_{\Res_{J\cup K}(Z),Q}(0,2)\to \Ii_{Z,Q}(2,2)\to \Ii_{(Z\cap J)\cup(Z\cap K),Q}(2,2)\to 0,$$
we obtain $h^1(Q,\Ii_{Z,Q}(2,2)) =0$, which is a contradiction.

\smallskip

\quad (a3) Assume finally that $J=K$. 
Recall that
 all the connected components of $Z$ have degree $\le 2$ and $S= Z_{\red}$. 
 From \eqref{ipotesis}
we deduce the following facts:
$\#(S\cap J)=3$, each connected component of $Z$ supported at $J$ has degree $2$ and none of them is contained in $J$. 
Moreover, since $\deg(Z)=7$, we have that $ S\setminus (S\cap J)$ is a simple point $p$. Let $H_1$ be a plane containing $J$ and not containing $p$. Set $Q_1:= 2H_1$ and consider
the residual exact sequence
with respect to $Q_1$ 
$$0\to \Ii_{\Res_{Q_1}(Z)}\to \Ii_{Z}(2)\to \Ii_{Z\cap Q_1,Q_1}(2)\to 0.$$
Since $J\subset \mathrm{Sing}(Q_1)$ and each connected component of $Z$ has degree $\le 2$, we have $Z_1:=Z\cap Q_1 =Z\setminus \{p\}$ and $\Res_{Q_1}(Z)=\{p\}$. Hence we have $h^1(\Ii_{\Res_{Q_1}(Z)})=h^1(\Ii_p)=0$.
It follows from the exact sequence that $h^1(\Ii_{Z_1,Q_1}(2))\ge h^1(\Ii_{Z}(2)) >0$ and hence $h^1(\Ii_{Z_1}(2)) >0$.

Fix now $p_1\in S\setminus \{p\}$ and let $A$ be the connected component of $Z_1$ supported at $p_1$. Take a plane $U$ containing $A\cup J$. Since $w=4$, by maximality we have $\deg (U\cap Z_1)\le \deg (U\cap Z) \le 4$, and hence $\deg (U\cap Z_1) = 4$. Since $\deg(\Res_U(Z_1)) =2$, then $h^1(\Ii_{\Res_U(Z_1)}(1)) =0$.
Thus taking the cohomology of the residual exact sequence with respect to $U$
$$0\to \Ii_{\Res_{U}(Z_1)}(1)\to \Ii_{Z_1}(2)\to \Ii_{Z_1\cap U,U}(2)\to 0.$$
we obtain $h^1(\Ii_{Z_1\cap U,U}(2))\ge 
h^1(\Ii_{Z_1}(2))>0$. This implies, by Lemma \ref{a9=}, that there is a plane $U$ such that $h^1(\Ii_{Z\cap U}(2))>0$, and this contradicts our assumption \eqref{quasi-finito}.

\medskip

\quad {\bf Step (b).}
Assume now that we are in case (ii$^\prime$).
Since there is a conic $E$ such that $\deg (E\cap \Res_H(Z))\ge 2d$, we get $w\ge 2d$ and $z-w\ge 2d$. It follows that $z\ge 4d$, which contradicts the assumptions $z\le3d+1$ and $d\ge2$.

\medskip

\quad {\bf Step (c).}
 Assume finally that we are in case (i$^\prime$), i.e.\
assume that there is a line $R$ such that
$\deg (R\cap \Res_H(Z)) \ge d+1$. If $\deg (R\cap Z)\ge d+2$, then we may take $L=R$ and we are in case (i) 
and the theorem is proved.

Now we assume that  $\deg(R\cap Z)=d+1$ and we will prove that either we are again in case (i), or we have a contradiction.

Since $\deg(R\cap Z)=d+1$, then we have $R\cap Z =R\cap \Res_H(Z)$.
 By the maximality assumption on $H$ we also know that $w=\deg(Z\cap H)\ge d+1$.

Take a general plane $M\supset R$ and consider the scheme $X:=Z\cap (H\cup M)$.  
Since $\deg(M\cap \Res_H(Z)) \ge \deg(R\cap \Res_H(Z))=d+1$, we have $\deg(X) \ge w+d+1\ge 2d+2$. Hence,
the hypothesis $\deg(Z)\le 3d+1$ implies that $\deg(\Res_{H\cup M}(Z)) \le d-1$. 
Then, by Lemma \ref{obs1}, we get
$h^1(\Ii_{\Res_{H\cup M}(Z)}(d-2))=0$. 

The residual exact sequence of $Z$ with respect to $H\cup M$: 
$$0\to \Ii_{\Res_{H\cup M}(Z)}(d-2)\to \Ii_{Z}(d)\to \Ii_{X,H\cup M}(d)\to 0$$
gives $h^1(\Ii_X(d)) = h^1(\Ii_{X,H\cup M}(d)) \ge h^1(\Ii_Z(d))
>0$.

Since $h^1(\Ii_{Z\cap M}(d)) =0$ by assumption \eqref{quasi-finito}, then we have also $h^1(\Ii_{X\cap M}(d))=0$, by Lemma \ref{a9=}.
The residual exact sequence of $X$ with respect to  $M$: 
$$0\to \Ii_{\Res_{M}(X)}(d-1)\to \Ii_{X}(d)\to \Ii_{X\cap M, M}(d)\to 0$$
gives $h^1(\Ii_{\Res_M(X)}(d-1)) >0$.  

{We consider now separately the two following cases: either the line $R$ is contained in $H$, or it is not contained.}

\smallskip

\quad (c1)  Assume $H\supset R$. Recall that $S= Z_{\red}$. Since each connected component of $Z$ has degree $\le 2$, we deduce the following facts: $\#(S\cap R)= d+1$, each connected component of $Z$ supported at a point of $S\cap R$ has degree $2$ and  no connected component of $Z$ is contained in $R$. 

Take general planes $H_1,H_2\in |\Ii_R(1)|$. Since $R=\mathrm{Sing}(H_1\cup H_2)$ and $H_1,H_2$ are general,  $Z'=Z\cap (H_1\cup H_2)$ is the union of the connected components of $Z$ which are supported at a point of $S\cap R$. 
Since $\deg(\Res_{H_1\cup H_2}(Z))\le 3d+1-2(d+1) =d-1$, by Lemma \ref{obs1} we have $h^1(\Ii_{\Res_{H_1\cup H_2}(Z)}(d-2)) =0$.
Then the residual exact sequence of $Z$ with respect to  $H_1\cup H_2$: 
$$0\to \Ii_{\Res_{H_1\cup H_2}(Z)}(d-2)\to \Ii_{Z}(d)\to \Ii_{Z', H_1\cup H_2}(d)\to 0.$$
gives $h^1(\Ii_{Z'}(d))=h^1(\Ii_{Z', H_1\cup H_2}(d))\ge h^1(\Ii_{Z}(d)) >0$.

Take a connected component $A$ of $Z'$. Since $\deg(A)=2$ and $\deg(A\cap R)=1$, there is a unique plane $H_3$ containing $A\cup R$.
Since $h^1(\Ii_{Z\cap H_3}(d)) =0$ by assumption \eqref{quasi-finito}, we have $h^1(\Ii_{Z'\cap H_3}(d))=0$ by  Lemma \ref{a9=}. Since $\deg(\Res_{H_3}(Z')) \le d$, we have $h^1(\Ii_{\Res_{H_3}(Z')}(d-1)) =0$ by Lemma \ref{obs1}. Thus the residual exact sequence of $Z'$ with respect to $H_3$:
$$0\to \Ii_{\Res_{H_3}(Z')}(d-1)\to \Ii_{Z'}(d)\to \Ii_{Z'\cap H_3, H_3}(d)\to 0$$
 gives $h^1(\Ii_{Z'}(d))=0$, a contradiction.  

\smallskip

\quad (c2) We assume now $H\nsupseteq R$. Thus $H$ contains at most one point of $S\cap R$.
For any $p\in R\cap S$ let $A_p$ denote the connected component of $Z$ supported at
$p$. 

Since $M$ is general and $S\cap R$ is  finite, $M\nsupseteq A_p$ for any $p\in S\cap R$. 
Recall that $X=Z\cap (H\cup M)$.
Thus if $S\cap H\cap R=\emptyset$, then we have
that $X =(Z\cap H)\cup (R\cap S)$ (as schemes), while if $R\cap H\cap S=\{p\}$, then $X$ is the union of $A_p$, the points $(S\setminus \{p\})\cap R)$ and the scheme $(Z\cap H)\setminus \{p\}$. 

Since $\deg(X) \ge 2d+2>d+1 =\deg (Z\cap R)$ there is a plane $U\supset R$ 
such that $ \deg(X\cap U)\ge d+2$. If $p\in S\cap R$ with $\deg(A_p)=2$ we  take as $U$ the plane spanned by $R\cup A_p$. 

We have $\deg(\Res_U(X))=\deg(X)-\deg(X\cap U) \le 3d+1-(d+2) =2(d-1)+1$. By Lemma \ref{obs1}
there is a line $J$ such that $\deg(\Res_U(X)\cap J)\ge d+1$. 

Since by construction we know that $\Res_U(Z)\cap R=\emptyset$, then $J\ne R$.

 If $\deg(J\cap Z)\ge d+2$, we take $L=J$ and we are in case (i) and the theorem is proved.
 Thus we may assume $\deg (J\cap Z)=d+1$ and we will find a contradiction. 
 
 If $J\cap R\ne \emptyset$, the plane $N$ spanned by $J\cup R$ proves that $w\ge \deg(N\cap X)\ge 2d+2$ and hence
$\deg(\Res_H(Z))=z-w \le d-1<\deg(Z\cap R)-1$, which is impossible since $\deg(Z\cap R\cap H)\le1$.

Now assume $J\cap R=\emptyset$. Fix a general $Q\in |\Ii_{J\cup R}(2)|$. Since any $2$ pairs of $2$ skew lines are projectively equivalent, $Q$ is smooth. Since $\Ii_{J\cup R}(2)$ is globally generated, $Q$ is general , each connected component of $Z$ has degree at most $2$ and $Z$ is finite, $Z\cap Q= Z\cap (J\cup R)$ (as schemes). Since $\deg(\Res_Q(Z)) \le 3d+1-2d-2=d-1$, we have by Lemma \ref{obs1} that $h^1(\Ii_{\Res_Q(Z)}(d-2)) =0$. 

Hence the residual exact sequence  with respect to  $Q$: 
$$0\to \Ii_{\Res_{Q}(Z)}(d-2)\to \Ii_{Z}(d)\to \Ii_{(Z\cap J)\cup (Z\cap R), Q}(d)\to 0$$
gives $h^1(\Ii_{(Z\cap J)\cup (Z\cap R)}(d))=h^1(\Ii_{(Z\cap J)\cup (Z\cap R), Q}(d))>0$. 

Taking a plane $N_1$ containing the line $J$ and exactly one point of $R\cap S$ we get 
$\deg(\Res_{N_1}(Z\cap J)\cup (Z\cap R))\le 2d+2-(d+2)=d$, hence by Lemma \ref{obs1} we have
$$h^1(\Ii_{\Res_{N_1}(Z\cap J)\cup (Z\cap R)}(d-1))=0,$$
on the other hand,
by assuption \eqref{quasi-finito} we know that
$h^1(\Ii_{N_1\cap Z}(d))=0$
and by Lemma \ref{a9=} we get
$h^1(\Ii_{N_1\cap ((Z\cap J)\cup (Z\cap R))}(d))=0$.

 Hence
 from
the following residual exact sequence:
$$0\to \Ii_{\Res_{N_1}(Z\cap J)\cup(Z\cap R)}(d-1)\to \Ii_{(Z\cap J)\cup(Z\cap R)}(d)\to \Ii_{N_1\cap((Z\cap J)\cup(Z\cap R) ), N_1}(d)\to 0$$
we obtain
 $h^1(\Ii_{(Z\cap J)\cup (Z\cap R)}(d))=0$, which is a contradiction. This ends the proof.
\end{proof}

Notice that if $z\le 3d$, case (iii) of the previous theorem never occurs since $\langle Z\rangle =\PP^3$.

Thanks to Proposition \ref{sob1},
we can easily prove  Theorem 
\ref{ooo1} which states the emptyness of the minimal Terracini loci $\TT(3,d;x)'$ for
$0<2x\le 3d+1$. 

\begin{proof}[Proof of Theorem \ref{ooo1}]    
Consider $S\in \TT(3,d;x)'$ and let $Z$ be a critical scheme for $S$.
{By Lemma \ref{las1} we know that $Z_{\red}=S$ hence $\langle Z\rangle=\PP^3$.
Since $\deg(Z)\leq 2x\leq 3d+1$, we can apply Proposition \ref{sob1}.} 

{In any of the three cases there is a plane $H$ and a subset $S'=S\cap H$ which contradicts the minimality of $S$.}
\end{proof}

Now we will prove  Theorem \ref{n3.1}, which characterizes the elements of $\TT(3,d;1 +\lceil 3d/2\rceil)'$, i.e.\ {the sets of minimal cardinality which are} minimal Terracini with respect to $\Oo_{\PP^n}(d)$ in $\PP^3$.
Notice that one implication follows from
Theorem \ref{a9.0} (iii). 
By Proposition \ref{de2}, we also know that if $S$ is contained in a reducible rational normal curve, then $S\not\in \TT(3,d;1 +\lceil 3d/2\rceil)'$.

\begin{proof}[Proof of Theorem \ref{n3.1}]  
We only need to prove that any 
$S\in \TT(3,d;1 +\lceil 3d/2\rceil)'$  is contained in a rational
normal curve.

Given $d\ge 7$ and $x=1 +\lceil 3d/2\rceil$, we set
$\epsilon:=1$ if $d$ is even and $\epsilon:= 0$ if $d$ is odd. 
Given $S\in \TT(3,d;x)'$, let $Z$ be a critical scheme for $S$
and $z:= \deg (Z)$. Recall that $Z_{\red}=S$ and $z\le 3d+3-\epsilon$. 

Take a quadric $Q\in |\Oo_{\PP^3}(2)|$ such that $w:= \deg (Z\cap Q)$ is maximal. 

\smallskip

 \quad {\bf Step (a).}
In this step we want to prove that $Z\subset Q$.
Assume by contradiction that $Z\nsubseteq Q$. Since $h^0(\Oo_{\PP^3}(2)) =10$, $h^0(\Ii_A(2)) >0$ for every zero-dimensional scheme $A\subset \PP^3$ such that $\deg(A)\le 9$. Thus $w\ge 9$. By the minimality of $S$, we also have $h^1(\Ii_{Z\cap Q}(d))=0$,
hence $h^1(\Ii_{\Res_Q(Z)}(d-2))
>0$. 

Since $\deg(\Res_Q(Z))\le z-w\le 3(d-2)-\epsilon$, then Proposition \ref{sob1} implies that we are in one of the following cases:
\begin{itemize}
\item[(i)] there is a line $L$ such that $\deg (\Res_Q(Z)\cap L)\ge d$,
\item[(ii)]  there is a plane conic $D$ such that $\deg (\Res_Q(Z)\cap D)\ge 2d-2$;
\item[(iii)] $\epsilon =0$, $z=3d+3$, $w=9$ and $\Res_Q(Z)$ is the complete intersection of a plane cubic and a plane curve of degree $d-2$.
\end{itemize}

\quad (a1) First we exclude cases (ii) and (iii). Indeed, in both cases (ii) and (iii) there is a plane $U$ such that
$\deg (U\cap Z)\ge \deg(U\cap \Res_Q(Z)) \ge 2d-2$. Since $h^0(\Ii_U(2)) =4$, we have $w\ge \deg(U\cap Z)+3\ge 2d+1$ and hence we have
$\deg(\Res_Q(Z))=z-w\le 3d+3-(2d+1)<2d-2$, which is a contradiction.

\smallskip

\quad (a2) We assume now that we are in case (i), i.e.\ there is a line $L$ such that $$\deg(L\cap \Res_Q(Z))\ge d.$$

Note that, since $Z\not\subset Q$,
 there is a plane $H$ such that $L\subset H$ and $\deg (H\cap Z)\ge \deg(Z\cap R)+1\ge d+1$. We have $h^1(\Ii_{\Res_H(Z)}(d-1)) >0$, by the minimality of $S$, and $\deg (\Res_H(Z)) \le 3d+3-d-1=2d+2< 3(d-1)$. 
By applying Proposition \ref{sob1} to $\Res_H(Z)$, 
we are in one of the following cases:
\begin{itemize}
\item[(1)] there is a line $R$ such that $\deg (R\cap \Res_H(Z))\ge d+1$;
\item[(2)] there is a conic $D$ such that $\deg(D\cap \Res_H(Z)) \ge 2d$.
\end{itemize}

Now we consider separately these two possibilities (i1) and (i2).
\smallskip

\quad (a2.1) Assume we are in case (1), that is assume the existence of  a line $R$ such that $\deg (R\cap \Res_H(Z))\ge d+1$. The minimality of $S$ gives $\deg(R\cap Z)=d+1$ and $R\cap Z=R\cap \Res_H(Z)$.

Now we study the following cases: either $R=L$, or $R\ne L$ and $R\cap L\ne \emptyset$, or
 $R\cap L= \emptyset$.
 
\quad (a2.1.1) First assume $R=L\subset H$. 
Since $Z$ is critical, every connected component of $\Res_H(Z)$ supported at a point of $R$ is a simple point. Thus  we get
 $\#(S\cap R) \ge d+1$. Thus $h^1(\Ii_{2(S\cap R)}(d)) =h^1(\Ii_{2(S\cap R),R}(d)) >0$,   
 contradicting the minimality of $S$.

\quad (a2.1.2) Now assume $R\ne L$ and $R\cap L\ne \emptyset$. Consider the plane $M=\langle L\cup R\rangle $.  Since $\deg (L\cap R)=1$, then $\deg
(Z\cap M)\ge 2d$.  
Since $h^1(\Ii _{\Res_M(Z)}(d-1)) >0$ and $\deg (\Res_M(Z)) \le d+3$, there is a line $E$ such that $\deg
(E\cap \Res_M(Z)) \ge d+1$. As above we get $E\ne L$ and $E\ne R$. Take $Q'\in |\Ii_{E\cup L\cup R}(2)|$. Since $Z\nsubseteq
Q$ and $w$ is maximal,
{we have} $Z\nsubseteq Q'$. Hence $h^1(\Ii_{\Res_{Q'}(Z)}(d-2)) >0$ {and,} by Lemma \ref{obs1}, we have $\deg(\Res_{Q'}(Z))\ge d-1$.
Hence $z\ge (d-1)+\deg (Z\cap (L\cup R\cup E))=(d-1)+(2d+d+1-3)=4d-3$, a contradiction { since $d\ge7$.}

\quad (a2.1.3) Now assume $R\cap L=\emptyset$. Take  $Q''\in |\Ii_{R\cup L}(2)|$ such that $\deg (Z\cap Q'')$ is maximal. The maximality
of $w$ gives $Z\nsubseteq Q''$. Thus $h^1(\Ii_{\Res_{Q''}(Z)}(d-2)) >0$ and $\deg(\Res_{Q''})\le 3d+3-(d+1+d)=d+2\le 2(d-2)+1$. Hence there is a line $F$ such that $\deg (F\cap
\Res_{Q''}(Z))\ge d$.
{We conclude as in case (a2.1.2), using $L$, $R$ and $F$ instead of $L$, $R$ and $E$.}

\smallskip

\quad (a2.2) Assume that we are in case (2), that is there exists a conic $D$ such that $\deg(\Res_H(Z))\ge 2d$ and call $\langle D\rangle$ the plane spanned by $D$. Since $Z$ is minimally Terracini, $h^1(\Ii_{\langle D\rangle\cap Z}(d)) =0$ and hence $h^1(\Ii_{\Res_{\langle D\rangle}(Z)}(d-1)) >0$. Since $\deg(\Res_{\langle D\rangle}(Z)) \le d+3-\epsilon$,  Lemma \ref{obs1}
gives the existence of a line $R$ such that $\deg(R\cap \Res_{\langle D\rangle}(Z))\ge d+1$.  Thus we The minimality of $S$ implies $\deg(J\cap Z)=d+1$. The steps (a2.1.1), (a2.1.2)
and (a2.1.3) work verbatim taking $J$ instead of $R$.

\smallskip

 \quad {\bf Step (b).}
In step (a) we proved that $Z\subset Q$, hence we have $|\Ii_Z(2)|\ne \emptyset$.
In this step we prove that every quadric in $|\Ii_Z(2)|$ is integral.

{Assume by contradiction that $Z$ is contained in a quadric which is either not reduced, or reducible. We consider separately the two cases.}

\quad (b1) Assume first $Z\subset 2H$ where $H$ is a plane. Thus, since $S\subset Z$, we would have $S\subset H$, contradicting our definition of Terracini set.

\quad (b2) Assume now $Z\subset H\cup M$ where $H$ and $M$ are planes and $H\ne M$. With no loss of generality we may assume $\deg(Z\cap H)\ge \deg(Z\cap M)$.
The minimality of $S$ gives $h^1(\Ii_{\Res_H(Z)}(d-1)) >0$. Since $\deg(\Res_H(Z)) \le \lfloor z/2\rfloor <2(d-1)+1$, then Lemma \ref{obs1} implies that there is a line $L$ such that $\deg(L\cap \Res_H(Z)) \ge d+1$. 

Let $N$ be a general plane containing $L$. Since $\deg (\Res_{H\cup N}(Z)) \le z-d-1$, we have $h^1(\Ii_{\Res_{H\cup N}(Z)}(d-2)) =0$,
again by Lemma \ref{obs1}. The minimality of $S$ gives $Z\subset H\cup N$.
Taking different planes $N$ and $N'$ containing $L$, we get $S\subset (H\cup N)\cap (H\cup N')=H\cup L$. 
The minimality of $S$ implies $2\#(L\cap S)\le d+1$, i.e. $\#(S\cap L)\le \lfloor (d+1)/2\rfloor$. Since $\deg(\Res_H(Z)\cap L)) =d+1$, we get $d$ odd,  $H\cap Z\cap L=\emptyset$ and $\Res_H(Z) \subset L$. Since $\#(S\cap R)>1$ and $H\cap Z\cap L=\emptyset$, $L\nsubseteq H$.

Recall that $N$ is a general plane containing $L$. Again by the minimality of $S$ we have $h^1(\Ii_{\Res_N(Z)}(d-1)) >0$.
Since $\deg (\Res_N(Z)) \le 3d+3-d-1$, then Proposition \ref{sob1} implies that

\quad(I) either there is a line $R$ such that $\deg (R\cap \Res_N(Z))\ge d+1$, 

\quad(II) or there is a conic $D$ with $\deg (D\cap \Res_N(Z)) \ge 2d$.

{We analyse separately the two cases and we will show a contradiction in both cases.}

\quad (b2.1) Assume first the existence of {a conic} $D$ as in case (II). 

Since $|\Ii_D(2)|$ is globally generated and each connected component of $Z$ has degree at most $2$,
then $Q_1\cap Z=D\cap Z$ for a general $Q_1\in |\Ii_D(2)|$. Since $\deg(\Res_{N\cup Q_1}(Z)) \le z-(d+1)-2d\le 2$, we have $h^1(\Ii_{\Res_{N\cup Q_1}(Z)}(d-3)) =0$. The minimality of $S$ gives $Z\subset Q_1\cup N$. Since $N\cap Z\cap H=\emptyset$, and $Q_1\cap Z=D\cap Z$ and $Z\cap N=Z\cap L$, we get $Z\subset D\cup L$.

By the 
minimality of $S$ we have that $\#(S\cap D)\le d$ on the conic 
and $2\#(S\cap L)\le 
  d+2$ on the line, which implies
$\#(S\cap L)\le 
  \lfloor (d+1)/2\rfloor$.
  Then we would have
  $$1+\left\lfloor \frac{3d}2\right\rfloor=x\le d+ \left \lfloor \frac{d+1}2\right\rfloor$$
  which is false.

\smallskip

\quad (b2.2) Assume now the existence of a line $R$ as in case (I). 

Since $S$ is minimal, then $\deg(Z\cap R)=\deg(Z\cap L)=d+1$. Since $L\nsubseteq H$, and $S\subset H\cup L$,  we have $R\ne L$. Since $H\cap Z\cap S=\emptyset$,
we have $R\cap L\cap S=\emptyset$. Thus $\deg(Z\cap (R\cup L))=2d+2$. Since $\langle S\rangle =\PP^3$ and $Z$ is minimal, $R\cup L$ is not a conic, i.e. $R\cap L=\emptyset$.

Take a general $Q'\in |\Ii_{R\cup L}(2)|$.
Since $\Ii_{R\cup L}(2)$ is globally generated, then $Q'$ is smooth and $Z\cap Q' =Z\cap (R\cup L)$. Since $h^1(\Ii _{\Res_{Q'}}(d-2)) >0$ and $\deg (\Res_{Q'}(Z)) \le d+1$, Lemma \ref{obs1} implies that there is a line $E$ such that $\deg (E\cap Z)\ge d$.
Since $d$ is odd, we get $\#(E\cap S)=(d+1)/2$. Since $Z\subset H\cup L$ and $L\subset H$, then we get $L\cap E\ne \emptyset$. The conic $L\cup E$ contradicts the minimality of $S$.

\smallskip

 \quad {\bf Step (c).}
In steps (a) and (b) we proved that $Z$ in contained in a quadric $Q$ and that each quadric containing $Z$ is integral. 
Since $h^0(\Oo_{\PP^3}(2))=10$, for any degree $8$ scheme $W\subset Z$ we have $h^0(\Ii_W(2)) \ge 2$. Thus there is quadric $T\subset \PP^3$ such that $\deg (T\cap Z)\ge 8-\epsilon$ and $T\ne Q$.  In this step we prove that $Z\subset T$.

Assume {by contradiction} that  $Z\nsubseteq T$.
Since $\deg (\Res_T(Z)) \le 3(d-2) +1$,  the residual exact sequence of $T$ gives $h^1(\Ii_{\Res_T(Z)}(d-2)) >0$. 
First assume $\deg (\Res_T(Z)) =3d-5$ and that $\langle \Res_T(Z)\rangle$ is contained in a plane $M$. Since $Q$ is irreducible, $Q\cap M$
is a conic containing at least $\lceil (3d-5)/2\rceil$ points of $S$, contradicting the minimality of $S$. 

Since $\Res_T(Z)$ is not a scheme of degree $3d-5$ contained in a plane, then Proposition \ref{sob1}  implies that {we have the following cases:}

\quad($\alpha$) either there is a line $L_1$ such that $\deg (L_1\cap \Res_T(Z)) \ge d$, 

\quad($\beta$)
or there is a conic $D_1$ such that $\deg (D_1\cap \Res_T(Z)) \ge 2d-2$,

\quad($\gamma$)
or there is a plane cubic $C_1$ such that $\deg(C_1\cap \Res_T(Z)) \ge 3d-6$. 

\smallskip

{Now we analyse separatey the three cases an d we will get to a contradiction in any case.}

\quad (c1) Assume first the existence of the plane cubic $C_1$ as in case ($\gamma$). 


Since $Z$ is contained in an integral quadric $Q$, then we have $\langle C_1\rangle\not\subset Q$.
Then $\deg(C_1\cap Q)\le6$ and this gives a contradiction because $6<3d-6$.

\smallskip

\quad (c2) Assume now the existence of the conic $D_1$ as in case ($\beta$).

The scheme $\Res_{\langle D_1\rangle}(Z)$
has degree $\le d+5-\epsilon$
and $h^1(\Ii_{\Res_{\langle D_1\rangle}(Z)}(d-1)) >0$, because $Z$ is critical. 
Thus by Lemma \ref{obs1},
there is a line $L_2$ such that $\deg(\Res_{\langle D_1\rangle}(Z)\cap L_2)\ge d+1$. Take a general plane $M\supset L_2$. We have $\deg(\Res_{M\cup \langle D_1\rangle}(Z)\le 4-\epsilon$. The minimality of $S$ gives $Z\subset M\cup \langle D_1\rangle$.
Then we proved that $Z$ is contained in a reducible quadric, which is impossible by
 step (b).

\smallskip

\quad (c3) Assume finally  the existence of the line $L_1$ as in case ($\alpha$). 

B\'ezout's theorem gives $L_1\subset Q$. Take a general plane $U\supset L_1$. Since each connected component of $Z$ has degree $\le 2$, then $L_1\cap Z=U\cap Z$. Since $\deg (\Res_U(Z)) \le 2d+3-\epsilon$
and $d\ge 6$, by Proposition \ref{sob1} it follows that: either there is a line $L_3$ such that $\deg (\Res_U(Z)\cap L_3)\ge d+1$, or there is a conic $D_3$ such that $\deg (D_3\cap \Res_U(Z)) \ge 2d$. 

We can again exclude the existence of $D_3$ 
following the same argument used  in step (c2). 

Now assume that there exists $L_3$ such that $\deg (\Res_U(Z)\cap L_3)\ge d+1$.
In this case we have $\#(S\cap (L_1\cup L_3)) \ge \lceil d/2\rceil +\lceil (d+1)/2\rceil =d+1$; we also get that $d$ is odd. Since $S$ is minimal, then $L_1\cap L_3=\emptyset$.
Thus the integral quadric $Q$ is not a cone, i.e. $Q$ is smooth.

{Then following the same argument used in step (a2.1.3) we get a contradiction (note that both steps (a2.1.2) and (a2.1.3) do not use the assumption $Z\nsubseteq Q$ made in step (a)). }

\smallskip

 \quad {\bf Step (d).}
By the previous steps, we know that $Z$ is contained in no reducible quadric and in infinitely many integral quadrics. Moreover, every quadric containing a degree $8-\epsilon$ subscheme of $Z$ contains $Z$. 

Let $Q$ be a general element of $|\Ii_Z(2)|$. 

Since in every pencil of quadrics at least one is singular, we can assume that $T$ is  a quadric cone containing $Z$. Since $Q$ is general, we may take $T$ such that $T\ne Q$. 
Call $o$ its vertex. Every line $L$ such that $\deg (L\cap Z)\ge 3$ is contained in $T$ and any union of $2$ lines of $T$ is a reducible conic, because they contain $o$.

Set $E:= Q\cap T$ as a scheme-theoretic intersection. Since $Z\subset T$ and $Z\subset Q$, then $Z\subset E$. Since $E$ is the complete intersection of $2$ quadric surfaces, the adjunction formula gives $\omega _E\cong \Oo_E$. The Koszul complex of the equations of $Q$ and $T$
gives $h^0(\Oo_E)=1$. 
{Hence by duality we have $h^1(\Oo_E)=1$. }

First assume $E$ integral, i.e.\ $E$ is an irreducible quartic curve. Since the rank $1$ torsion free sheaf $\Ii_{Z,E}(d)$ has degree $4d-\deg(Z)>0$, then $h^1(E,\Ii_{Z,E}(d)) =0$. Since $E$ is arithmetically Cohen-Macaulay, $h^1(\Ii_Z(d)) =0$, which is a contradiction. 

Then we may assume that $E$ is not integral. If $E$ is not reduced, it may have multiple components, but no embedded point. If $E_{\red} \ne E$, then $E_{\red}$ is a reduced curve of degree $\le 3$ containing $S$. Since $h^0(\Oo_E)=1$,
$E_{\red}$ is connected, hence Proposition \ref{de2} gives a contradiction. 

Thus the curve $E=E_{\red}$ is reduced and reducible. Each irreducible component of $E$ is either a line, or a smooth conic, or a rational normal curve. 

First assume $E=E_1\cup E_2$ with $E_1$ and $E_2$ reduced conics. Since $Z$ is critical and $S$ is minimal, then $h^1(\Ii_{\Res_{\langle E_i\rangle}(Z)}(d-1)) >0$ for $i=1,2$, and hence we have
$\deg (Z\cap E_1) +\deg (Z\cap E_2)-\deg (Z\cap E_1\cap E_2)\ge (2d+2)+(2d+2)-4=4d$, {contradicts the assumption $z\le 3d+3$, since $d\ge4$.}

Thus $E$ has at most one smooth conic among its irreducible components and it is not formed by $4$ lines through $o$. Hence there is a connected degree three curve $C\subset E$, which is either a rational normal curve, or a reducible rational normal curve.

{We consider now the following two cases: either $Z\nsubseteq C$, or $Z\subseteq C$.}

\smallskip

\quad (d1) First we assume that $Z\nsubseteq C$. 
Since $\Ii_C(2)$ is globally generated and every connected component of $Z$ has degree $\le 2$, for a general $Q'\in |\Ii_C(2)|$ we have
$Q'\cap Z =C\cap Z$. 
Hence it follows that
$h^1(\Ii_{\Res_{Q'}(Z)}(d-2)) >0$.
We write $E =L_4\cup C$ with $L_4$ a line.
We have $\Res_{Q'}(Z)\subset L_4$ and $\deg (\Res_{Q'}(Z))\ge d$. Take a general plane $M\supset L_4$. 

Since $h^1(\Ii_{\Res_M(Z)}(d-1)) >0$ {by minimality of $S$,}
and $\deg (\Res_M(Z)) \le 2d+3-\epsilon\le 3d$, then by Proposition \ref{sob1} we have that:

\quad(d1.1)
either there is a line $L_5\subset C$ such that $\deg (L_5\cap \Res_M(Z)) \ge d+1$, and this is impossible  because $E$ would be a union of $2$ reduced conics;

\quad(d1.2)
or there is a conic $D_4$ such that $\deg(\Res_M(Z)\cap D_4)\ge 2d$, and also in this case $E$ would be a union of $2$ reduced conics.

In both cases we find a contradiction and this complete the case $Z\nsubseteq C$.

\smallskip

\quad (d2) Now we assume $Z\subset C$. By Proposition \ref{de2}, we obtain that $C$ is a rational normal curve and this ends the proof of the theorem.
\end{proof}

We are going finally to prove our last main result, which is Theorem \ref{ceo1}.
We point out  that the bound {in Theorem \ref{ceo1}} 
is sharp, as shown in the following example, 
which implies that
 $\TT(3,d;2d)'\ne \emptyset$ for all $d\ge 5$.

\begin{example}\label{ex4d}
Take $d\ge 5$. Let $C\subset \PP^3$ be a smooth linearly normal elliptic curve.
{Let $\Ll$ be a line bundle on $C$ such that
$\Ll^{\otimes 2}\cong \Oo_C(d)$. Since $\deg(\Ll)=2d$ and $C$ has genus 1, $\Ll$ is very ample. 
}

Fix any $S\subset |\Ll|$ formed by $2d$ points. We will show that $S\in \TT(3,d;2d)'$.
Obviously $\langle S\rangle =\PP^3$.
Since $2S\cap C \in |\Oo_C(d)|$, we have 
$h^i(\Ii_{2S\cap C,C}(d))  =1$, $i=0,1$.

The curve $C$ is the  smooth complete intersection of $2$ quadric surfaces, say $C=Q\cap Q'$. Clearly $Q$ and $Q'$ are smooth at each point of $S$ and 
 $\Res_{Q}(2S)=S$ and $\Res_{Q'}(2S\cap Q)=S$, hence the residual exact sequence with respect to $Q$ in $\PP^3$ and of $C$ in $Q$ gives:
\begin{equation}\label{eqex41}
0\to \Ii_S(d-2)\to \Ii_{2S}(d)\to \Ii_{2S\cap Q,Q}(d)\to  0,
\end{equation}
\begin{equation}\label{eqex42}
0\to \Ii_{S,Q}(d-2)\to \Ii_{2S\cap Q,Q}(d)\to \Ii_{2S\cap C,C}(d)\to  0.
\end{equation}

Since $d\ge 5$, we have $\#S =2d< 4d-8=\deg (\Oo_C(d-2))$. 
Thus $h^1(\Ii_{S,C}(d-2)) =0$. 
Since $C$ is arithmetically Cohen-Macaulay, we have $h^1(\Ii_S(d-2)) =0$,
and hence $h^1(\Ii_{S,Q}(d-2))=0$. 
Using \eqref{eqex42} and \eqref{eqex41}, we get $h^1(\Ii_{2S}(d)) =1$ and $h^0(\Ii_{2S}(d)) \ge 1$. 

Take now $S'\subsetneq S$. 
Since $\deg (2S'\cap C) <4d$, we have $h^1(\Ii_{(2S'\cap C,C}(d)) =0$.
Moreover $h^1(Q,\Ii_{S',Q}(d-2))=0$,
by {Lemma} \ref{a9=}.
Hence, using again \eqref{eqex42} and  \eqref{eqex41} (with $S'$ instead of $S$), 
we get $h^1(\Ii_{2S'}(d)) =0$. 

Thus $S\in \TT(3,d;2d)'$.
\end{example}

%

From the previous example we can deduce the following remark.
\begin{remark}\label{integral complete intersection}
Fix integers 
$x<2d$.  
Let $E\subset \PP^3$ be an integral complete intersection of two quadric surfaces.
Let $S$ be a collection of $x$ points on $E$, then $h^1(\Ii_{2S}(d))=0$.
\end{remark}

The following technical lemma generalizes Remark \ref{integral complete intersection}
to reducible quartic curves satisfying further suitable conditions.

\begin{lemma}\label{lemma-riducibile}
Fix  $d\ge5$.
Let $T \subset \PP^3$ be a reduced curve with $\deg(T)\le4$ and such that any irreducible component of $T$  is a line or a conic or a rational normal cubic. Assume also that no plane contains a subcurve of $T$ of degree $\ge 3$.
Let $S\subset T$ be a collection of points such that  $\#(S)\le 2d-1$ and
\begin{itemize}
\item 
$\#(S\cap L)\le
\lceil d/2\rceil$ for any line $L\subseteq T$
\item
$\#(S\cap C)\le d$ for any conic $C\subseteq T$
\item
$\#(S\cap D)\le (3d+1)/2$ for any  rational normal cubic $D\subseteq T$.
\end{itemize}
Let $Z\subset T$ be a zero-dimensional scheme such that $Z_{\red} =S$, any connected component of $Z$ has degree $\le2$, $Z$ is contained in an integral quadric surface and $Z$ is not contained in any reducible quadric.
Then $h^1(\Ii_{Z}(d))=0$.
\end{lemma}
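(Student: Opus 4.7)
The plan is a three-step reduction from $h^1(\Ii_Z(d))$ to a cohomology computation on the integral quadric $Q$ containing $Z$, then to the curve $T$ itself, and finally to a componentwise analysis. Since each connected component of $Z$ (whether a simple point or a length-two scheme) is contained in $Q$, the inclusion $\Ii_Q \subset \Ii_Z$ gives $(\Ii_Z : \Ii_Q) = \Oo_{\PP^3}$, hence $\Res_Q(Z) = \emptyset$. The residual exact sequence then collapses to
\[
0 \to \Oo_{\PP^3}(d-2) \to \Ii_Z(d) \to \Ii_{Z,Q}(d) \to 0,
\]
and the vanishing of $h^i(\Oo_{\PP^3}(d-2))$ for $i=1,2$ identifies $h^1(\Ii_Z(d))$ with $h^1(Q, \Ii_{Z,Q}(d))$.

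I would then split on whether $T \subset Q$. If some irreducible component $T_i \not\subset Q$, then B\'ezout gives $\deg(Z \cap T_i) \le 2 \deg T_i \le 6$, so the off-$Q$ portion of $Z$ has small total degree; combined with the bounds on $\#(S \cap T_i)$ and the hypothesis that $Z$ lies in no reducible quadric (which excludes degenerate configurations such as $T \subset H_1 \cup H_2$), Proposition~\ref{sob1} (or Remark~\ref{n2e1remark} when $Z$ is planar) forces either a line meeting $Z$ in at least $d+2$ points or an analogous configuration, each contradicting the hypotheses on $\#(S \cap L)$, $\#(S \cap C)$, or $\#(S \cap D)$. In the remaining case $T \subset Q$, write $T$ of bidegree $(a,b)$ on the smooth quadric $Q \cong \PP^1 \times \PP^1$ with $a+b = \deg T \le 4$ (the quadric cone case handled analogously using $\Pic(Q) = \ZZ$); then $h^i(\Oo_Q(d-a, d-b)) = 0$ for $i=1,2$ and $d \ge 5$, and the sequence
\[
0 \to \Oo_Q(d-a, d-b) \to \Ii_{Z,Q}(d) \to \Ii_{Z,T}(d) \to 0
\]
reduces the problem to showing $h^1(T, \Ii_{Z,T}(d)) = 0$.

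For the final step, each irreducible $T_i$ is a rational normal curve in its linear span, isomorphic to $\PP^1$, and the hypotheses translate into $\deg(Z \cap T_i) \le d \deg T_i + 1$, giving $h^1(T_i, \Ii_{Z \cap T_i, T_i}(d)) = 0$ on each component. A Mayer--Vietoris sequence along $T$ glues these vanishings across the intersection points $T_i \cap T_j$. The main obstacle is this glueing when $T$ is reducible: since the bounds are sharp (for instance, $\deg(Z \cap T_i) = d \deg T_i + 1$ is allowed), one has very little room to spare and must verify that the restriction maps $H^0(\Ii_{Z \cap T_i, T_i}(d)) \to H^0(\Oo_{T_i \cap T_j}(d))$ jointly surject onto the cokernel sheaf in each of the possible shapes of $T$ (a line, a conic, two lines, a conic plus a line, three lines in non-coplanar configurations, a twisted cubic, a twisted cubic plus a line, two meeting conics, a conic plus two lines, or four lines). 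The hypotheses that no plane contains a subcurve of $T$ of degree $\ge 3$ and that $Z$ lies in no reducible quadric are precisely what guarantee that $T$ avoids the degenerate configurations where this glueing would fail.
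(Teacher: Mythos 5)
Your overall skeleton is sound and parallels the paper's: reduce $h^1(\Ii_Z(d))$ to $h^1(T,\Ii_{Z,T}(d))$ (the paper does this in one stroke from $h^1(\Ii_T(t))=0$ for $t\ge 5$, rather than factoring through the quadric $Q$ as you do, which spares them your extra case $T\nsubseteq Q$ -- a case your sketch handles only vaguely, and where an appeal to Proposition \ref{sob1} is not automatic since $\deg(Z)$ can be as large as $4d-2>3d+1$). The componentwise vanishing $h^1(T_i,\Ii_{Z\cap T_i,T_i}(d))=0$ from $\deg(Z\cap T_i)\le d\deg(T_i)+1$ is correct.

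The genuine gap is the Mayer--Vietoris glueing, which you explicitly flag as ``the main obstacle'' and then dispose of by assertion. This step is not a routine verification: when $\deg(Z\cap T_i)=d\deg(T_i)+1$ one has $h^0(\Ii_{Z\cap T_i,T_i}(d))=0$, so that component contributes nothing to the surjection onto $H^0(\Ii_{Z\cap E,E}(d))$, and whether the remaining components pick up the slack depends delicately on whether the intersection points lie in $S$ and on how the point counts distribute over the components. For example, for $T=Y\cup L$ with $L$ a line meeting $Y$ at $p\notin S$, if both $h^0(\Ii_{Z\cap L,L}(d))$ and $h^0(\Ii_{Z\cap Y,Y}(d))$ vanish then the sequence forces $h^1(\Ii_{Z,T}(d))=1$; one must check that every such configuration is excluded by the inequality $\#S\le 2d-1$ together with the hypotheses on reducible quadrics and planar subcurves. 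The paper's proof of exactly this point (steps (a), (b), (b1), (b2.1), (b2.2)) occupies most of the argument and is not internal to $T$ at all: it repeatedly returns to $\PP^3$, chooses auxiliary planes and quadrics through the critical components, applies residual exact sequences, Proposition \ref{sob1} and B\'ezout, and uses the ``no reducible quadric'' hypothesis to kill specific configurations (e.g.\ two final lines each carrying $\lceil d/2\rceil$ points, or a conic carrying $d$ points attached to a non-final line). None of this is present in your proposal, so the decisive part of the lemma remains unproved.
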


\begin{proof}
Since $h^1(\Ii _T(t))=0$ for all $t\ge 5$, it is sufficient to prove that $h^1(\Ii_{Z,T}(d)) =0$. We already analized all cases with $\deg (T)\le 3$ and $T$ connected. Thus we may assume  that $T$ is connected and $\deg (T)=4$.

Consider a good ordering $T_1,\dots ,T_s$ of the irreducible components of $T$ and set $Y= T_1\cup \cdots \cup T_{s-1}$. The components $T_1$ and $T_s$ are final components and for every final component $T_i$ of $T$ there is a good ordering with $T_i$ as its first component. Thus, changing if necessary the good ordering, we may assume $\deg (T_1)\ge \deg(T_s)$. Thus $\deg (T_s) \le 2$
and $\deg (T_s) =2$ if and only if $s=2$ and $\deg (T_1)=2$. This case is excluded, because $T$ would be contained in a reducible quadric.

Hence $\deg(T_1)\ge \deg(T_s)=1$.
Set $E:=T_{s}\cap Y$ (scheme-theoretic intersection).  Since $T$ contains no plane subcurves of degree $\ge 3$, then 
we can assume, up to choosing a good ordering that $\deg (T_s\cap Y)\le 2$. 
 Set $e:= \#(S\cap E)$ and $z:= \deg (Z)\le 2(\#S)$. Note that $\#S =\#(S\cap T_s) +\#(S\cap Y)-e$.
We have the  following Mayer-Vietoris type sequence on $T$
 \begin{equation}\label{MV}
 0 \to \Ii_{Z,T}(d)\to \Ii_{Z\cap T_s,T_s}(d)\oplus \Ii_{Z\cap Y,Y}(d)\to \Ii_{Z\cap E, E}(d)\to 0.
 \end{equation}

\quad (a) Assume $\#(S\cap T_s)\le \lceil d/2\rceil -1$. Thus $h^1(\Ii_{E\cup (Z\cap T_s),T_s}(d)) =0$,
since $\deg(E\cup (Z\cap T_s))\le 2+2(\lceil d/2\rceil -1)$.
Then
 the restriction map $H^0(\Ii_{Z\cap T_s,T_s}(d))\to H^0(\Ii_{Z\cap E, E}(d))$ is surjective.
Thus the exact sequence \eqref{MV} gives $h^1(\Ii_{Z,T}(d)) =0$ and we conclude.

\quad (b) Assume $\#(S\cap T_s) =\lceil d/2\rceil$. If $S\cap Y\cap T_s= \emptyset$, then we have $\Ii_{Z\cap E, E}(d)=\Oo_E(d)$ and we conclude as in step (a). 
Thus from now on we assume $S\cap T_s\cap Y\ne \emptyset$. Let
$M$ be a plane containing $L_s$ such that $\deg (Z\cap M)$ is  maximal.

\quad (b1) Assume  that $M$ contains another irreducible component, $T_i$, of $T$. Since $T$ contains no planar subcurve of degree $\ge 3$, $\deg (T_i)=1$ and $T_i$
is unique in $M$. Since $T_s\cup T_i$ is a conic, $\#(S\cap (T_s\cup T_i)) \le d$. The closure $A$ of $T\setminus (T_s\cup T_i)$ is either a reduced conic or the union of $2$ disjoint lines. The first case is excluded, because
$T$ is not contained in a reducible quadric. Now assume that $A$ is the union of $2$ disjoint lines, say $A=L\cup R$. The lines $L$ and $R$ are final components of $T$. By step (a) we may assume $\#(S\cap L) =\#(S\cap R) =\lceil d/2\rceil$.
Thus 
$L\cap T_s =L\cap R=\emptyset$. Let $Q$ be the unique quadric containing $L\cup R\cup T_s$. Since $L\cap R=\emptyset$, $Q$ is a smooth quadric. Changing if necessary the names of the $2$ rulings of $Q$ we may assume
$L\cup R\cup T_s\in |\Oo_Q(3,0)|$. Since $T_i$ meets each connected component of $L\cup R\cup T_s$, B\'ezout's theorem gives $T_i\subset Q$ and $T_i\in |\Oo_Q(0,1)|$. Let $Z'\subset Q$ be the residual of $Z$ with respect to the divisor $L\cup R\cup T_s$.
It is sufficient to prove that $h^1(Q,\Ii _{Z'}(d-3,d)) =0$. Since $T_i\cup T_s$ is a reducible conic, $\#(S\cap T_i\cup T_s)\le d$ 
and hence $\#(S\cap T_i) \le d-\lceil d/2\rceil$ with strict inequality if $S\cap T_i\cap T_s\ne \emptyset$.
Thus $\deg (Z') \le 4d-2 -6\lceil d/2\rceil \le d-2$ and hence $h^1(\Ii_{Z',Q}(d-3,d)) =0$, {and we conclude that $h^1(\Ii_{Z,T}(d))=0$.}

\quad (b2) Assume that $T_s$ is the unique connected component of $T$ contained in $M$. Thus $\deg ((Y\cap (M\setminus T_s)) \le 3$. Hence $h^1(\Ii_{Z\cap M}(d)) =0$. By the residual exact sequence with respect to $M$ it is sufficient to prove that
$h^1(\Ii_{\Res_M(Z)}(d-1)) =0$. Assume by contradiction that $h^1(\Ii_{\Res_M(Z)}(d-1)) >0$. Since $\deg (M\cap Z)>\deg (Z\cap T_s)$, we have $\deg (\Res_M(Z)) \le 4d-2-2\lceil d/2\rceil -1 \le 3(d-1)$. Since $T$ contains no plane curve of degree $\ge 3$,  Proposition \ref{sob1} gives that either there is a line $L_1$ such that $\deg (L_1\cap \Res_M(Z)) \ge d+1$ or there is a conic $D_1$ such that $\deg (D_1\cap \Res_M(Z)) \ge 2d$. 

\quad (b2.1) Assume first the existence of the line $L_1$. Since $\#(S\cap L_1)\le \lceil d/2\rceil$, we get $d$ odd and $\deg (Z\cap L_1)= d+1$. Since $\#(S\cap J)\le d$ for all conics $J\subset T$ and $d$ is odd, $L_1\cap T_s=\emptyset$. Let $A_1$ denote the closure of $T\setminus (L_1\cup T_s)$. Either $A_1$ is a reduced conic or it is the union of $2$ disjoint lines. We have $\#(S\cap (T\setminus (T_s\cup L_1)))\le d-2$. There is an integral quadric $Q$ containing $T_s\cup L_1$ and at least one point of $S\cap (T\setminus T_s\cup R_1))$ for each component of $A_1$. Thus $h^1(\Ii_{\Res_Q(Z)}(d-2)) =0$. Thus it is sufficient to prove that $h^1(\Ii_{Z\cap Q,Q}(d)) =0$. Since $L_1\cap T_s=\emptyset$, $Q$ is a smooth quadric. We get $h^1(\Ii_{Z\cap Q,Q}(d))=0$,
unless $Q$ contains another irreducible component of $T$. First assume $A_1\subset Q$. Since $Q$ is a smooth quadric, we get (for a suitable choice of the $2$ rulings of $Q$) that either $T\in |\Oo_Q(4,0)|$ (excluded, because $T$ is reduced and connected)  or $T\in |\Oo_Q(3,1)|$ or $T\in |\Oo_Q(2,2)|$, which are also excluded.
Now assume that $Q$ only contains one component, $R$, of $A_1$. Write $A_1=R\cup R_2$ and $A_2:= L_1\cup T_s\cup R$. Either $A_2\in |\Oo_Q(3,0)|$ or $A_2\in |\Oo_Q(2,1)|$. In both cases we get $h^1(Q,\Ii_{Z\cap A_2,Q}(d,d)) =0$. To conclude
the proof we need to consider $R_2\cap Z\cap Q$. We have $\deg (R_2\cap Z\cap Q) \le 4$ 
and hence $h^1(Q,\Ii_{Z\cap Q,Q}(d,d)) =0$.

\quad (b2.2) Assume the existence of the conic $D_1$. Since $\#(S\cap D_1)\le d$, we get $\#(S\cap D_1) =d$ and hence $\deg (Z\cap D_1) =2d$. By step (b1) we may assume that if $D_1$ is reducible, then none of its component contains $\lceil d/2\rceil$ points of $S$. We get $T=D_1\cup R\cup T_s$ with $R$ a line and $\#(S\cap (T\setminus D_1\cup T_s)) \le d-1-\lceil d/2\rceil$. If $R$ is a final component of $T$, then we use step (a) and that $\#(R\cap S)<\lceil d/2\rceil$.
Now assume that $R$ is not a final component of $T$. Assume for the moment $T_s\cap D_1\ne \emptyset$. Since $T$ contains no degree $3$ planar subcurve,  $D_1\cup T_s$ is a reducible rational normal curve and we may find a quadric $Q_1$ containing $D_1\cup T_s$, but not $R$. To conclude in this case we need $\deg (\Res_{Q_1}(Z)) \le d-1$. We have $\#(S\cap R) \le 2d-1 - d-\lceil d/2\rceil$, and we can conclude.
Now assume $D_1\cap T_s=\emptyset$. Since $T$ is connected, $R$ meet $T_s$ and $D_1$ at a different point. In this case $T$ is contained in the reducible quadric $\langle R\cup T_s\rangle \cup \langle D_1\rangle$, a contradiction. \end{proof}

{We give now the proof of Theorem \ref{ceo1}, which states that $\TT(3,d;x)'$ is empty if
  $1+\lceil {3} d/2\rceil < x < 2d$.}
\begin{proof}[Proof of Theorem \ref{ceo1}:]
Assume {by contradiction} the existence of $S\in \TT(3,d;x)'$ and fix a critical scheme $Z$ of $S$. Set $z:= \deg (Z)\le 4d-2$.

 Set $Z_0=Z$. For any $i>0$,
let $Q_{i}$ be a quadric surface such that $z_{i}:=\deg(Z_{i-1}\cap Q_i)$ is maximal and set $Z_i:= \Res_{Q_i}(Z_{i-1})$. 
The sequence $\{z_i\}_{i\ge 1}$ is weakly decreasing.  Let $e$ be the maximal $i$ such that $z_i\neq0$. Then $z=z_1+\cdots +z_e$ and $Z_e=\emptyset$. 
Since $h^0(\Oo_{\PP^3}(2)) =10$, $z_i\ge 9$ for all $i<e$, hence we have $e\le (4d+6)/9$, for
$z\le 4d-2$. 
By Lemma \ref{a43}, since $Z$ is critical and  $S\in \TT(3,d;x)'$,  we have 
$h^1(\Ii
_{Z_{e-1}}(d-2e+2)) >0$. 
%

\quad  (I) Assume first $e\ge 2$, i.e.\ $Z$ is not contained in any quadric surface.
Since $h^1(\Ii_{Z_{e-1}}(d-2e+2))
>0$, then Proposition \ref{sob1} implies that either $z_{e}\ge 3(d-2e+2)+1$ 
or there is a line
$L$ such that
$\deg (Z_{e-1}\cap L)\ge d-2e+4$ or there is a plane conic $D$ such that $\deg (Z_{e-1}\cap D)\ge 2d-4e+6$.

%
%
%

\quad  (I.a)
First assume $z_e\ge 3(d-2e+2)+1$.  Since the sequence $z_i$ is weakly decreasing, we get $z\ge e(3d-6e+6)$. 
It is easy to check that $e(3d-6e+6)> 4d-2$ for any $d\ge 13$ and $2\le e \le (4d+6)/9$. This contradicts our hypotesis.


\quad  (I.b)
Now assume the existence of a plane conic $D$ such that $\deg (Z_{e-1}\cap D)\ge 2d-4e+6$. 
Since $h^0(\Ii _D(2)) =5$, we get
$z_i\ge (2d-4e+6)+4$ for all $i<e$. 
Thus 
$z\ge e(2d-4e+10)-4$.
It is easy to check that 
$e(2d-4e+10)-4> 4d-2$ for any $2\le e \le (4d+6)/9$, and this gives again a contradiction. 


\quad  (I.c)
Finally assume the existence of a line
$L$ such that
$\deg (Z_{e-1}\cap L)\ge d-2e+4$. Since $h^0(\Ii_L(2)) = 7$, we have $z_i\ge (d-2e+4)+6$  for all $i<e$. 
Hence $z\le
e(d-2e+10)-6$. 
%
%
It is easy to check that 
$e(2d-4e+11)-6> 4d-2$ for any $4\le e \le (4d+6)/9$. Hence we get $e\in \{2,3\}$.

Let $H$ be a general plane containing $L$. Since each
connected component of $Z$ has degree $\le 2$, we may assume $Z\cap L =Z\cap H$.

\quad  (I.c1)
First assume
$e=3$. Since
$z_1\ge z_2\ge z_3\ge d-2$ and $z_1+z_2\ge \lceil 2z/3\rceil$, we have $\deg (\Res_{Q_1\cup Q_2\cup H}(Z)) \le z-\lceil 2z/3\rceil-(d-2)= \lfloor
z/3\rfloor -d+2 < d-3=(d-5)+2$, since $d\ge7$.
 Since $S$ is minimally Terracini, we get $Z\subset Q_1\cup Q_2\cup L$. Since $e>2$ and $H$ is
contained in a quadric surface, $Z\nsubseteq Q_1\cup H$. 
Since $S$ is minimally Terracini, $h^1(\Ii _{\Res_{Q_1\cup H}(Z)}(d-3))
>0$. 
%
%
%
We have:
$\deg (\Res_{Q_1\cup H}(Z)) 
\le (z-z_1)-(d-2)\le$ 
$z-\lceil z/3\rceil-d+2\le
\frac{5d+2}{3}\le 
2(d-3)+1$, for $d\ge17$.
Hence there is a line $R$ such that $\deg (R\cap \Res_{Q_1\cup H}(Z))\ge
d-1$.
Taking a general plane containing $R$ and taking again the residual, we get $Z\subset Q_1\cup L\cup R$. 
But since $h^0(\Ii_{R\cup L}(2)) >0$ and $e\le 2$, we have a
contradiction.

\quad  (I.c2)
Now assume $e=2$ and hence $z_1\ge\lceil z/2\rceil$. 
%
%
%
%
%
%
%
We have
$\deg (\Res_H(Z))
\le z-d$ and
$h^1(\Ii _{\Res_H(Z)}(d-1)) >0$. 

First assume $\langle \Res_H(Z)\rangle =\PP^3$. Since $z-d \le 3(d-1)+1$, Proposition
\ref{sob1} implies that either there is a plane cubic $T_3$ with $T_3\cap \Res_H(Z)$ the complete intersection of $T_3$ and a
degree
$d-1$ plane curve or there is a conic $T_2$ such that $\deg (T_2\cap \Res_H(Z))\ge 2d$ or there is a line $T_1$ such that $\deg
(T_1\cap\Res_H(Z))\ge d+1$. 

First assume the existence of $T_3$. Since $\deg (\Res_{H\cup \langle T_3\rangle}(Z))\le 1$, {by minimality of $S$} we
 get $Z\subset H\cup \langle T_3\rangle$, contradicting the assumption $e>1$. 
 
 Assume the existence of $T_2$.
Since $\deg (\Res_{H\cup \langle T_2\rangle}(Z))\le z-3d\le d-1$, we get $Z\subset H\cup \langle T_2\rangle$, again a contradiction.

Now assume the existence of $T_1$. Take a general quadric $U\in|\Ii_{L\cup {T_1}}(2)|$. Since $\deg (\Res_U(Z))\le z-2d-1\le 2(d-2)+1$, by Lemma \ref{obs1}
there is a line $R_1$ such that $\deg (R_1\cap \Res_U(Z))\ge d$. Take a general $U'\in |\Ii_{L\cup T_1\cup R_1}(2)|$. Since
$\deg (\Res_{U'}(Z)) \le z-3d-1<d$ and $S$ is minimally Terracini, $Z\subset U'$, contradicting the assumption $e>1$.

Now assume $\dim \langle \Res_H(Z)\rangle \le 2$. 
 The only new case is if $\deg (\Res_H(Z)) =3d-2$ and $\Res_H(Z)$ is contained
in a plane cubic $C$. Since $\deg (\Res_{\langle C\rangle}(Z)) \le d$, $S$ is not minimally Terracini.

\quad (II). Assume now $e=1$, that is $Z$ is contained in a quadric $Q$.

If $Q$ is reducible 
we argue as in step (b) of the proof of Theorem \ref{n3.1} and we get a contradiction.
So we can assume  that {$Z$ is not contained in any reducible quadric. In particular $Q$ is irreducible and reduced.}

Set $W_0:= Z$. {Take $D_1\in |\Oo_Q(2)|$, such that
$w_1=\deg (W_0\cap D_1)$
is maximal and set $W_1:= \Res_{D_1}(W_0)$.
For $i\ge 2$, we iterate the construction: choose
divisors $D_i\in |\Oo_Q(2)|$ such that 
$w_{i}:=\deg(W_{i-1}\cap D_i)$ is maximal
 and set $W_i:= \Res_{D_i}(W_{i-1})$. }
The sequence $\{w_i\}_{i\ge 1}$, is weakly decreasing.  
Let $c\ge1$ be the maximal $i$ such that $w_i\neq0$, i.e.\  $W_c=\emptyset$ and and $z= w_1+\ldots +w_c$.

By Lemma \ref{a43}, since $Z$ is critical for  $S$ minimal,  we have 
 $h^1(\Ii_{W_{c-1}}(d-2c+2)) >0$. 
Since $\dim |\Oo_Q(2)|=8$, if $w_i\le 7$, then $w_{i+1} =0$ and $W_{i+1}=\emptyset$. Thus $w_i\ge 8$ for $1\le i<c$, hence
we get $c \le \frac{4d+5}{8}$, since $z\le 4d-2$.



\quad  {(II.a)}
If $c=1$, then we have $Z\subset D_1=Q\cap Q'$ where $Q'$ is an integral quadric. Hence $D_1$ is a complete intersection of two quadrics.
If $D_1$ is integral, then  by Remark \ref{integral complete intersection} we have
 $h^1(\Ii_Z(d)) =0$, a contradiction. 
If $D_1$ is reducible we have again a contradiction by Lemma \ref{lemma-riducibile} and by the minimality of $S$.

\quad  {(II.b)}
Now we assume $c=\lceil d/2\rceil $. 
Hence either $d$ is even and $h^1(\Ii_{W_{c-1}}(2)) >0$, {or $d$ is odd and $h^1(\Ii_{W_{c-1}}(1)) >0$. }


{First assume $d$ odd and $c=\lceil d/2\rceil$. Then we have
$8(\lceil d/2\rceil -1) +\deg (W_{c-1})\le 4d-2$, then $\deg (W_{c-1}) \le 2$, which is a contradiction. 
}
Now assume $d$ even and $c=d/2$.
Since $8(d/2 -1) +\deg (W_{c-1})\le 4d-2$, then $\deg (W_{c-1}) \le 6$. 
Thus either there is a line $L$ such that
$\deg (W_{c-1}\cap L) \ge 4$ or $\deg (W_{c-1}) =6$ and $W_{c-1}$ is contained in a conic $D$. 

First assume the existence of the line $L$ such that $\deg ((W_{c-1})\cap L)\ge 4$. B\'ezout's theorem implies $L\subset Q$. Since $h^0(\Ii_{L,Q}(2)) =6$, the maximality of the integer $w_{c-1}$ implies
$w_{c-1}\ge w_c+5\ge 9$. Thus $4d-2\ge (d/2-1)9+4$, a contradiction, since $d\ge7$.

Now assume $\deg (W_{c-1}) =6$ and that $W_{c-1}$ is contained in a conic $D$.  If $D$ is reducible we may assume that no irreducible component $J$ of $D$ satisfied $\deg (J\cap W_{c-1}) \ge 4$.
With these assumptions B\'ezout's theorem implies $D\subset Q$. Since $h^0(\Ii_{D,Q}(2)) =4$, the maximality of the integer $w_{c-1}$ gives $w_{c-1} \ge w_c+3=9$, which leads again to a contradiction.

\quad  {(II.c)}
{Now we may assume $2\le c<d/2$.}

Assume for the moment $w_c\ge 3(d-2c+2)$.  
Since the sequence $\{w_i\}$ is weakly decreasing, $4d-2\ge z\ge 3c(d-2c+2)$. Since $c<d/2$, we get $c=1$ a contradiction.

Now assume $w_c<3(d-2c+2)$. By applying Proposition \ref{sob1} we know that either there is a conic $D$ such that $\deg (D\cap W_{c-1}) \ge 2(d-2c+2) +2=2d-4c+6$, or there is a line $L$ such that $\deg (L\cap W_{c-1})\ge d-2c+4$. 

\quad (II.c1) In the first case, since $h^0(\Ii_{D,Q}(2)) =4$, 
we have $w_i\ge (2d-4c+6)+3$ for all $i<c$. 
Hence $z\ge c(2d-4c+9)-3$. 
Since $z\le 4d-2$, then we have again a contradiction.

\quad (II.c2) Assume now the existence of $L$. Since $h^0(\Ii_{L,Q}(2)) =6$, we get $w_i\ge (d-2c+4)+5$ for all $i<c$. Thus $z\ge c(d-2c+9)-5$. 
It is easy to check that
 $2\le c\le 3$, hence
 $\deg (L\cap Z)\ge d-2$. 
 
 Take a quadric $U\in |\Oo_Q(2))|$ containing $L$ and such that
 $\deg (Z\cap U)$ is maximal. 
 Since $h^0(\Ii_{L,Q}(2)) =6$, we have $\deg (L\cap U)\ge (d-2)+5=d+3$. Thus $\deg (\Res_U(Z)) \le 4d-2-d-3  =3(d-2) +1$. By Proposition \ref{sob1} either there is a plane cubic $E$ such that $\deg (E\cap \Res_U(Z))\ge 3(d-2)$
or there is a conic $F$ such that $\deg (\Res_U(Z)\cap F) \ge 2d-2$ or there is a line $R$ such that $\deg (\Res_U(Z)\cap R)\ge d$. In all cases (since $d\ge 5$) B\'ezout's theorem implies that $R$, $F$ and $E$ are contained in $Q$ (or at least all the components supporting
$Z$). Since $Q$ is an integral quadric, we exclude the plane cubic $E$. 

\quad (II.c2.1) Assume the existence of a conic $F$. Even if $Q$ is not assumed to be smooth, $F$ is a plane section of $Q$ 
and $F\cup L$ is a reducible rational normal curve. 

Thus $Z\nsubseteq F\cup L$. 

Since $\Ii_{F\cup L}(2)$
is globally generated, a general $Q'\in |\Ii_{F\cup L}(2)|$ has $Q'\cap Z =(F\cup L)\cap Z$ and hence $\Res_{Q'}(Z)\ne \emptyset$. Since $h^1(\Ii_{\Res_{Q'}(Z)}(d-2)) >0$ and $\deg (\Res_{Q'}(Z)) \le 4d-2-3d+4$ and $d\ge 7$, there is a line $R'$
such that $\deg (\Res_{Q'}(Z)\cap R') \ge d$. Since $\Ii_{F\cup L\cup R'}(t)$ is globally generated for, say, $t=4$, we get $Z\subset F\cup L\cup R'$.
{Hence we conclude by Lemma \ref{lemma-riducibile}.}

\quad (II.c2.2) Assume  finally the existence of the line $R$. Since each connected component of $Z$ has degree $\le 2$ and no line contains $d-2$ points of $S$, 
$R\ne L$.

\quad (II.c2.2.1) First assume $R\cap L\ne \emptyset$. Thus $H:= \langle R\cup L\rangle$ is a plane. Since $\deg (\Res_H(Z))\le 4d-2-2d+2$ and $h^1(\Ii_{\Res_H(Z)}(d-1)) >0$ either $\deg (\Res_H(Z))=2d$ and $\Res_H(Z)$ is contained in a conic $F_1$
or there is a line $R_1$ such that $\deg (R_1\cap \Res_H(Z))\ge 2$. In the first case we get $Z\subset L\cup R\cup F_1$ 
{and we conclude by Lemma \ref{lemma-riducibile}.}

\quad (II.c2.2.2) Now assume $R\cap L=\emptyset$. Take a general $Q_1\in |\Ii_{R\cup L}(2)|$. Thus $Q_1\cap Z =(R\cup L)\cap Z$. We get $h^1(\Ii_{\Res_{Q_1}(Z)}(d-2)) >0$ with $\deg (\Res_{Q_1}(Z)) \le 2d$. We get that either there is a conic $F_2$
with $\deg (F_2\cap \Res_{Q_1}(Z)) \ge 2d-2$ or a line $R_2$ such that $\deg (R_2\cap \Res_{Q_1}(Z))\ge d$. If $F_2$ exist, we get $Z\subset R\cup L\cup F_2$ {and we use Lemma \ref{lemma-riducibile}.}
If $R_2$ exists, we take a general $U_1\in |\Ii_{R\cup L\cup R_2}(3)|$ and get that $Z$ is contained in the union of $4$ lines. {Hence we conclude again by Lemma \ref{lemma-riducibile}.}
\end{proof}


\begin{thebibliography}{99}

%
%
\bibitem{ah3} J. Alexander and A. Hirschowitz, Polynomial interpolation in several variables, J. Alg. Geom. 4 (1995), 201--222.
%
%
%

\bibitem{bbs} E. Ballico, A.  Bernardi and P. Santarsiero, Terracini loci for $3$ points on a Segre variety, Asian Journal of Mathematics,
{Vol. 27, No. 3, pp. 375–404, June 2023.}

\bibitem{BC21} 
E. Ballico and L. Chiantini, 
On the Terracini locus of projective varieties. 
Milan J. Math. 89 (2021), No. 1, 1-17. 

\bibitem{BC2} 
E. Ballico and L. Chiantini, 
Terracini loci of curves, arXiv:2305.02035
to appear in Revista Matematica Complutense.

\bibitem{BV} 
E. Ballico and E. Ventura, A note on very ample Terracini loci,
arXiv:2303.12509
 {\it Rend. Circ. Mat. Palermo}
72 (2023) fascicolo 8,  4115--4118

\bibitem{BCCGO}
A. Bernardi, E. Carlini, M. Catalisano, A. Gimigliano, and A. Oneto. The Hitchhiker Guide to: Secant Varieties and Tensor Decomposition. Mathematics, 6(12):314, 2018. 


\bibitem{bgi} A. Bernardi, A. Gimigliano, M. Id\`{a},
Computing symmetric rank for symmetric tensors,
{\it J. Symbolic. Comput.} 46 (2011), 34--55. 

\bibitem{c0} K. Chandler, Hilbert functions of dots in linearly general positions, in: Zero-dimensional schemes (Ravello
1992), de Gruyter, Berlin (1994), pp. 65--79. 

 \bibitem{c} K. Chandler, A brief proof of a maximal rank theorem for generic 2-points in projective
space. Trans. Amer. Math. Soc. 353 (2000), no. 5, 1907--1920. 

\bibitem{CG} L. Chiantini, F. Gesmundo, Decompositions and Terracini loci of cubic forms of low rank, arXiv:2302.03715.

%
%
 

\bibitem{ep} Ph. Ellia and  Ch. Peskine, Groupes de points de ${\bf {P}}^2$: caract\`{e}re et position uniforme, in: Algebraic geometry (L' Aquila, 1988), 111--116,
Lecture Notes in Math., 1417, Springer, Berlin, 1990. 


%




\bibitem{l} J. M. Landsberg, Tensors: Geometry and Applications.
Graduate Studies in Mathematics, Vol. 128, Amer. Math. Soc. Providence, 2012. 
%
%


\bibitem{Terracini}
 A. Terracini, Sulle $Vk$ per cui la varietà degli $Sh$ $h+1$-secanti
ha dimensione minore dell’ordinario, Rend. Circ. Mat. Palermo
31 (1911), 392-396.

\end{thebibliography}
\end{document}